\providecommand{\U}[1]{\protect\rule{.1in}{.1in}}
\providecommand{\U}[1]{\protect\rule{.1in}{.1in}}
\newtheorem{theorem}{Theorem}
\newtheorem{lemma}{Lemma}
\newtheorem{proposition}{Proposition}
\newenvironment{proof}{\paragraph{Proof}}{\hfill$\square$}
\newtheorem{definition}{Definition}
\numberwithin{equation}{section}
\numberwithin{table}{section}
\numberwithin{figure}{section}
\begin{document}



\begin{center}
{\Large
\textbf{Stochastic games of parental vaccination decision making and bounded rationality}
} \newline
\\
Andras Balogh \& Tamer Oraby$^{*}$
\\
School of Mathematical and Statistical Sciences, The University of Texas Rio Grande Valley, \\
1201 W. University Dr., Edinburg, Texas, USA.
\end{center}
$^*$Corresponding author: \\
 E-mail: tamer.oraby@utrgv.edu\\
 Phone: +1-956-665-3536

\section*{Abstract}
Vaccination is an effective strategy to prevent the spread of diseases. However, hesitancy and rejection of vaccines, particularly in childhood immunizations, pose challenges to vaccination efforts. In that case, according to rational decision-making and classical utility theory, parents weigh the costs of vaccination against the costs of not vaccinating their children. Social norms influence these parental decision-making outcomes, deviating their decisions from rationality. Additionally, variability in values of utilities stemming from stochasticity in parents' perceptions over time can lead to further deviations from rationality. In this paper, we employ independent white noises to represent stochastic fluctuations in parental perceptions of utility functions of the decisions over time, as well as in the disease transmission rates. This approach leads to a system of stochastic differential equations of a susceptible-infected-recovered (SIR) model coupled with a stochastic replicator equation. We explore the dynamics of these equations and identify new behaviors emerging from stochastic influences. Interestingly, incorporating stochasticity into the utility functions for vaccination and nonvaccination leads to a decision-making model that reflects the bounded rationality of humans. Noise, like social norms, is a two-sided sword that depends on the degree of bounded rationality of each group. We also perform a stochastic optimal control as a discount to the cost of vaccination to counteract bounded rationality.

{\bf keywords:} Replicator Dynamics, Stochastic Differential Equations, Game Theory, Disease Models.

\section{Introduction}\label{Sec:Intro}
In disease modeling, applying game theory to understand \emph{human behavior} about mitigation choices offers crucial insights into the dynamics of epidemics. Conventional methods often presuppose rational decision-making; however, as the COVID-19 pandemic has demonstrated, human behavior can sometimes hinder public health initiatives to control the spread of the disease. Individual reactions to pharmaceutical interventions such as vaccination and non-pharmaceutical measures, including mask use, are shaped by a complex mixture of factors that include social norms, misinformation, and trust in authorities \cite{bish2010demographic}.

Moreover, integrating concepts from behavioral economics and psychology, such as bounded rationality and prospect theory, adds realism to these models. These theories recognize that individuals do not always process information flawlessly or make decisions based solely on maximum utility \cite{tversky1974judgment}. Instead, decisions are influenced by limited cognitive resources, biases, and how information is framed.

Kermack and McKendrick (1927) initially laid the groundwork for deterministic models of epidemics, which have since evolved to include stochastic components \cite{kermack1927contribution}. In \cite{bailey1975mathematical}, a pioneer in stochastic disease modeling was introduced using stochastic processes like birth-and-death processes. In that study, the authors introduced stochastic elements into mathematical epidemiology, making it possible to model the inherent randomness of disease transmission and study extinction thresholds to better understand and predict the dynamics of infectious diseases. Several researchers have studied stochastic models of the spread of diseases in the form of stochastic differential equations (SDEs). SDEs incorporate randomness directly into the model, representing various sources of uncertainty and variability in disease transmission and progression. These models have been particularly beneficial in capturing the random nature of contact patterns and the influence of random events on the spread of infectious diseases. Incorporating SDEs into epidemic modeling provides a more comprehensive understanding of how diseases spread and how they can be controlled under uncertainty.  

Allen (1994) provided foundational work on stochastic epidemic models, demonstrating how SDEs can model demographic stochasticity and environmental variability in disease dynamics \cite{allen1994some}. Gray et al. (2011) extended this approach to the SIS model, exploring the existence of a stationary distribution and derived expressions for its mean and variance \cite{gray2011stochastic}. Tornatore et al. (2005) investigated the stability of a stochastic SIR system, providing insights into the conditions under which disease extinction or persistence occurs \cite{tornatore2005stability}. Furthermore, Gao et al. (2019) studied a delay differential equation SIS model with a general non-linear incidence rate, highlighting the impact of delays and non-linearities in disease transmission \cite{Gao2019}. Cao et al. (2017) focused on the dynamics of an epidemic model with vaccination in a noisy environment, emphasizing the role of stochastic effects in vaccination strategies \cite{Cao2017}. Liu (2023) modeled multiple transmission pathways in a scenario of waterborne pathogens using a stochastic model, illustrating the complexity and variability in disease transmission routes \cite{Liu2023}. See also \cite{allen2008introduction} for an introduction to stochastic epidemic models.

Researchers have continued to advance the field by exploring the effects of multiple noise sources on infectious disease models. For example, Jian et al. (2024) examined the impact of multisource noise on disease dynamics, offering insights into how different environmental randomness can influence disease spread \cite{Jian2024}. Babaei et al. (2023) applied SDEs to model the COVID-19 pandemic, integrating stochastic elements into SEIAQHR models to capture the variability and uncertainty of real-world scenarios \cite{Babaei2021}. Similarly, Iddrisu et al. (2023) highlighted the critical role of stochasticity in understanding cholera epidemiology, demonstrating how SDEs can provide more accurate predictions and inform public health interventions \cite{hindawi2023cholera}. These contributions underscore the importance of stochastic models in epidemiology, as they provide more realistic and flexible frameworks for predicting disease outcomes and formulating effective public health policies.

Rational decision-making theories often disregard information processing constraints, presuming that rational decision-makers always opt for the highest rewards without factoring in the effort or resources needed to determine the best action \cite{Fishburn1982,Savage1954, Von1944}. H.A. Simon's bounded rationality theory addresses these deviations, suggesting that decision-makers operate under limited information processing resources \cite{Simon1982, Simon1984}. Behavioral economics experiments reveal deviations between perfectly rational decision-makers and those constrained by bounded rationality \cite{Camerer2003}. 

\textit{Behavioral epidemiology} has evolved in the last two decades to become a major domain in the research of infectious diseases. The intricate interplay between human behavior and epidemiological outcomes highlights this. Foundational contributions to the literature of behavioral epidemiology, such as by \cite{bauch2005,donofrio2011,manfredi2013,reluga2006,wang2016}, have shaped the understanding of the impact of behavioral responses to disease outbreaks and their control. In particular, it gave insight into how parental vaccination decisions affect childhood disease dynamics and public health interventions, especially with vaccine scare \cite{bauch2012,oraby2,oraby1}. Free-riding behaviors \cite{ibuka2014}, social networks \cite{eames2009}, social learning \cite{ndeffo2012}, and economic incentives \cite{wagner2020}, to name a few factors, have been shown to impact vaccination uptake and thus disease spread. Those research findings emphasize the importance of integrating behavioral and epidemiological modeling approaches to address challenges to disease control \cite{wang2016}. They demonstrate the complex interplay of behavioral, social, cognitive, and economic factors in human behavior in general and vaccine decision-making, in particular, which form the basis for this research.

We seek to add a building block to behavioral epidemiology advances regarding factors influencing parental vaccination decisions, particularly rationality in the decision-making process. We extend the work in \cite{oraby1}, where the authors emphasized integrating social factors into behavioral responses and infectious disease models, and \cite{oraby2}, where the authors explored the use of prospect theory to enhance replicator dynamics in vaccination decision making. Here, we aim to build on these models by focusing on bounded rationality in decision-making resulting from stochasticity in the perception of the payoff of vaccination, leading to a mutation term in the replicator dynamics. Our results contribute to understanding parental behavior towards childhood immunization by showing how collective deviations in those perceptions can make disease persist. We also found that the consequences of these deviations could be counteracted using an optimal temporal discount in the cost of vaccination.

In this paper, we derive a stochastic differential equation of replicator dynamics (RD) for vaccination \cite{fudenberg1998theory}. The stochasticity arises from the perception of the utilities of the choices and deviations from rationality. Using stochastic game theory to model vaccine choice represents a significant step forward in capturing this complexity. In this framework, a collective decision to vaccinate is influenced not only by personal perceptions of risk-benefit but also by the choices made by others in their community and external but common environmental, cultural, informational, and economic factors. These shared factors are essential for understanding collective dynamics and population-level responses under systemic uncertainty. This interdependence of decisions, along with the inherent variability in the way individuals assess risks and benefits under uncertainty, can also be effectively modeled as a stochastic game in which the outcomes depend randomly on the actions of all participants \cite{bauch2004vaccination,betsch2013inviting, oraby1}. To our knowledge, we are the first to introduce and study a stochastic behavioral game theory model in the disease modeling literature. 

The paper is summarized as follows. In Section 2, we derive a new stochastic behavioral game theory equation that models vaccination versus non-vaccination choices. We pair that equation with a stochastic SIR model in which the transmission rate also evolves randomly. This will be followed in Section 3 by an analytical stability analysis of the model's equilibria. We also use stochastic simulations of stochastic differential equations to further study the stability of the model equilibria and corroborate some of the analytical results. Additionally, we use stochastic optimal control of the cost of vaccination to counteract the effect of bounded rationality. We follow that section with a discussion and conclusion section.

\section{Model}\label{Sec:Model}
Let $(\Omega,\mathcal{F},\{\mathcal{F}_t\}_{t\geq 0},\mathbb{P})$ be a complete probability space, with filtration $\{\mathcal{F}_t\}_{t\geq 0}$, with respect to which all Brownian motions $B_i$ and $W_i$ introduced in the following are defined. We use a stochastic replicator equation following the methods of establishing a stochastic replicator equation in \cite{benaim2008robust,Fudenberg1992} to guarantee the feasibility of solutions of the stochastic RD equation. The stochastic RD equation models the evolution over time of strategies or traits in a population by incorporating random fluctuations due to environmental variability or when the population is finite. For instance, let a population have two strategies or traits, A and B, like vaccination and non-vaccination strategies. If strategy A has a higher payoff than B, then the RD equation's deterministic component would imply that the adopters of strategy A must increase. However, the stochastic component could cause temporary increases in the adopters of strategy B, while strategy A is more gainful and vice versa.

Let $X(t)=(X_1(t),X_2(t))$ be the number of vaccinators and non-vaccinators in a community of a total size of $N:=X_1+X_2$ decision makers and let the fractions be denoted by $x_i=X_i/N$. Since $x_1+x_2=1$, we will also use the lower case $x$ to denote $x_1$ whenever it does not cause confusion. Let $u_i(x_1,x_2)=\sum_{j=1}^2 v(s_i,s_j)x_j$ be the expected payoff for a decision maker adopting strategy $s_i$ if randomly and uniformly matched with a decision maker adopting strategy $s_j$. The function $v(s_i,s_j)$ is the payoff for a decision maker who adopts strategy $s_i$ interacting with a decision maker adopting strategy $s_j$. 

A deviation from rationality due to external and internal factors is modeled here by adding a white noise term to the payoff $v(s_i,s_j)$ giving a new payoff function $v(s_i,s_j)+\nu_i \dot{B}_i(t)$. The stochastic process $\dot{B}(t)=(\dot{B}_1(t),\dot{B}_2(t))$ is a two-dimensional white noise. Volatility $\nu_i>0$ is the magnitude of the noise in the perceived utility. We assume here that all vaccinators and all non-vaccinators share the same volatility $\nu_1$ and $\nu_2$, respectively. Maintaining collective population dynamics, the stochastic RD model postulates that stochastic noise affects payoff perceptions uniformly across adopters of the same strategy. This does not account for individual variability in the perceived payoffs. That assumption reflects the influence of common external factors on risk perceptions, aligning with psychological theories of collective decision-making under uncertainty.

The following stochastic differential equation then gives the replicator equation modeling behavioral dynamics,
\begin{equation}\label{stocrep1}
    \dfrac{dX_i(t)}{dt}=\kappa X_i(t)\left(u_i\left(x_1,x_2\right)+\nu_i \dot{B}_i(t)\right)
\end{equation}
or 
\begin{equation}\label{stocrep11}
    dX_i(t)=\kappa X_i(t) u_i\left(x_1,x_2\right) dt+\kappa\nu_i X_i(t) dB_i(t)
\end{equation}
where $\kappa>0$ is the replication rate. Applying It\^o's formula (see Lemma \ref{ito} in the Appendix) to the transformation $x_1=X_1/N$, we get
\begin{equation}\label{stocrep2}
    dx_1=\kappa x_1 x_2 \left(v(s_1,s_2)-v(s_2,s_1)+\kappa \nu_2^2 x_2-\kappa \nu_1^2 x_1\right)dt+\kappa x_1 x_2 (\nu_1 dB_1(t)-\nu_2 dB_2(t))
\end{equation}
Since $\nu_1 dB_1(t)-\nu_2 dB_2(t)$ has the same distribution as $\sqrt{\nu_1^2+\nu_2^2}\, dW(t)$, where $dW(t)$ is also Gaussian with mean zero and variance $dt$, then equation \eqref{stocrep2} is equivalent to:
\begin{equation}\label{stocrep22}
    dx_1=\kappa x_1 x_2 \left(v(s_1,s_2)-v(s_2,s_1)+\kappa \nu_2^2 x_2-\kappa \nu_1^2 x_1\right)dt+\kappa x_1 x_2 \sqrt{\nu_1^2+\nu_2^2}\, dW(t)
\end{equation}
Equation \eqref{stocrep22} guarantees that $0\leq x_1(t)\leq 1$ for all $t>0$ if $0\leq x_1(0)\leq 1$.

We model the payoffs of vaccination to be $v(s_1,s_2)=-\omega+\delta\,x_1$ and of vaccination to be $v(s_2,s_1)=-I+\delta\,x_2$, see \cite{oraby1}. In terms of the vaccination choice model, $\kappa$ is the social learning rate, $\omega$ is the cost of vaccination, and $\delta$ is the group pressure. Social norms of vaccination and non-vaccination enforced by vaccinators and non-vaccinators add $\delta\,x_1$ and $\delta\,x_2$ to the payoffs of vaccination and non-vaccination strategies, respectively.

The noise in perceived payoff results in the introduction of a new \emph{mutation} term $\kappa^2 x_1 x_2 (\nu_2^2 x_2- \nu_1^2 x_1)$ to replicator dynamics; see, e.g. \cite{benaim2008robust,hofbauerevolutionary}. A mutation term induces irrational choices of a strategy that might not be optimal. In terms of vaccination, for certain large values of $\nu_1$, the mutation term is a bounded rational term that can cause agents to adopt the non-vaccination strategy while it has a smaller payoff; that is $v(s_2,s_1)<v(s_1,s_2)$. 

The transmission dynamics are assumed to follow a susceptible-infected-recovered (SIR) compartmental model. Putting $x_1=x$ and $x_2=1-x$ in equation \eqref{stocrep22} and coupling it with the SIR model yields the system of equations: 
\begin{equation}\label{SIR} 
\begin{aligned} 
\frac{dS}{dt} &= \mu\,(1-x)- \beta \, S\,I-\mu \, S-\sigma_1 S\,I\,\dot{W}_1   \\ 
\frac{dI}{dt} &= \beta \, S\,I-(\mu+\gamma) \, I +\sigma_1 S\,I\,\dot{W}_1 \\ 
\frac{dR}{dt} &= \mu \, x+\gamma \, I-\mu \, R \\ 
\frac{dx}{dt} &= \kappa\,x\,(1-x)\,\left[ -\omega+I+\delta\,(2\,x-1)+\kappa \left(\sigma_3^2 - (\sigma_2^2+\sigma_3^2) x \right) + \sqrt{\sigma_2^2+\sigma_3^2}\, \dot{W}_2  \right]
\end{aligned}   
\end{equation}
where $S$, $I$, and $R$ are the proportion of susceptible, infected, and recovered individuals in the population at time $t$, such that $S+I+R=1$. The parameter
$\mu$ is the per capita birth/death rate, $\beta$ is the transmission rate, $\gamma$ is the recovery rate. Recall that $\kappa$ is the social learning rate, $\omega$ is the cost of vaccination, and $\delta$ is the group pressure. The term $\mu(1-x)$ is the recruitment rate of non-vaccinated children to the susceptible compartment due to parental choice.

Let $\sigma_1$ be the magnitude of noise in disease transmission. The last equation in \eqref{SIR}, a stochastic replicator equation, follows the replicator model introduced by \cite{Fudenberg1992}, where it is shown to have solutions $x$ between zero and one; see above. Notice that we changed the notation $\nu_1$ to $\sigma_2$ and $\nu_2$ to $\sigma_3$, to be in order within the coupled model. If $\sigma_2^2=\sigma_3^2=\frac{\delta}{\kappa}$ then, $ \delta\,(2\,x-1)+\kappa \left(\sigma_3^2 - (\sigma_2^2+\sigma_3^2) x \right)=0$. In this case, the collective noise would cancel out the group pressure in the stochastic replicator equation. Hence, it seems that bounded rationality could nullify or counteract social norms.

Since $S+I+R=1$, then the third equation of $R$ in equation \eqref{SIR} is redundant, so henceforth we will consider the model \eqref{SIR} without it and stop using the recovered compartment. Define the set of solutions for the model in Equation \eqref{SIR} as $\mathcal{S}=\{(s,i,x)\in \mathbb{R}_+^3: 0\leq s+i \leq 1 \text{ and } 0\leq x \leq 1\}$. In the following subsections, let $(S(t),I(t),x(t))$ be a solution of the model \eqref{SIR} with initial state $(S(0),I(0),x(0))\in \mathcal{S}$. The proofs of existence, uniqueness, and boundedness of the solution of the model \eqref{SIR} are given in \ref{A.ext}.

\section{Model's Equilibria and their Stability}
In this section, we discuss the different equilibria of the model in \eqref{SIR} and their stability. We note that some specific states of the three stochastic processes $S$, $I$, and $x$ are absorbing states. The states $0$ and $1$ are absorbing states for $x$ and $0$ is an absorbing state for $I$. 

We will support the results in this section with simulations using the Milstein algorithm for the numerical simulation of SDEs \cite{kloeden1992} implemented using Python. We will use parameter values from \cite{oraby1}, such as $\mu=1/50$ year$^{-1}$, $\gamma=365/22$ year$^{-1}$, and $\kappa=1.69$ year$^{-1}$. We have to mention here that most of the simulated processes exhibit slow mixing rates, and hence they take a very long time to converge to equilibrium.

\subsection{Model's equilibria} 
The model has five equilibria of the drift term (the deterministic part) in equation \eqref{SIR}. Those five equilibria and their existence conditions are found to be those of \cite{oraby1} when $\sigma_1^2=\sigma_2^2=\sigma_3^2=0$. Three of those are disease-free equilibria: 
\begin{enumerate}
    \item full vaccine uptake and no susceptibility,  
$
\mathcal{E}_1 \equiv (S_1,I_1,x_1)=(0,0,1) \, , 
$

\item no vaccine uptake and full susceptibility, 
$
\mathcal{E}_2=(1,0,0)
$, and 
\item partial vaccine uptake and partial susceptibility, 
$
\mathcal{E}_3 \equiv (1-x_3,0,x_3)
$, where 
\begin{equation}
    x_3=\frac{\kappa \sigma_3^2-\delta-\omega}{\kappa\sigma_2^2+\kappa\sigma_3^2-2\delta}. \label{x3}
\end{equation}

The equilibrium $\mathcal{E}_3$ exists in two regions:  $$\mathcal{R}_{3,1}=\left\{(\delta,\omega)\in \mathbb{R}^2_+:\delta-\kappa \sigma_2^2<\omega <\kappa \sigma_3^2-\delta  \right\}$$ and $$\mathcal{R}_{3,2}=\left\{(\delta,\omega)\in \mathbb{R}^2_+: \kappa\sigma_{3}^{2}-\delta<\omega<\delta-\kappa\sigma_{2}^{2} \right\}.$$
The two boundaries of either region intersect at $\left(\frac{\kappa}{2}(\sigma_2^2+\sigma_3^2), \frac{\kappa}{2}(\sigma_3^2-\sigma_2^2) \right)$. 
\end{enumerate}
The other two are endemic equilibria that exist when the basic reproduction number $\displaystyle R_0 = \dfrac{\beta}{\mu+\gamma} > 1$. They are:  
\begin{enumerate}
    \item no vaccine coverage, 
$
\displaystyle \mathcal{E}_4 \equiv \left(\frac{1}{R_0},\frac{\mu}{\mu+\gamma}\left(1-\frac{1}{R_0}\right),0\right) 
$, and

\item partial vaccine coverage, 
$
\displaystyle \mathcal{E}_5 \equiv \left(\frac{1}{R_0},\frac{\mu}{\mu+\gamma}\left(1-\frac{1}{R_0}-x_5\right),x_5\right), 
$
where 
$$x_5=\dfrac{\mu\left(1-\dfrac{1}{R_0}\right)+\left(\kappa\sigma_3^2-\delta-\omega\right)(\mu+\gamma)}{\mu+\left(\kappa\sigma_2^2+\kappa\sigma_3^2-2\delta\right)(\mu+\gamma)}.  $$
The equilibrium $\mathcal{E}_5$ exists in two regions $$\mathcal{R}_{5,1}=\left\{(\delta,\omega)\in \mathbb{R}^2_+:-\kappa \sigma_2^2+ \frac{1}{R_0} \kappa (\sigma_2^2+\sigma_3^2)+\delta(1-\frac{2}{R_0}) <\omega<-\delta+\frac{\mu}{\mu+\gamma}\,(1-\frac{1}{R_0})+\kappa \sigma_3^2\right\}$$ and 
$$\mathcal{R}_{5,2}=\left\{(\delta,\omega)\in \mathbb{R}^2_+:-\kappa \sigma_2^2+ \frac{1}{R_0} \kappa (\sigma_2^2+\sigma_3^2)+\delta(1-\frac{2}{R_0}) >\omega> -\delta+\frac{\mu}{\mu+\gamma}\,(1-\frac{1}{R_0})+\kappa \sigma_3^2\right\}$$
The two boundaries of either region intersect at $\left(\dfrac12 \dfrac{\mu}{\mu+\gamma}+\frac{\kappa}{2}(\sigma_2^2+\sigma_3^2), \dfrac12 \dfrac{\mu}{\mu+\gamma} (1-\frac{2}{R_0})+\frac{\kappa}{2}(\sigma_3^2-\sigma_2^2) \right)$. 
\end{enumerate}

The disease-endemic equilibria appear in the next stability analyses as types of behavior and not as exact values. The equilibria $\mathcal{E}_3$ and $\mathcal{E}_5$ represent the partial acceptance of the vaccine. However, in the latter, partial acceptance of the vaccine does not lead to the eradication of the disease.

\subsubsection{Dynamical regimes of stability} 
Define the mean-value function $\overline{Y}(T)$ of any function $Y(t)$ over $[0,T]$ to be
$$\overline{Y}(T)=\frac{1}{T} \int_0^T Y(t) dt$$ and let its limit be defined as $$Y_0=\lim_{T\to \infty} \overline{Y}(T).$$
Let $\displaystyle Y_*=\liminf_{t\to \infty} Y(t)$ and $\displaystyle Y^*=\limsup_{t\to \infty} Y(t)$. 

Next, we define modes of stability, such as exponential stability, and introduce a new mode of stability that we call logistic stability. Logistic stability helps in studying the impermanence of dynamic processes that take value in the unit interval $[0,1]$. To our knowledge, this concept has not been used before in dynamical system analyses.

\begin{definition}[Almost Sure Exponential Stability]
    We say that a process $Y(t)\in [0,\infty)$ has an almost sure exponentially stable equilibrium at $y=0$ if for some $c>0$,
    $$\limsup_{t\to \infty} \frac{1}{t} \log Y(t) \leq -c$$ almost surely.
\end{definition}

\begin{definition}[Almost Sure Logistic Stability]
    We say that a process $Y(t)\in [0,1]$ has an almost sure logistically stable equilibrium at $y=0$ and $1$ if for some $c>0$,
    $$\lim_{t\to \infty} \frac{1}{t} \log \frac{Y(t)}{1-Y(t)} = (-1)^{y+1} c$$ almost surely.
\end{definition}

The unit interval represents a natural domain for many real-life phenomena, such as proportions, probabilities, or normalized populations constrained to the unit interval $[0,1]$. The concept of logistic stability serves the need to characterize the transient dynamics of those types of processes as well as their behavior in convergence towards stability. Other convergence behaviors like exponential or asymptotic stability provide some types of convergence to equilibria. Those types may not capture other systems where the dynamics of the processes are bound and/or demonstrate logistic-like growth or decay which tend to be slower than the exponential ones. In infectious disease modeling, logistic stability could capture the impermanence of disease states within the unit interval, like prevalence, when epidemics go through transient phenomena. Logistic stability could distinguish those transient dynamics from other forms, emphasizing steady-state behavior. The limiting behavior of $\frac{1}{t} \log \frac{Y(t)}{1-Y(t)}$  can quantify the rate at which a disease dies out or persists, providing a new metric for control measures.

Define the stochastic basic reproduction number
\begin{equation}\label{R0s}
    R_0^s:=\frac{\beta}{\mu+\gamma+\frac12 \sigma_1^2}.
\end{equation}
which is always smaller than or equal to $R_0$.

First, we will study the global stability of a disease-free equilibrium. There are several cases in which the disease-free equilibrium is stable. The first case depends on the stochastic basic reproduction number and the magnitude of the noise in transmission.

\begin{theorem}\label{thm:diseasefree}
If $R_0^s<1$ and $\displaystyle \sigma_1^2\leq  \beta$ or if $\displaystyle \frac{\sigma_1^2}{\beta}>\max\left(1,\dfrac{R_0}{2}\right)$, then a disease-free equilibrium is exponentially globally stable almost surely.
\end{theorem}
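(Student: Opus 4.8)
The plan is to track the logarithm of the infected fraction $I$ and show that, under either hypothesis, its long-run growth rate is strictly negative almost surely, which is exactly the criterion in the Almost Sure Exponential Stability definition applied to $Y=I$.

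First I would apply It\^o's formula (Lemma~\ref{ito}) to $\log I(t)$, starting from the $I$-equation $dI = I\big(\beta S-(\mu+\gamma)\big)\,dt + \sigma_1 S I\,dW_1$. Because the diffusion coefficient is $\sigma_1 S I$, the quadratic-variation correction contributes $-\tfrac12\sigma_1^2 S^2$, so that
\[
d(\log I) = \Big[\beta S-(\mu+\gamma)-\tfrac12\sigma_1^2 S^2\Big]dt + \sigma_1 S\,dW_1 .
\]
Integrating from $0$ to $t$ and dividing by $t$ gives
\[
\frac1t\log I(t) = \frac1t\log I(0) + \frac1t\int_0^t f\big(S(\tau)\big)\,d\tau + \frac{\sigma_1}{t}\int_0^t S(\tau)\,dW_1(\tau),
\]
where $f(S):=\beta S-(\mu+\gamma)-\tfrac12\sigma_1^2 S^2$. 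The boundedness result of \ref{A.ext} keeps $S(\tau)\in[0,1]$ for all $\tau$, so the argument reduces to bounding $f$ on $[0,1]$ and controlling the martingale term.

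The core step is the elementary maximization of the downward-opening parabola $f$ over $[0,1]$, and this is where the two hypotheses separate. The unconstrained vertex sits at $S^\ast=\beta/\sigma_1^2$. When $\sigma_1^2\le\beta$ the vertex lies at or beyond $1$, so $f$ is increasing on $[0,1]$ and $\max_{[0,1]}f=f(1)=\beta-(\mu+\gamma)-\tfrac12\sigma_1^2$, which is negative precisely when $R_0^s<1$; this matches the first hypothesis. When $\sigma_1^2>\beta$ the vertex is interior and $\max_{[0,1]}f=f(S^\ast)=\tfrac{\beta^2}{2\sigma_1^2}-(\mu+\gamma)$, which is negative precisely when $\sigma_1^2/\beta>R_0/2$; combined with $\sigma_1^2/\beta>1$ this is the second hypothesis $\sigma_1^2/\beta>\max(1,R_0/2)$. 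In either case one obtains a constant $c>0$ with $f(S)\le -c$ for all $S\in[0,1]$, whence $\tfrac1t\int_0^t f(S(\tau))\,d\tau\le -c$.

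Finally I would dispose of the stochastic integral $M(t):=\sigma_1\int_0^t S(\tau)\,dW_1(\tau)$. It is a continuous local martingale with quadratic variation $\langle M\rangle_t=\sigma_1^2\int_0^t S^2\,d\tau\le\sigma_1^2 t$, growing at most linearly, so the strong law of large numbers for local martingales yields $M(t)/t\to 0$ almost surely. Since $\tfrac1t\log I(0)\to 0$ as well, letting $t\to\infty$ gives $\limsup_{t\to\infty}\tfrac1t\log I(t)\le -c<0$ almost surely, which is the claimed exponential decay of $I$ to the disease-free set. I expect the only real obstacle to be bookkeeping: checking that the two stated inequality regions exactly exhaust the two cases of the parabola maximization — in particular that the $\max(1,R_0/2)$ truncation is precisely what reconciles the interior-vertex case with the constraint $S^\ast>1$ — rather than any genuine analytic difficulty.
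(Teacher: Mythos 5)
Your proposal is correct and follows essentially the same route as the paper's proof: It\^o's formula applied to $\log I(t)$, maximization of the downward parabola $\beta S - (\mu+\gamma) - \tfrac12\sigma_1^2 S^2$ over $[0,1]$ with the two hypotheses corresponding exactly to the boundary-maximum ($\sigma_1^2\le\beta$) and interior-vertex ($\sigma_1^2>\beta$) cases, and the strong law of large numbers for continuous local martingales to eliminate $M(t)/t$. The only differences are cosmetic (the paper keeps $-(\mu+\gamma)$ outside the quadratic and writes the negative bounds in the factored forms $-\frac{\beta(\mu+\gamma)}{\sigma_1^2}\left(\frac{\sigma_1^2}{\beta}-\frac{R_0}{2}\right)$ and $-\left(\mu+\gamma+\frac12\sigma_1^2\right)\left(1-R_0^s\right)$, while you phrase them as sign conditions on $f$).
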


\begin{proof}
We will prove the theorem in two parts, by showing that the following two inequalities hold: 
\begin{enumerate}
    \item[(C.I)] If $\displaystyle \frac{\sigma_1^2}{\beta}>\max\left(1,\dfrac{R_0}{2}\right)$, then 
    \begin{equation}\label{limsup1}
        \limsup_{t\to \infty} \dfrac{\log I(t)}{t}\leq -\dfrac{\beta(\mu+\gamma)}{\sigma_1^2} \left(\frac{\sigma_1^2}{\beta}-\dfrac{R_0}{2}\right) <0 \text{ a.s.} 
    \end{equation}
    \item[(C.II)] If $R_0^s<1$ and $\sigma_1^2 \leq \beta$, then 
    \begin{equation}\label{limsup2}
        \limsup_{t\to \infty} \dfrac{\log I(t)}{t}\leq -\left(\mu+\gamma+\frac12 \sigma_1^2\right)\left(1-R_0^s\right)  <0 \text{ a.s.}
    \end{equation}
\end{enumerate}
Let $V(t,I)=\log(I)$, then by It\^o's formula (see Lemma \ref{ito} in the Appendix),
\begin{align*}
dV=d\log(I)&=V_t dt +V_I (\beta \, S\,I-(\mu+\gamma) \, I) dt + \frac12 (\sigma_1 S\,I)^2 V_{II} dt + (\sigma_1 S\,I) V_I dW_1(t) \\
&= (\beta \, S-(\mu+\gamma) -\frac12 \sigma_1^2 S^2) dt+\sigma_1 S  dW_1(t)
\end{align*}
which by integration over the interval $(0,t)$ gives
\begin{align*}
\log(I(t))-\log(I(0))&= \int_0^t (\beta \, S(u)-(\mu+\gamma) -\frac12 \sigma_1^2 \left(S(u))^2\right) du+\sigma_1 \int_0^t S(u)  dW_1(u)\\
&= \int_0^t \beta \, S(u) \left( 1-\frac{\sigma_1^2}{2\beta} S(u)\right) du-(\mu+\gamma)t+M(t)
\end{align*}
where $M(t)=\sigma_1 \int_0^t S(u)  dW_1(u)$ is a local continuous martingale with $M(0)=0$. Since
\begin{equation}
    \limsup_{t\to \infty} \dfrac{E\left(M(t)^2\right)}{t} = \limsup_{t\to \infty} \dfrac{\sigma_1^2 \int_0^t (S(u))^2 du}{t}  
    \leq \limsup_{t\to \infty} \dfrac{\sigma_1^2 t}{t} 
    = \sigma_1^2<\infty,
\end{equation}
then by the Strong Law of Large Numbers, $\displaystyle \limsup_{t\to \infty} \dfrac{M(t)}{t}=0$ almost surely, , see \cite{mao2007stochastic} and Lemma \ref{martin} in the Appendix.

Now to prove (C.I), when $\displaystyle \frac{\sigma_1^2}{\beta}>\max\left(1,\dfrac{R_0}{2}\right)$, note that the quadratic function 
\begin{equation}
     f(S)=\beta S\left(1-\frac{\sigma_1^2}{2\beta} S\right) = \frac{\sigma_1^2}{2} S\left(\frac{2\beta}{\sigma_1^2}- S\right)
\end{equation} 
attains its maximum value of $\displaystyle \frac{\beta^2}{2\sigma_1^2}$ when $\displaystyle S=\frac{\beta}{\sigma_1^2}$, which is in the interval $(0,1)$ if $\sigma_1^2> \beta$, in which case  
\begin{align*}
\frac{\log(I(t))-\log(I(0))}{t}
&= 
\frac{\int_0^t \beta \, S(u) \left( 1-\frac{\sigma_1^2}{2\beta} S(u)\right) du-(\mu+\gamma)t+M(t)}{t} \\
& \leq \frac{\beta^2}{2\sigma_1^2}-(\mu+\gamma)+\dfrac{M(t)}{t}
\end{align*}
and so
\begin{equation*}
\limsup_{t \to \infty}\dfrac{\log(I(t))-\log(I(0))}{t}
\leq \frac{\beta^2}{2\sigma_1^2}-(\mu+\gamma) \text{ a.s.}
\end{equation*}
From this  (C.I) follows as
    \begin{align*}
        \limsup_{t\to \infty} \frac{\log (I(t))}{t} 
        &\leq  \frac{\beta^2}{2\sigma_1^2}-(\mu+\gamma) \\
        & =  -\frac{\beta(\mu+\gamma)}{\sigma_1^2} \left(\frac{\sigma_1^2}{\beta}-\frac{\beta}{2(\mu+\gamma)}\right) \\
        & = -\frac{\beta(\mu+\gamma)}{\sigma_1^2} \left(\frac{\sigma_1^2}{\beta}-\frac{R_0}{2}\right) \\
        & <0 \text{ a.s.} 
    \end{align*}

To prove (C.II), when $\sigma_1^2 \leq \beta$, consider the function 
\begin{equation}
f(S)=\beta S\left(1-\frac{\sigma_1^2}{2\beta} S\right)  = \frac{\sigma_1^2}{2}S\left(\frac{2\beta}{\sigma_1^2}-S \right),
\end{equation}
which attains a maximum value on the  interval $[0,1]$ at $S=1$ with maximum value
\begin{equation}
\max_{S\in [0,1]}f(S) =  \beta -\frac{\sigma_1^2}{2}.    
\end{equation}
 Thus,  
\begin{align*}
\dfrac{\log(I(t))-\log(I(0))}{t}
& = \frac{\int_0^t \beta \, S(u) \left( 1-\frac{\sigma_1^2}{2\beta} S(u)\right) du-(\mu+\gamma)t+M(t)}{t} \\
& \leq \beta -\frac12 \sigma_1^2 -(\mu+\gamma)+\dfrac{M(t)}{t}
\end{align*}
and so 
\begin{equation*}
\limsup_{t \to \infty}\dfrac{\log(I(t))-\log(I(0))}{t}
\leq \beta -\frac12 \sigma_1^2 -(\mu+\gamma) \text{ a.s.}
\end{equation*}
From this  (C.II) follows as
 \begin{align*}
        \limsup_{t\to \infty} \frac{\log (I(t))}{t} 
        &\leq  \beta -\frac12 \sigma_1^2 -(\mu+\gamma)  \\
        & = -\left(\mu+\gamma+\frac12 \sigma_1^2\right)\left(1-\frac{\beta}{\mu+\gamma+\frac12 \sigma_1^2}\right) \\
       & = -\left(\mu+\gamma+\frac12 \sigma_1^2\right)\left(1-R_0^s\right)  \\
        & <0 \text{ a.s.}
    \end{align*}
\end{proof}

Theorem \ref{thm:diseasefree} seems to be a classic conclusion; see, e.g., \cite{han2015threshold}; but it is shown here to hold even with the presence of a stochastic process $x(t)$ in the first equation. It shows that disease extinction could still be achieved here even when $R_0>1$ as long as $R_0^s<1$, see Figure \ref{fig:fig1} (a). Another case in which the disease could be eradicated while $R_0>1$ is when $\displaystyle \frac{\sigma_1^2}{\beta}>\max\left(1,\dfrac{R_0}{2}\right)$ as shown in Figure \ref{fig:fig1} (b). Even when $R_0^s>1$; or specifically, when the conditions of Theorem \ref{thm:diseasefree} are not satisfied, eradication of the disease is possible through vaccination, as we will see next.   

\begin{figure}[H]
    \centering
     \subfigure[]{\includegraphics[width=8.5 cm]{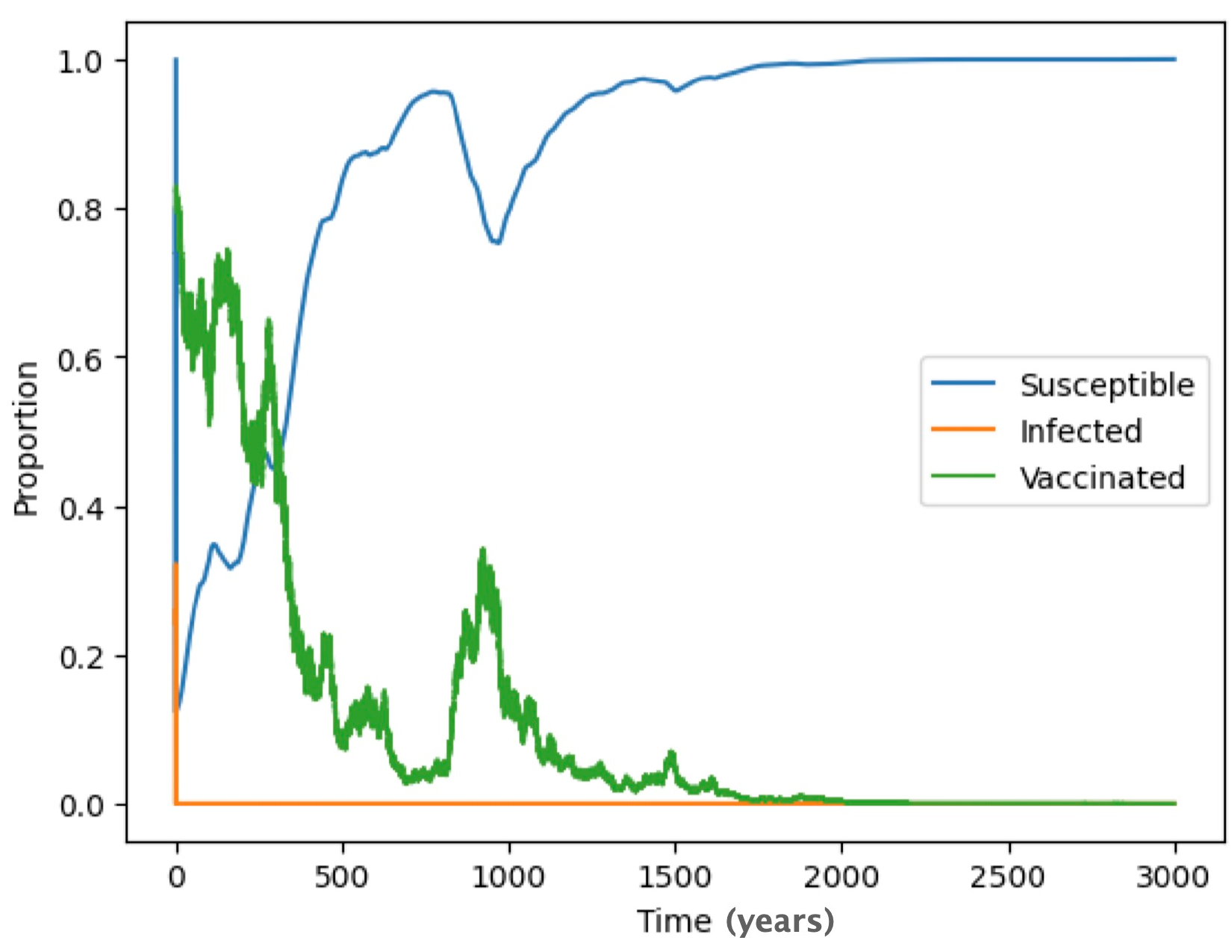}}
     \subfigure[]{\includegraphics[width=8.5 cm]{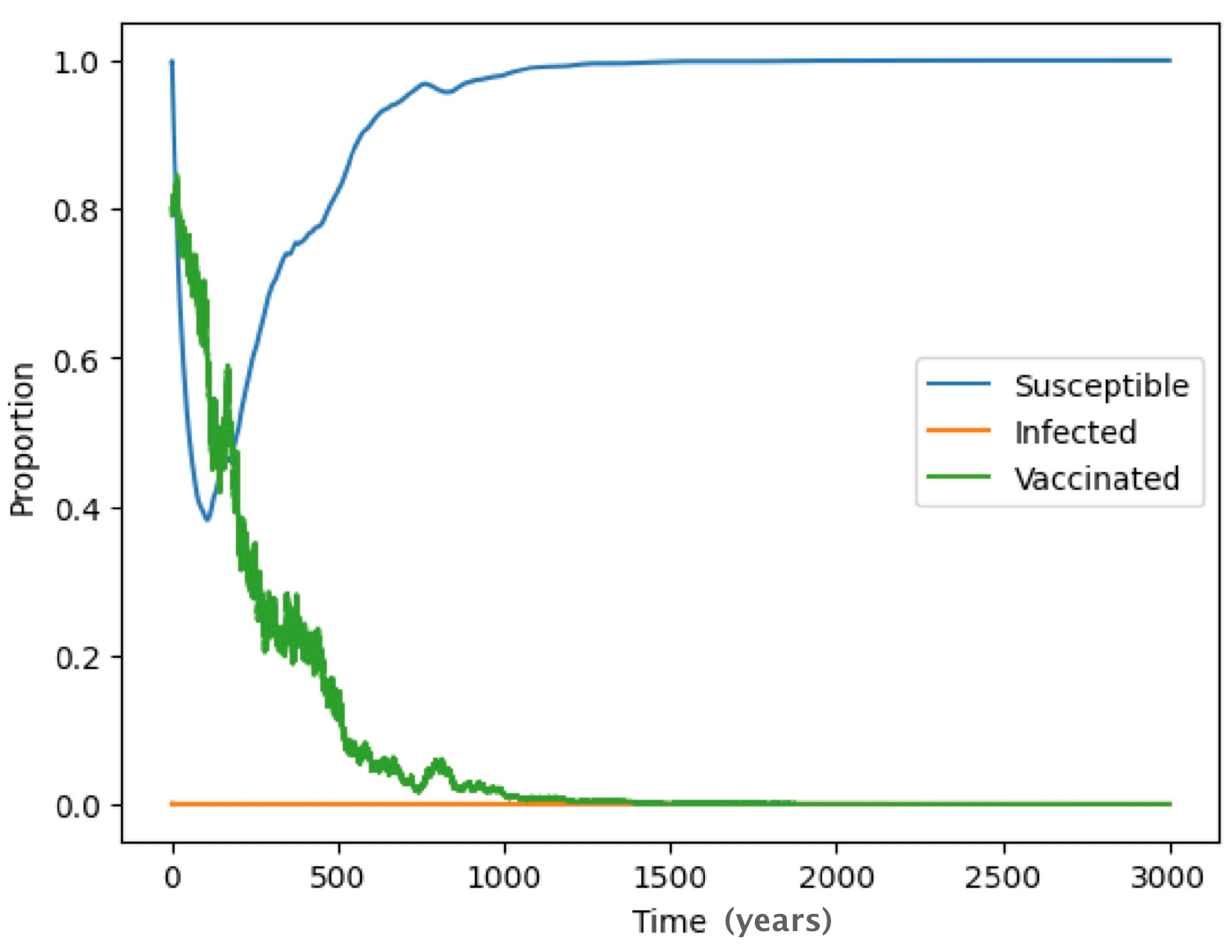}}   
    \caption{Simulation of susceptible $S$, infected $I$ and vaccinated $x$, when (a) $R_0^s<1$ and $\displaystyle \sigma_1^2\leq  \beta$ at $\beta=31$ and $\sigma_1^2=30$  giving $R_0=1.87$ and $R_0^s=.98$, and (b) $\displaystyle \frac{\sigma_1^2}{\beta}>\max\left(1,\dfrac{R_0}{2}\right)$ at $\beta=33.3$ and $\sigma_1^2=50$ giving $R_0=2$ and $R_0^s=.8$ (even when it is more than one). The rest of the parameters are $\mu=1/50$ year$^{-1}$, $\gamma=365/22$ year$^{-1}$, $\kappa=1.69$ year$^{-1}$, $\omega=.0015$, $\delta=.0005$, $\sigma_2^2=.0008$, and $\sigma_3^2=.0006$.}
    \label{fig:fig1}
\end{figure}

In \cite{oraby1}, using a deterministic model, it was shown that while $R_0>1$, the disease could be eradicated as the uptake of the vaccine increases to the level of $100\%$ when the peer pressure of the vaccination group is greater than the cost of the vaccine ($\delta>\omega$). However, it was also shown that if the cost of vaccination and the peer pressure of the non-vaccinating group exceed the risk of the disease ($\omega+\delta>\frac{\mu}{\mu+\gamma}(1-\frac{1}{R_0})$), parents might skip vaccination and the disease becomes endemic; see also \cite{oraby2}. Moreover, both equilibria were shown to be bistable. Here, bounded rationality changes those findings in a way that noise can impede the roles played by the cost of vaccination and peer pressure in \cite{oraby1}. The following theorem shows how noises in the perceived utilities can change the vaccination selection, and consequently the disease dynamics. 

\begin{theorem}\label{thm:vax}
If $\bar{I}(t)\xrightarrow[]{t\to\infty} I_0$, then $x(t)$ has an almost sure logistic stable equilibrium at $0$ and $1$ under the following conditions:
\begin{enumerate}
    \item\label{vax: 1} If $\omega+\kappa\frac{\sigma_2^2-\sigma_3^2}{2}>\max(I_0-\delta,\delta)$, then $x(t)\xrightarrow[]{t\to\infty} 0$ logistically almost surely, and
    \item\label{vax: 2} If $\omega+\kappa\frac{\sigma_2^2-\sigma_3^2}{2}<\min(I_0-\delta,\delta)$, then $x(t)\xrightarrow[]{t\to\infty} 1$ logistically almost surely, and also, $I(t)\xrightarrow[]{t\to\infty} 0$ and $S(t)\xrightarrow[]{t\to\infty} 0$ $a.s.$ whether $R_0^s$ is more or less than one.
\end{enumerate}
\end{theorem}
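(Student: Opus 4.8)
The plan is to reduce both statements to a single stochastic differential equation for the logit $Z(t):=\log\frac{x(t)}{1-x(t)}$, which is the natural object since logistic stability is phrased through $\frac1t\log\frac{Y}{1-Y}$. First I would apply It\^o's formula (Lemma \ref{ito}) to $Z=g(x)$ with $g(x)=\log x-\log(1-x)$, using $g'(x)=\frac{1}{x(1-x)}$ and $g''(x)=\frac{2x-1}{x^2(1-x)^2}$. Because the drift and diffusion of the $x$-equation in \eqref{SIR} both carry the factor $\kappa x(1-x)$, these cancel against $g'$, while the It\^o correction $\frac12 g''(x)\big(\kappa x(1-x)\sqrt{\sigma_2^2+\sigma_3^2}\big)^2$ collapses to $\frac{\kappa^2(\sigma_2^2+\sigma_3^2)}{2}(2x-1)$. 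Collecting terms, I expect the clean additive-noise equation
\begin{equation*}
dZ=\kappa\Big[\,I-\Big(\omega+\kappa\tfrac{\sigma_2^2-\sigma_3^2}{2}\Big)+\delta(2x-1)\Big]\,dt+\kappa\sqrt{\sigma_2^2+\sigma_3^2}\,dW_2 ,
\end{equation*}
where I abbreviate $A:=\omega+\kappa\frac{\sigma_2^2-\sigma_3^2}{2}$, so that condition \ref{vax: 1} reads $A>\max(I_0-\delta,\delta)$ and \ref{vax: 2} reads $A<\min(I_0-\delta,\delta)$.

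Next I would integrate over $(0,t)$ and divide by $t$. Since the diffusion coefficient of $Z$ is constant, the stochastic integral is simply $\kappa\sqrt{\sigma_2^2+\sigma_3^2}\,W_2(t)$, and $W_2(t)/t\to0$ almost surely (a fortiori by the martingale strong law used for Theorem \ref{thm:diseasefree}, Lemma \ref{martin}). Averaging $2x-1$ and invoking the hypothesis $\overline I(t)\to I_0$, this yields
\begin{equation*}
\frac{Z(t)}{t}=\kappa\big[\,\overline I(t)-A-\delta+2\delta\,\overline x(t)\,\big]+o(1)\qquad\text{a.s.}
\end{equation*}
The monotone dependence on $x$ makes part \ref{vax: 2} the easier half: bounding $2\delta\,\overline x(t)\ge0$ gives $\liminf_t\frac{Z(t)}{t}\ge\kappa(I_0-A-\delta)>0$ under \ref{vax: 2}, hence $Z(t)\to+\infty$ and $x(t)\to1$ logistically. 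Once $x\to1$ I would feed this into \eqref{SIR}: the recruitment term $\mu(1-x)\to0$ makes the $S$-drift eventually $\le-\mu S$, forcing $S\to0$, and then $\beta S-(\mu+\gamma)\to-(\mu+\gamma)<0$ in the $I$-equation forces $I\to0$ — exactly the asserted behavior ``whether $R_0^s$ is more or less than one.''

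The delicate half is part \ref{vax: 1}, where the self-referential term $2\delta\,\overline x(t)$ obstructs a direct estimate: the crude bound $\overline x\le1$ only gives $\limsup\frac{Z}{t}\le\kappa(I_0-A+\delta)$, which is negative only under the stronger $A>I_0+\delta$, not under \ref{vax: 1}. I would therefore close the loop by self-consistency. The only candidate limits of $\frac1t\log\frac{x}{1-x}$ are obtained by substituting $\overline x\to0$ (rate $\kappa(I_0-A-\delta)$, the $x\to0$ scenario) and $\overline x\to1$ (rate $\kappa(I_0-A+\delta)$, the $x\to1$ scenario). Argument (a) of the $\max$, $A>I_0-\delta$, makes the first rate strictly negative, consistent with $x\to0$. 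To exclude the competing $x\to1$ scenario I would reuse the coupling from part \ref{vax: 2}: $x\to1$ forces $I\to0$ and hence $I_0=0$, at which point self-consistency of that branch would demand $\kappa(0-A+\delta)>0$, i.e. $A<\delta$, contradicting argument (b), $A>\delta$. Thus only the $x\to0$ branch survives, giving $\frac1t\log\frac{x}{1-x}\to-\kappa(A+\delta-I_0)<0$. I expect the main technical obstacle to be making this dichotomy rigorous — ruling out non-convergent or interior-accumulating behaviour of $\overline x(t)$ — which I would control using the monotonicity of the $Z$-drift in $x$ (equivalently, the drift equals $\kappa[I-A+\delta\tanh(Z/2)]$, strictly increasing in $Z$) together with the additive, constant-variance noise, so that $Z$ cannot linger near an interior level and must be transient to $\pm\infty$.
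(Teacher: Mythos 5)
Your proposal follows the same skeleton as the paper's proof: an It\^o computation reducing everything to the logit $Z=\log\frac{x}{1-x}$, whose drift $\kappa\left[I-A+\delta(2x-1)\right]$ (with $A:=\omega+\kappa\frac{\sigma_2^2-\sigma_3^2}{2}$) and constant diffusion $\kappa\sqrt{\sigma_2^2+\sigma_3^2}$ agree exactly with the paper's (obtained there by computing $d\log x$ and $d\log(1-x)$ separately and subtracting), followed by time-averaging and the strong law giving $W_2(t)/t\to 0$. For part~\ref{vax: 2}, however, you do something genuinely better: the one-sided bound $2\delta\bar{x}(t)\geq 0$ yields $\liminf_{t}Z(t)/t\geq\kappa(I_0-A-\delta)>0$ directly, with no case analysis, whereas the paper handles both parts by the same branch self-consistency argument (evaluate the rate $L$ at $x_0=0$ and at $x_0=1$, $I_0=0$, and check signs). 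You also supply the coupling argument for $I(t)\to 0$ and $S(t)\to 0$, which the theorem asserts but the paper's proof never addresses; one quibble there is that your claim that the $S$-drift bound alone forces $S\to 0$ glosses over the $-\sigma_1 S I\,dW_1$ term --- the clean route is through $S+I$, whose dynamics $\frac{d(S+I)}{dt}=\mu(1-x)-\mu(S+I)-\gamma I$ carry no noise, so ordinary comparison applies, after which $\frac{1}{t}\log I(t)\to-(\mu+\gamma)$ kills $I$. For part~\ref{vax: 1} your argument is precisely the paper's: condition (a) makes the $x\to 0$ branch rate $\kappa(I_0-A-\delta)$ negative, and condition (b) rules out the $x\to 1$ branch because that branch forces $I_0=0$ and would need $\kappa(\delta-A)>0$. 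The ``main technical obstacle'' you flag --- excluding trajectories with $Z(t)/t\to 0$ and $\bar{x}(t)$ settling at the drift's unstable interior zero --- is real, but it is equally unaddressed in the paper, whose proof simply treats $x_0\in\{0,1\}$ as the only possibilities (``If $x_0=0$, which is equivalent to $x(t)$ approaches $0$ almost surely\ldots''). Your proposed repair (drift strictly increasing in $Z$ plus nondegenerate additive noise implies transience of $Z$) is the right tool, and if executed it would make your proof strictly more complete than the published one.
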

\begin{proof}
Using It\^o's formula (see Lemma \ref{ito} in the Appendix),
\begin{eqnarray*}
d\log(x(t))&=&\kappa (1-x(t)) \left[(\kappa\sigma_3^2-\delta-\omega)- (\kappa\sigma_2^2+\kappa\sigma_3^2-2\delta)x(t) +I(t)-\kappa \frac{\sigma_2^2+\sigma_3^2}{2} (1-x(t))\right]dt \\ && + \kappa \sqrt{\sigma_2^2+\sigma_3^2}  (1-x(t)) d{W}_2(t)
\end{eqnarray*}
and similarly
 \begin{eqnarray*}
d\log(1-x(t))&=&-\kappa x(t) \left[(\kappa\sigma_3^2-\delta-\omega)- (\kappa\sigma_2^2+\kappa\sigma_3^2-2\delta)x(t) +I(t)+\kappa \frac{\sigma_2^2+\sigma_3^2}{2} x(t)\right]dt \\ && -\kappa \sqrt{\sigma_2^2+\sigma_3^2}  x(t) d{W}_2(t).
\end{eqnarray*}

Therefore,
\begin{eqnarray*}
d\log \left(\dfrac{x(t)}{1-x(t)} \right)&=& \kappa\left(-\kappa \frac{\sigma_2^2-\sigma_3^2}{2}-\delta-\omega+2\delta x(t) +I(t)\right)dt  +\kappa \sqrt{\sigma_2^2+\sigma_3^2}   d{W}_2(t).
\end{eqnarray*}
Since $\frac{W_2(t)}{t}\xrightarrow[]{t\to\infty} 0$ almost surely due to the Strong Law of Large Numbers of Brownian motion, then 
\begin{equation}
\lim_{t \to \infty} \frac{\log \left(\dfrac{x(t)}{1-x(t)}\right)}{t}
= \kappa L(x_0,I_0)
\end{equation}
almost surely, where
$$L(x_0,I_0):=-\kappa \frac{\sigma_2^2-\sigma_3^2}{2}-\delta-\omega+2\delta x_0 +I_0.$$ 

If $x_0=0$, which is equivalent to $x(t)$ approaches $0$ almost surely, then $L(x_0,I_0)=-\kappa \frac{\sigma_2^2-\sigma_3^2}{2}-\omega-\delta+I_0$. If $x_0=1$ and $I_0=0$, which is equivalent to $x(t)$ approaches $1$ almost surely, then $L(x_0,I_0)=-\kappa \frac{\sigma_2^2-\sigma_3^2}{2}-\omega+\delta$. The first and second values of $L(x_0,I_0)$ could have opposite signs for the same parameter values as when $I_0-\delta<\kappa \frac{\sigma_2^2-\sigma_3^2}{2}+\omega<\delta$. Therefore, $x(t)$ approaches $0$ logistically almost surely if $-\kappa \frac{\sigma_2^2-\sigma_3^2}{2}-\omega-\delta+I_0<0$ and $-\kappa \frac{\sigma_2^2-\sigma_3^2}{2}-\omega+\delta<0$. Meanwhile, $x(t)$ approaches $1$ logistically almost surely if $-\kappa \frac{\sigma_2^2-\sigma_3^2}{2}-\omega+\delta>0$ and $-\kappa \frac{\sigma_2^2-\sigma_3^2}{2}-\omega-\delta+I_0>0$.
\end{proof}

An overlap region appears in Figure \ref{fig:fig2a} (a) as stated in the proof of Theorem \ref{thm:vax} when $I_0-\delta<\kappa \frac{\sigma_2^2-\sigma_3^2}{2}+\omega<\delta$. In that region, both equilibria could be stable, as the simulation demonstrates below. The dashed region with vertical lines in Figure \ref{fig:fig2a} (a) is for $\omega+\kappa\frac{\sigma_2^2-\sigma_3^2}{2}>I_0-\delta$ in which the disease-endemic and no-vaccination equilibrium $\mathcal{E}_4$-type equilibrium is stable; see Figure \ref{fig:fig2a} (b) for a simulation of a case in that region. Meanwhile, the dashed region with diagonal lines in Figure \ref{fig:fig2a} (a) is for $\omega+\kappa\frac{\sigma_2^2-\sigma_3^2}{2}<\delta$, in which disease-free and full-vaccination equilibrium $\mathcal{E}_1$ is stable; see Figure \ref{fig:fig2a} (c) for a simulation of a case in that region. Again, in the common region, both $\mathcal{E}_1$ and $\mathcal{E}_4$ are stable, see below.

When $\sigma_2^2=\sigma_3^2=0$, the two conditions of Theorem \ref{thm:vax} are shown to be sufficient for the local asymptotic stability of $\mathcal{E}_4$ and $\mathcal{E}_1$, respectively, for the deterministic model in \cite{oraby1}. In particular, the latter equilibrium of the disease-free state was shown to be stable when $\delta>\omega$. However, the magnitudes of the white noise in the utilities of both groups, $\sigma_2^2$ and $\sigma_3^2$, influence the opinion and uptake of vaccination. In Figure \ref{fig:fig2a} (b), the disease cannot be eradicated even when $\delta>\omega$ is mainly due to the level of noise in the vaccinator group. Figure \ref{fig:fig2a} (c) represents a case where the conditions of Theorem \ref{thm:diseasefree} are not satisfied, but disease eradication is still possible as implied by part \ref{vax: 2} of Theorem \ref{thm:vax}. If $R_0^s>1$, the disease could be eradicated by increasing the peer pressure of the vaccination group or decreasing the noise in the utility of the vaccinator's group conducing to $\omega+\kappa\frac{\sigma_2^2-\sigma_3^2}{2}<\delta$. Notice also that an increase in the noise of the non-vaccinator group could also increase vaccine uptake.   

\begin{figure}[H]
    \centering
     \subfigure[]{\includegraphics[width=8.5 cm]{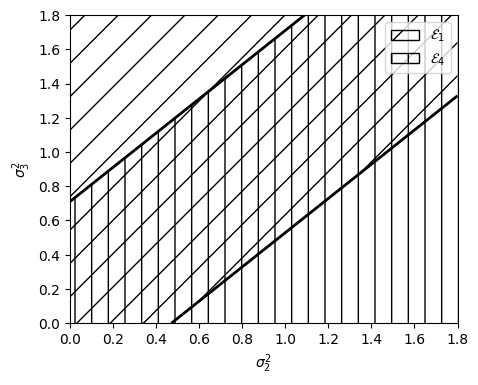}}
     \subfigure[]{\includegraphics[width=8.5 cm]{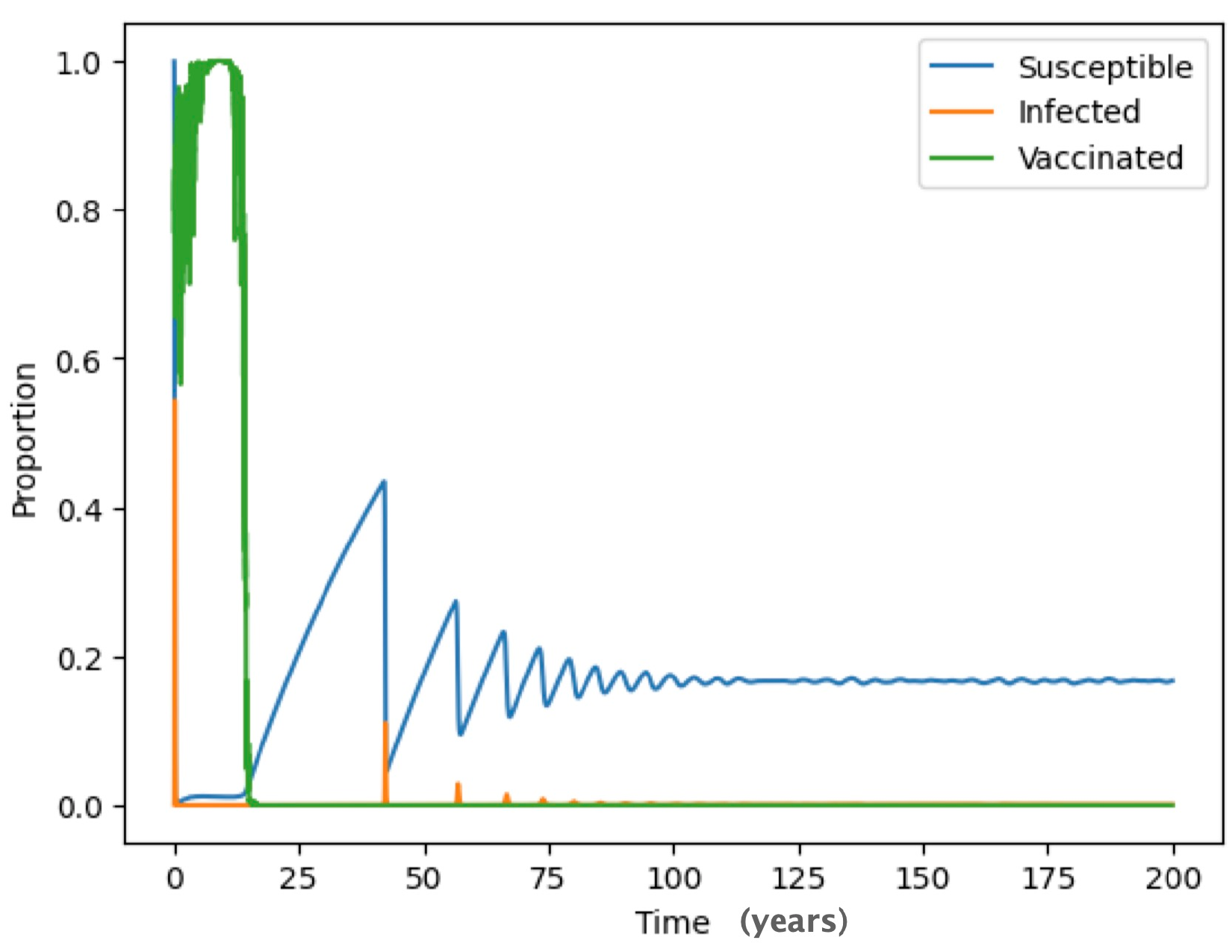}}
     \subfigure[]{\includegraphics[width=8.5 cm]{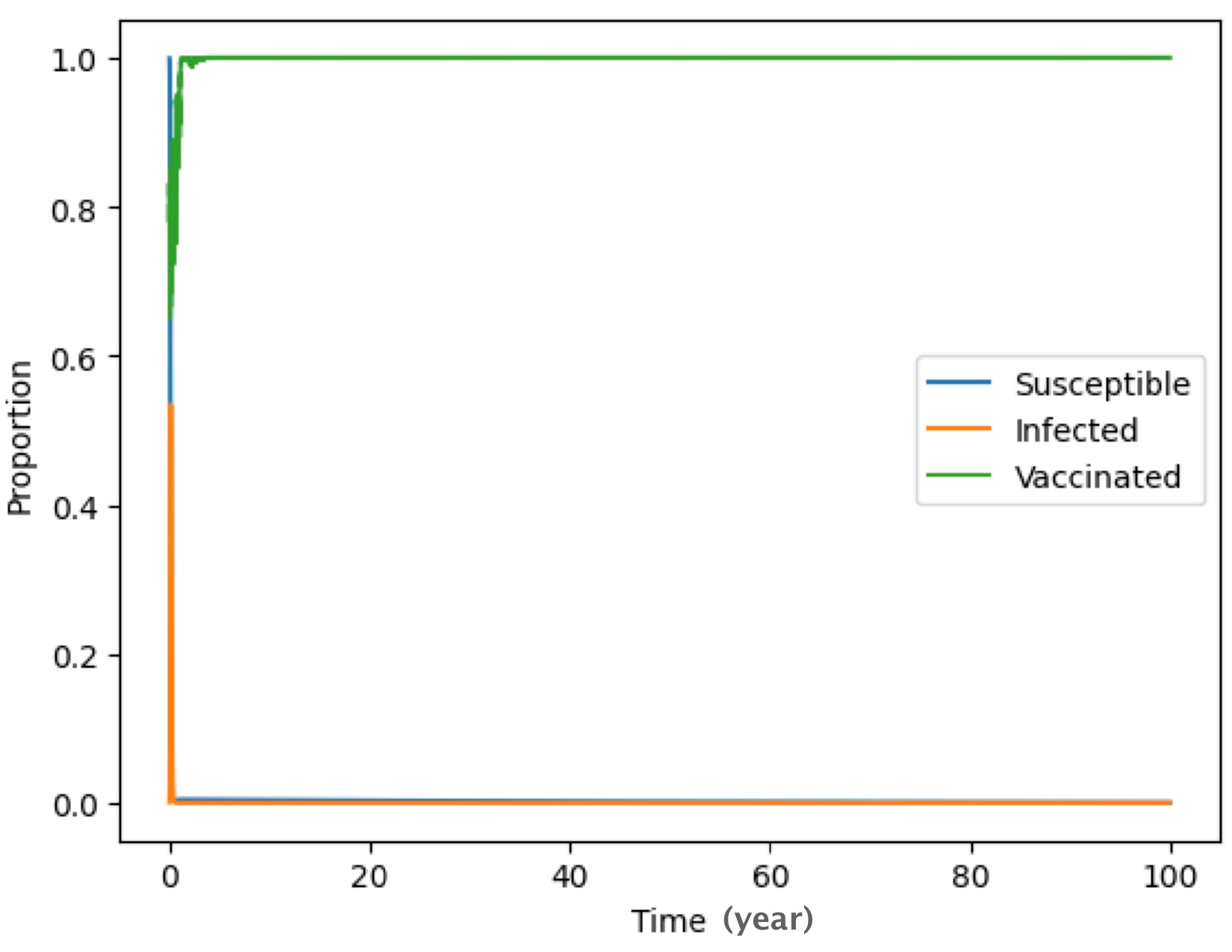}}
    \caption{(a) Parameter plane of $\sigma_2^2$ versus $\sigma_3^2$ based on Theorem \ref{thm:vax}.  Other parameter values are $\beta = 100$,  $\mu = 1/50$, $\gamma = 365/22$, $\kappa = 1.69$,  $\omega = 0.1$, $\delta = 0.5$, and $\sigma_1^2=.16$. In that case, $R_0^s=6$. (b) Simulation of the susceptible, infected, and vaccinated when $\sigma_2^2=1.5$ and $\sigma_3^2=.2$. The endemic equilibrium of $\mathcal{E}_4$ type is stable. (c) Simulation of the susceptible, infected, and vaccinated when $\sigma_2^2=.2$ and $\sigma_3^2=1.5$. The disease-free equilibrium of $\mathcal{E}_1$ type is stable.}
    \label{fig:fig2a}
\end{figure}

Achieving the goal of disease eradication in the vaccination campaign requires minimizing variability in decision-making and promoting consistent, rational choices throughout the population. However, that aim might not be plausible due to external factors. The noise in the perceived utility of the vaccination group could increase further, exacerbating disease eradication. When the social and stochastic external factors that influence utility functions are outside of the control of the policymaker, a policy to change the cost of vaccination is warranted. A stochastic optimal control approach (see e.g. \cite{yong2012stochastic}) could provide a policy for preventing infectious diseases by finding the appropriate cost of vaccination. As such, let the control variable $0<u(t)\leq 1$ be the degree of reduction in the cost of vaccination. Including that function in the stochastic replicator equation gives rise to the following equation
\begin{equation}\label{SIR3.1}
\frac{dx}{dt} = \kappa x (1-x) \left[ -\omega \,(1-u) + I + \delta (2x-1) + \kappa \left( \sigma_3^2 - (\sigma_2^2 + \sigma_3^2) x \right) + \sqrt{\sigma_2^2 + \sigma_3^2} \dot{W}_2(t) \right],
\end{equation}
The details of the stochastic optimal control are given in \ref{optimal}. Figure \ref{fig:fig6} (a) provides a simulation in which $\mathcal{E}_4$ is stable. Full uptake of the vaccine and disease eradication is achieved, see Figure \ref{fig:fig6} (b), with an optimal control $u^*(t)$ that reduces the cost of vaccination, see Figure \ref{fig:fig6} (c). We note that the optimal reduction started to decline smoothly after being a constant over a relatively short period of time.

\begin{figure}[H]
    \centering
     \subfigure[]{\includegraphics[width=8.5 cm]{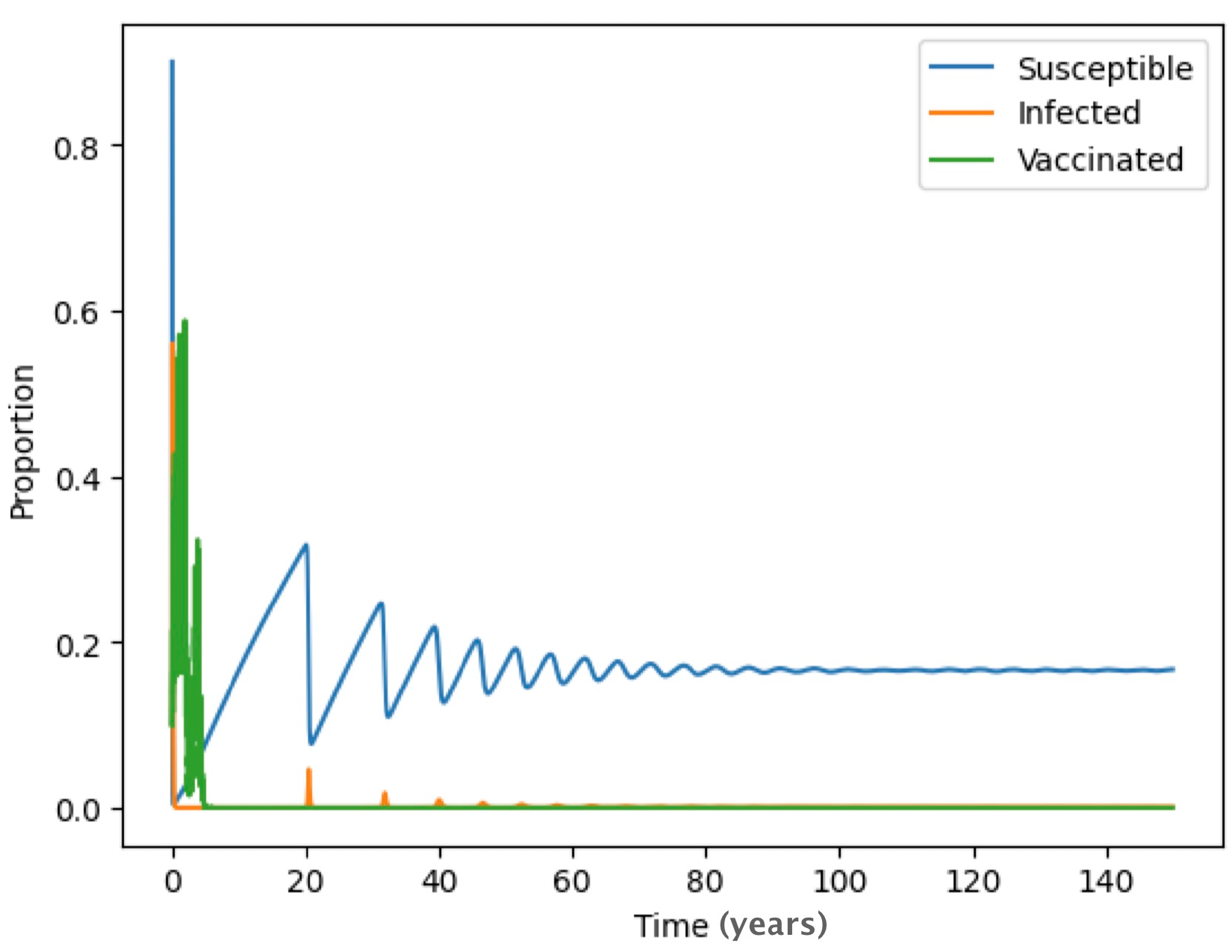}} 
     \subfigure[]{\includegraphics[width=8.5 cm]{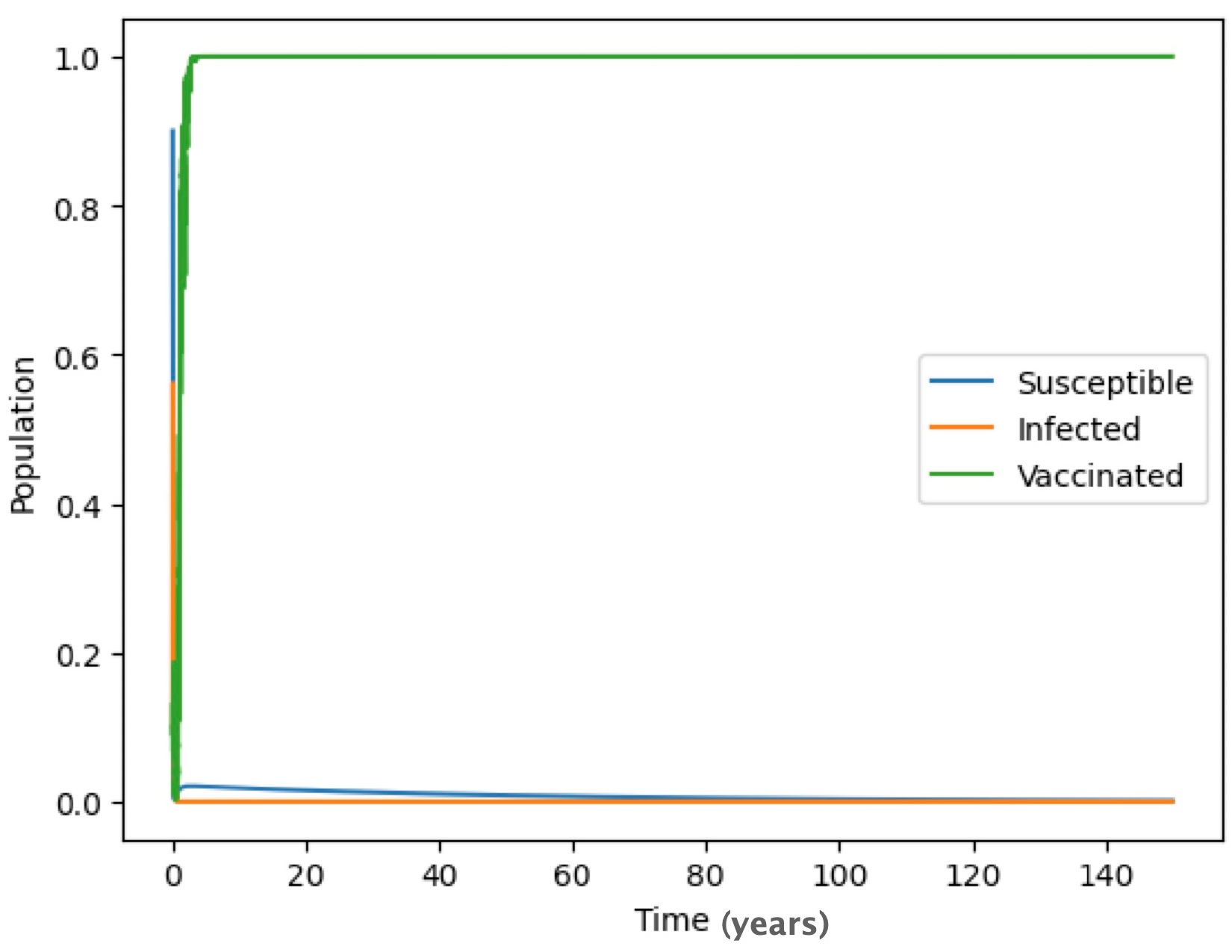}} 
     \subfigure[]{\includegraphics[width=8.5 cm]{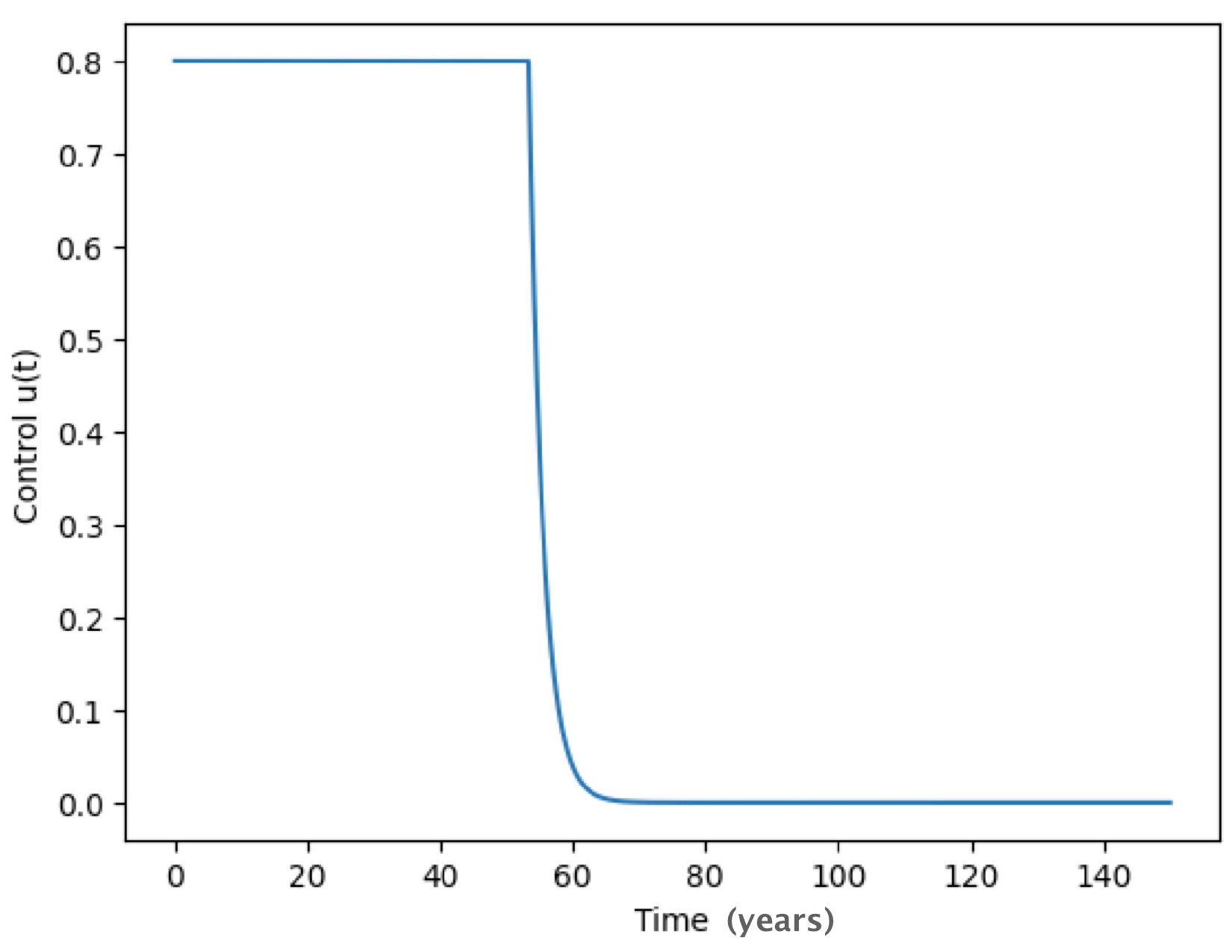}} 
    \caption{Simulation of the susceptible, infected, and vaccinated without control in (a) and with control in (b). (c) The control $u^*(t)$ is shown for $u_{max}=.8$, $T_f=150$, $\alpha_1=0$,  $\alpha_2=1000$, and $\alpha_3=100$. The rest of the parameter values are $\beta = 100$,  $\mu = 1/50$, $\gamma = 365/22$, $\kappa = 1.69$,  $\omega = 2$, $\delta = 0.1$, $\sigma_1^2=.01$, $\sigma_2^2=.5$ and $\sigma_3^2=1.4$. The initial values are $S(0)=.9$, $I(0)=.1$ and $x(0)=.1$. In that case, $R_0^s=6$.}
    \label{fig:fig6}
\end{figure}

It would be difficult to control the disease without that discount in the cost of vaccination, especially with large noise in the vaccination group's utility function. The reason is in two scenarios: the first scenario is when $\omega+\kappa\frac{\sigma_2^2-\sigma_3^2}{2}>I_0-\delta$ and $\omega+\kappa\frac{\sigma_2^2-\sigma_3^2}{2}>\delta$, in which vaccination is completely rejected and the disease becomes endemic; see Figure \ref{fig:fig2a} (b). The second scenario is where $\omega+\kappa\frac{\sigma_2^2-\sigma_3^2}{2}>I_0-\delta$ and $\omega+\kappa\frac{\sigma_2^2-\sigma_3^2}{2}<\delta$, in which case both disease eradication and disease endemicity become stationary; see Figure \ref{fig:fig2b}. In Figure \ref{fig:fig2b} (a) the disease dies, while in Figure \ref{fig:fig2b} (b) the disease persists while all the values of the parameters and initial conditions are the same.

\begin{figure}[H]
    \centering
     \subfigure[]{\includegraphics[width=8.5 cm]{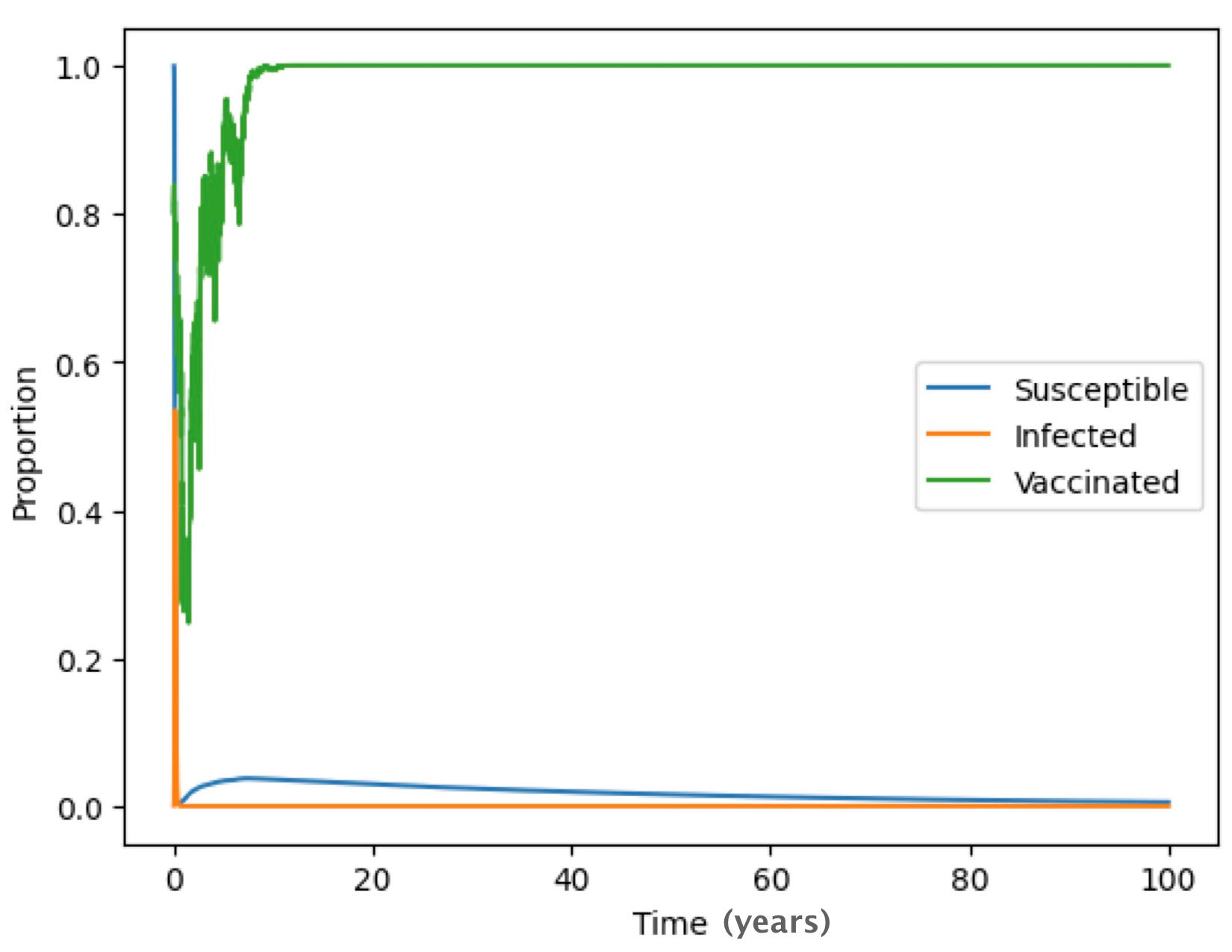}} 
     \subfigure[]{\includegraphics[width=8.5 cm]{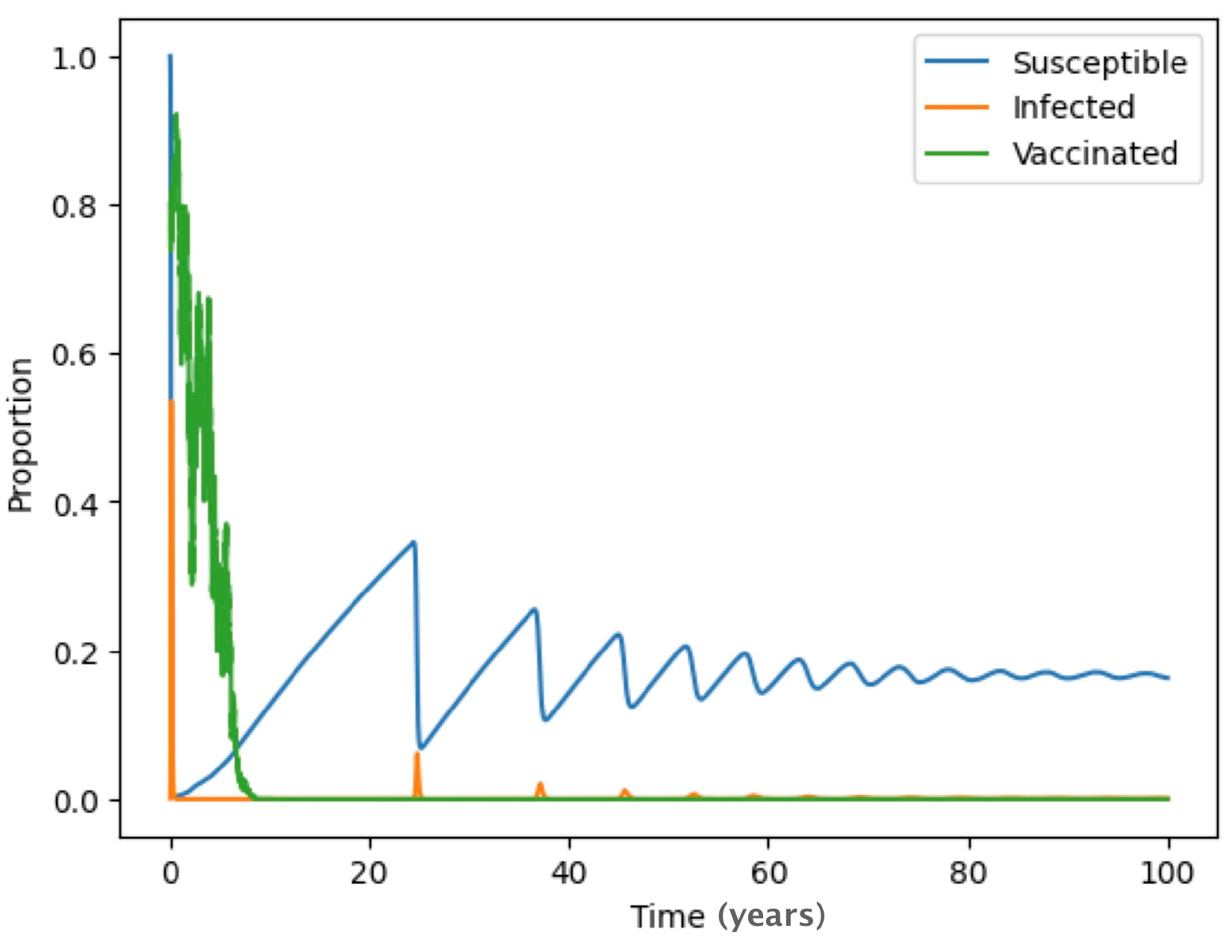}} 
    \caption{Simulation of the susceptible, infected, and vaccinated when $\sigma_2^2=.15$ and $\sigma_3^2=.2$ in (a) and (b) with initial $x(0)=.8$. The disease-free equilibrium of $\mathcal{E}_1$ type and the disease endemic equilibrium of $\mathcal{E}_4$ type are stable -- a Bernoulli stationary distribution. The rest of the parameter values are $\beta = 100$,  $\mu = 1/50$, $\gamma = 365/22$, $\kappa = 1.69$,  $\omega = 0.1$, $\delta = 0.5$, and $\sigma_1^2=.16$. In that case, $R_0^s=6$.}
    \label{fig:fig2b}
\end{figure}

As shown in Figure \ref{fig:fig2b}, when both parts of Theorem \ref{thm:vax} are true, namely $\omega+\kappa\frac{\sigma_2^2-\sigma_3^2}{2}>I_0-\delta$ and $\omega+\kappa\frac{\sigma_2^2-\sigma_3^2}{2}<\delta$, then the bistability of both equilibria $x=0$ and $x=1$ (impermanence) occurs. A further examination of the stability of both equilibria was performed using a simulation study. Figure \ref{fig:fig2} shows the proportion of times $x(t)$ converges to $0$ and to $1$ in $200$ stochastic simulations of the model at different values of $\sigma_2^2$ and $\sigma_3^2$ with initial values $x(0)=0.1,0.2,\ldots,0.9$. The bistability region shows a Bernoulli stationary distribution whose probabilities depend on the noises in the perceived utilities $\sigma_2^2$ and $\sigma_3^2$, as well as the initial acceptance of vaccination $x(0)$. The size of the bistability region could be seen to depend on $x(0)$ as shown through the panels of Figure \ref{fig:fig2}. We notice that as initial acceptance of the vaccine $x(0)$ increases, the region in which full vaccination is stable in the $\sigma_2^2-\sigma_3^2$ plane expands. In other words, noises are more detrimental to vaccination campaigns if initial vaccine acceptance is low.
\newpage
\begin{figure}[H]
\begin{center}

\includegraphics[width=0.5\textwidth]{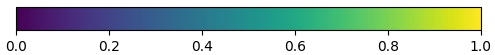} 
\begin{tabular}{|c|c|c|} \hline 
$x(0)=0.1$    & $x(0)=0.2$  & $x(0)=0.3$ \\
\includegraphics[width=0.3\textwidth]{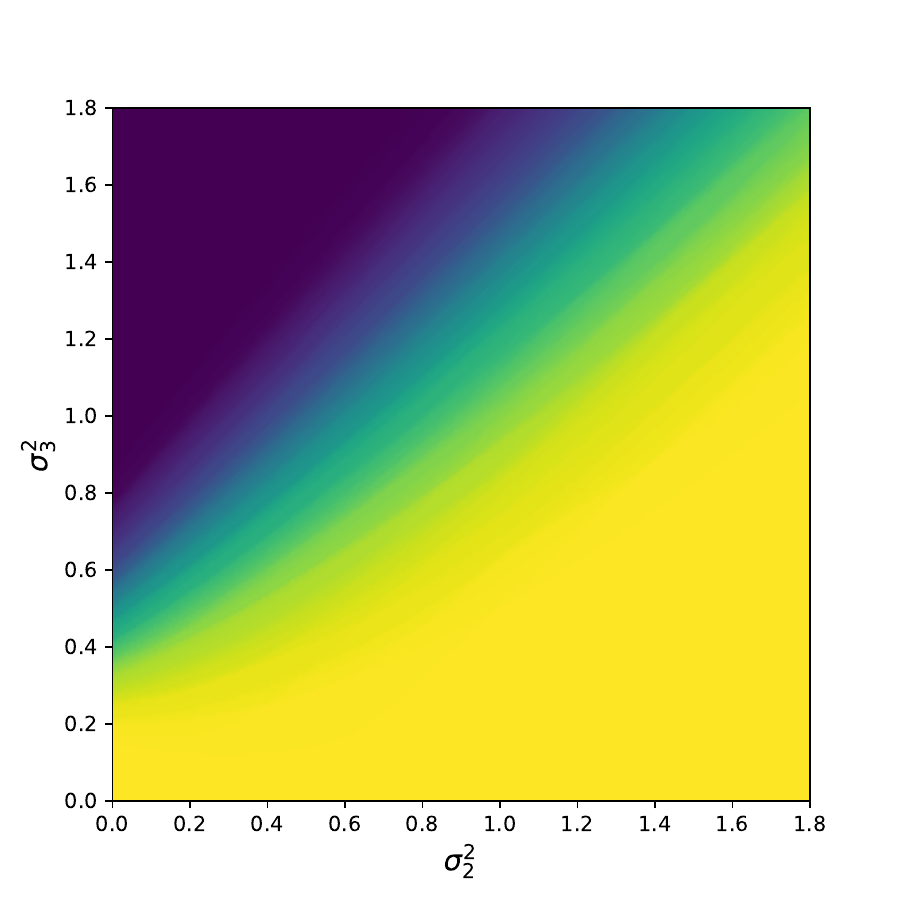}     & \includegraphics[width=0.3\textwidth]{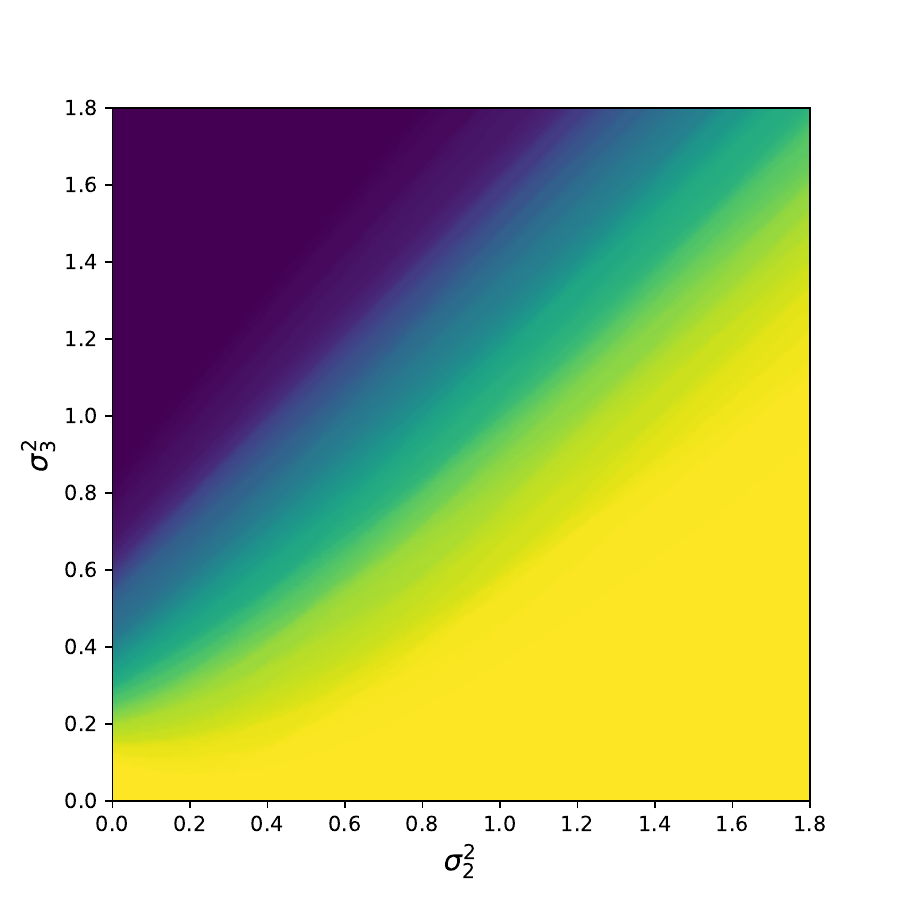}  & \includegraphics[width=0.3\textwidth]{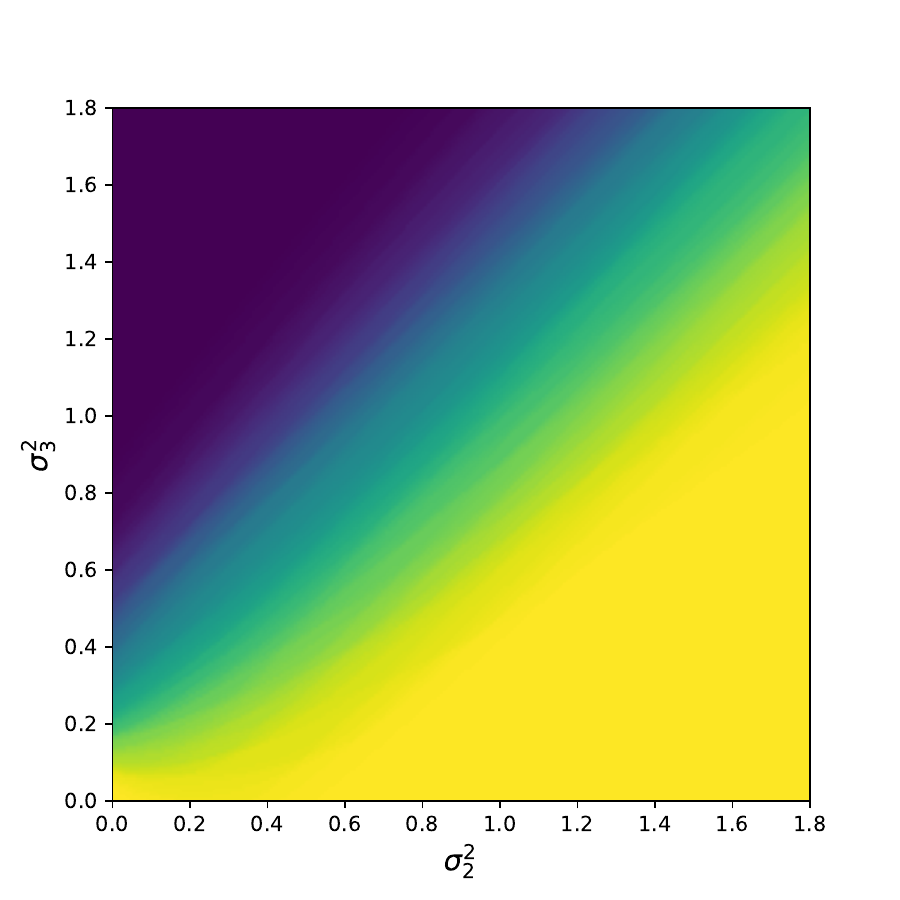} \\[2ex]
\hline
$x(0)=0.4$    &  $x(0)=0.5$  &  $x(0)=0.6$ \\
\includegraphics[width=0.3\textwidth]{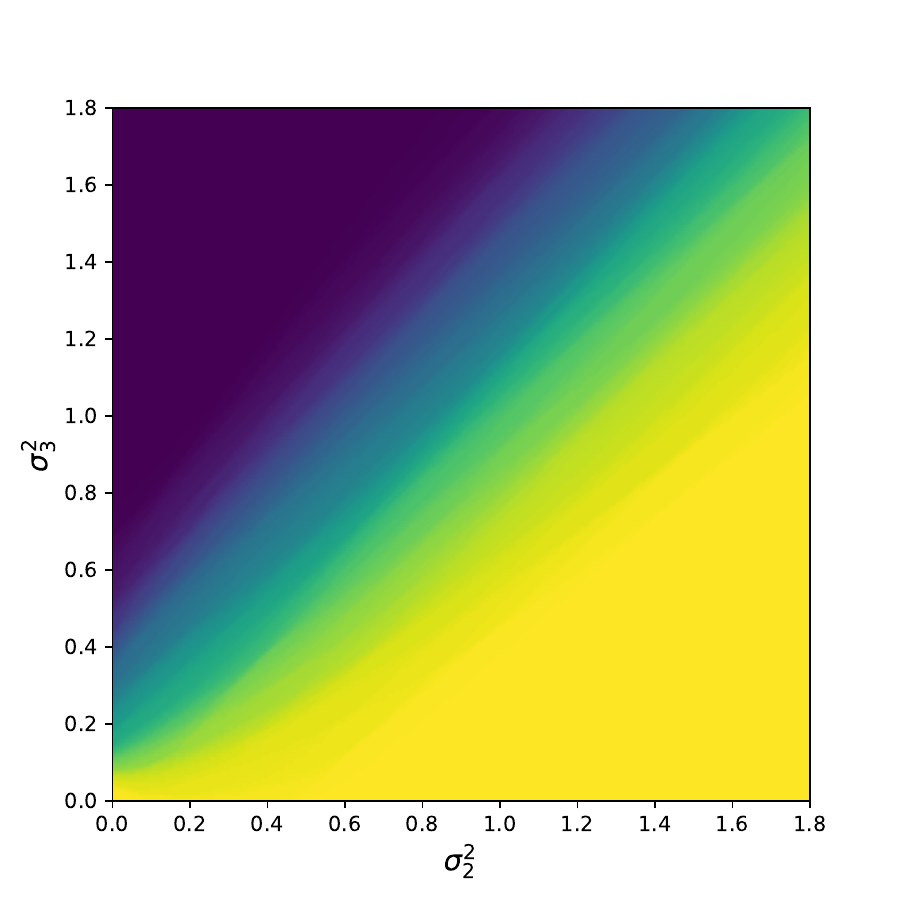}     & \includegraphics[width=0.3\textwidth]{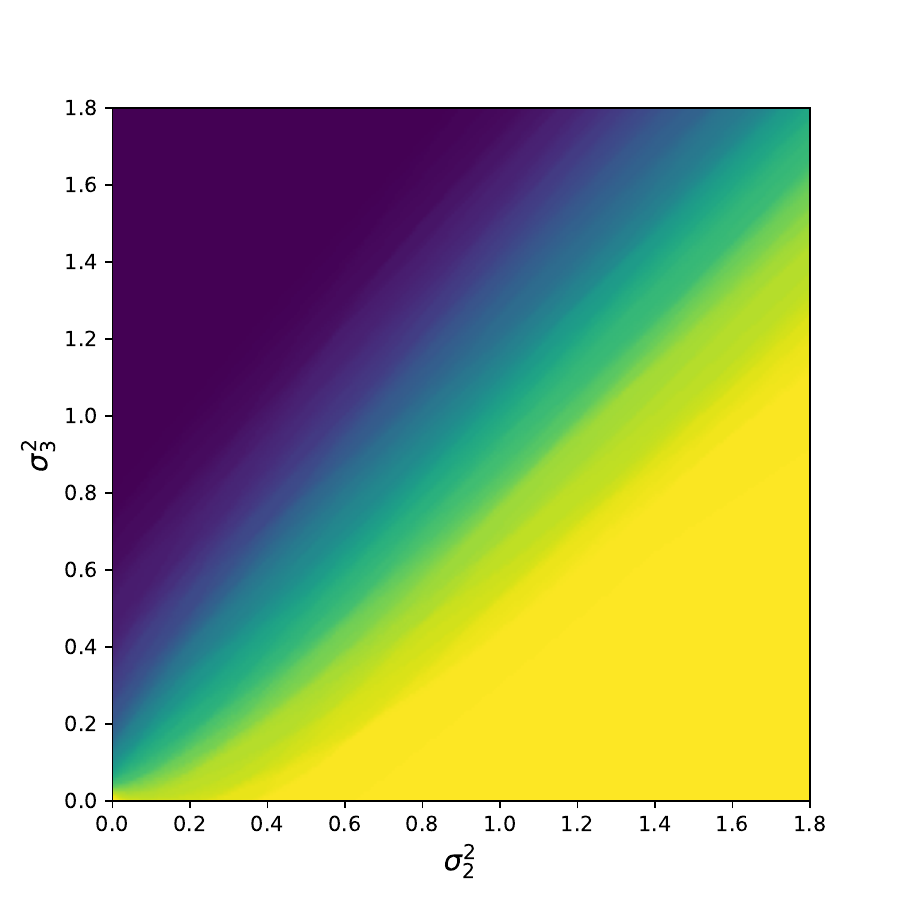}  & \includegraphics[width=0.3\textwidth]{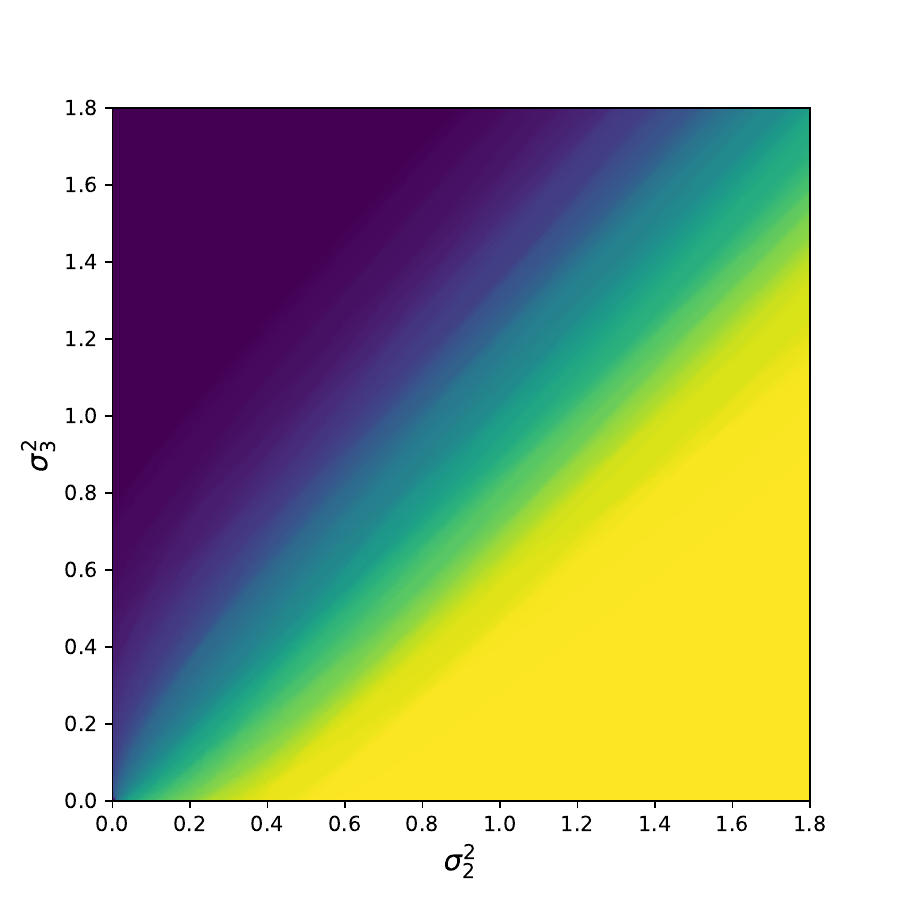} \\[2ex]
\hline
$x(0)=0.7$    & $x(0)=0.8$  &  $x(0)=0.9$ \\
\includegraphics[width=0.3\textwidth]{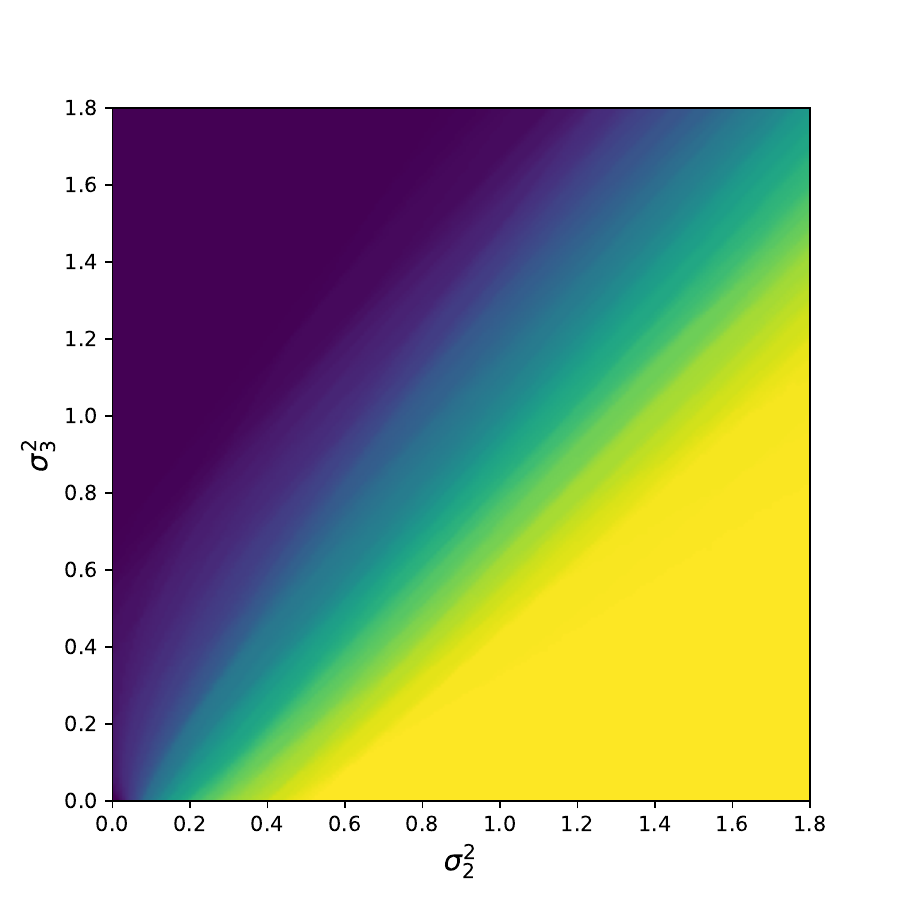}     & \includegraphics[width=0.3\textwidth]{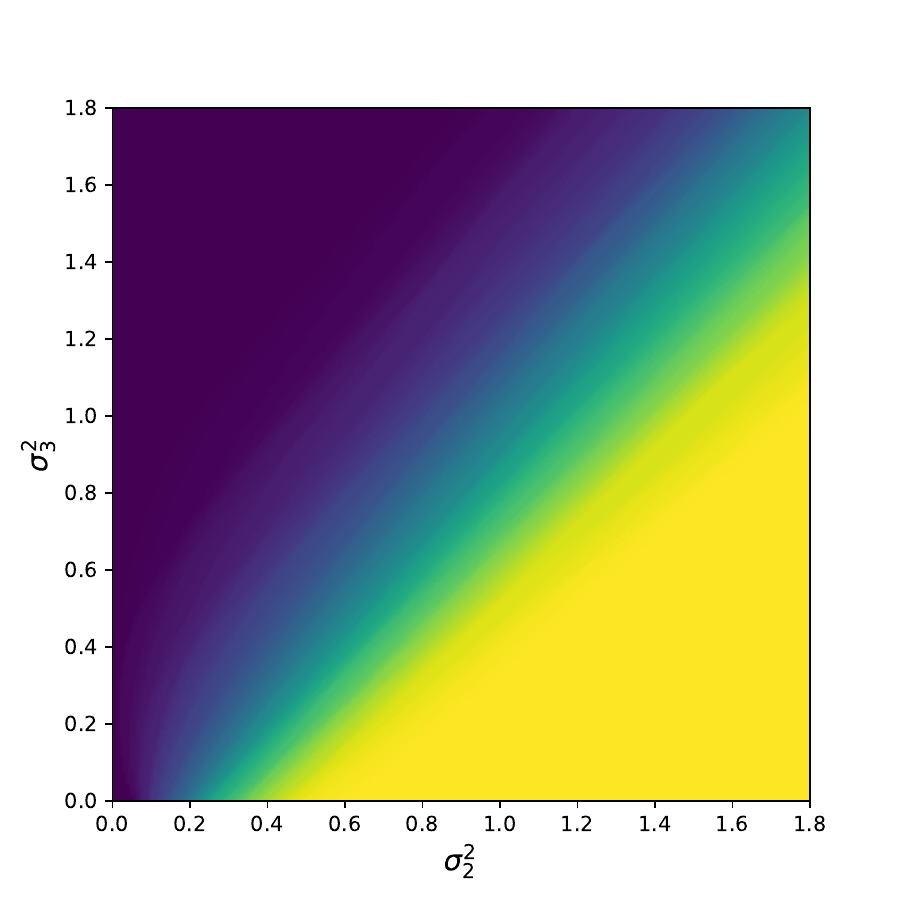}  & \includegraphics[width=0.3\textwidth]{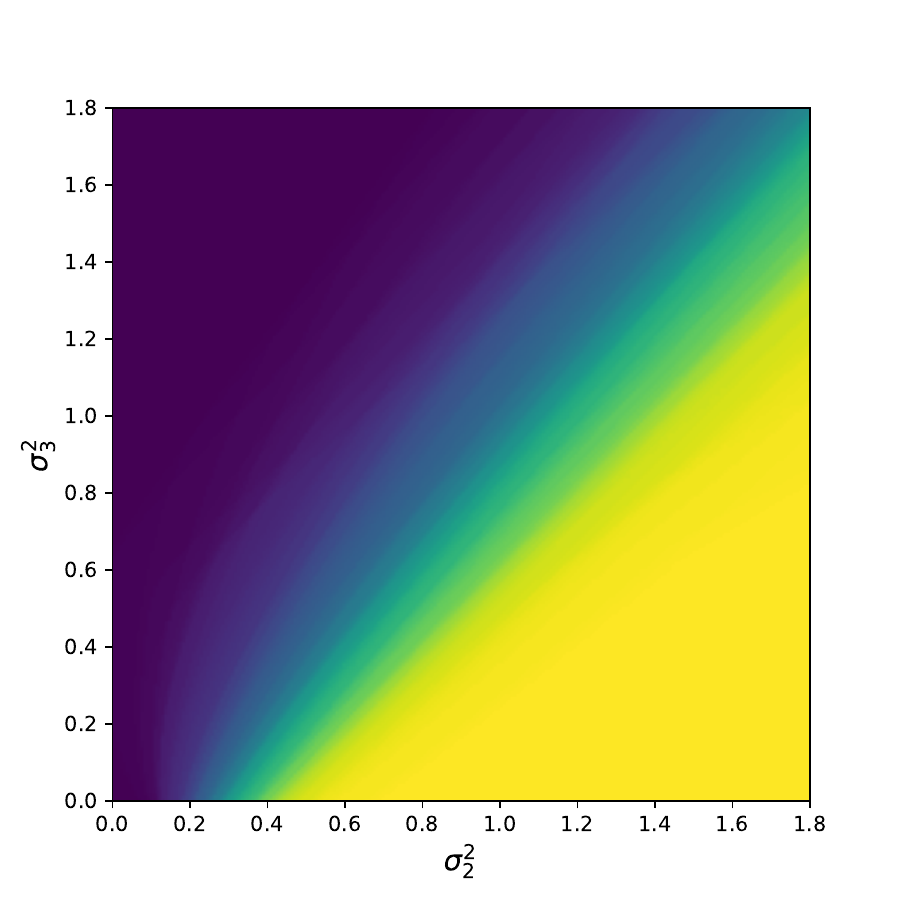} \\
\hline

\end{tabular}
    \caption{ Estimate of the probability $P\left(\lim_{t\to\infty}x(t)=0\vert x(0)\right)$ using $200$ simulation runs at $T=100$, $dt = 0.001$ and different initial values of $x(0)$, with parameter values $\beta = 100$,  $\mu = 1/50$, $\gamma = 365/22$, $\kappa = 1.69$,  $\omega = 0.1$, $\delta = 0.5$, $\sigma_1^2 = .16$, and initial values $S(0)=0.4$ and $I(0)=0.4$.}\label{fig:fig2}
\end{center}
\end{figure}

The following two theorems are generalizations to the disease dynamics in \cite{oraby1}.
The following theorem identifies the boundaries of the limiting temporal means of processes when $R_0^s>1$. It also shows the case when the disease cannot be eradicated at a low vaccination level; namely when its temporal mean of vaccine acceptance/uptake is below $1-\frac{1}{R_0^s}$.

\begin{theorem}\label{thm23}
If $R_0^s>1$ and $\bar{x}(t)\to x_0$, then a disease-endemic equilibrium will persist in the temporal mean. That is
\begin{enumerate}
\item For all $x_0\in [0,1]$, 
\begin{equation} \label{endemic1} 
 \dfrac{\mu\,(1-x_0)}{\mu+\beta} \leq \liminf_{t\to \infty} \bar{S}(t)\leq \limsup_{t\to \infty} \bar{S}(t)\leq 1-x_0
\end{equation} and
    \item If $x_0<1-\frac{1}{R_0^s}$, then \begin{equation}\label{endemic2}
    \liminf_{t \to \infty} \bar{I}(t)\geq \frac{\mu}{\mu+\gamma}[1-\frac{1}{R_0^s}-x_0]\end{equation} 
    and 
    \begin{equation}\label{upS}
    \limsup_{t \to \infty} \bar{S}(t)\leq \frac{1}{R_0^s}.
\end{equation}
Equation \eqref{upS} holds true also when $x_0\geq 1-\frac{1}{R_0^s}$ from the upper inequality in Part (1).
    \item If $x_0<\frac{\beta}{\beta-\sigma_1^2}(1-\frac{1}{R_0^s})$ and $\beta>\sigma_1^2$ then \begin{equation}\label{endemic3}
    \limsup_{t\to \infty} \bar{I}(t)\leq \frac{\mu}{\mu+\gamma}[\frac{\beta}{\beta-\sigma_1^2}(1-\frac{1}{R_0^s})-x_0].
    \end{equation} 
\end{enumerate}
\end{theorem}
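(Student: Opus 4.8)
The plan is to convert each stochastic equation into a statement about time averages by integrating over $[0,t]$, dividing by $t$, and letting $t\to\infty$. Three facts make every boundary and noise contribution disappear: the solution stays in $\mathcal{S}$, so $S,I,x\in[0,1]$ are bounded and $\frac1t(Y(t)-Y(0))\to 0$ for $Y\in\{S,I,S+I\}$; the quadratic variations $\sigma_1^2\int_0^t (SI)^2\,du$ and $\sigma_1^2\int_0^t S^2\,du$ are both $O(t)$, so the martingale strong law (Lemma~\ref{martin}, as already used for Theorem~\ref{thm:diseasefree}) gives $\frac1t\int_0^t \sigma_1 SI\,dW_1\to0$ and $\frac{M(t)}{t}\to0$ almost surely; and $\bar{x}(t)\to x_0$ by hypothesis. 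I would record three ``master'' relations: (i) the averaged $S$-equation $\frac1t(S(t)-S(0))=\mu(1-\bar x)-\beta\,\overline{SI}-\mu\bar S-\frac1t\int_0^t\sigma_1 SI\,dW_1$; (ii) the \emph{conservation relation} obtained by adding the $S$- and $I$-equations, in which the noise cancels, giving the noise-free identity $\mu\bar S(t)+(\mu+\gamma)\bar I(t)=\mu(1-\bar x(t))-\frac1t\big((S+I)(t)-(S+I)(0)\big)$; and (iii) the averaged $\log I$ relation from the It\^o computation in the proof of Theorem~\ref{thm:diseasefree}, namely $\frac1t\log\frac{I(t)}{I(0)}=\beta\bar S(t)-(\mu+\gamma)-\tfrac12\sigma_1^2\,\overline{S^2}(t)+\frac{M(t)}{t}$.

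For Part (1), the upper bound $\limsup\bar S\le1-x_0$ is immediate from relation (ii): since $(\mu+\gamma)\bar I\ge0$, it gives $\mu\bar S(t)\le\mu(1-\bar x(t))-\frac1t((S+I)(t)-(S+I)(0))$, and taking $\limsup$ kills the boundary term. The lower bound follows from relation (i): because $I\le1$ we have $\overline{SI}\le\bar S$, so $\frac1t(S(t)-S(0))\ge\mu(1-\bar x)-(\mu+\beta)\bar S-\frac1t\int_0^t\sigma_1 SI\,dW_1$; rearranging and taking $\liminf$, with both the boundary and martingale terms vanishing, yields $\liminf\bar S\ge\frac{\mu(1-x_0)}{\mu+\beta}$.

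For Part (2), I would extract $\limsup\bar S\le1/R_0^s$ from relation (iii) using only the elementary bounds $\overline{S^2}\le1$ (as $S\le1$) and $\limsup\frac1t\log I(t)\le0$ (as $0<I\le1$): this gives $\beta\bar S(t)\le\frac1t\log\frac{I(t)}{I(0)}+(\mu+\gamma)+\tfrac12\sigma_1^2-\frac{M(t)}{t}$, and $\limsup$ leaves $\beta\limsup\bar S\le\mu+\gamma+\tfrac12\sigma_1^2$, i.e. \eqref{upS}. Feeding this into the conservation relation (ii), which reads $(\mu+\gamma)\bar I(t)=\mu(1-\bar x)-\mu\bar S-\frac1t((S+I)(t)-(S+I)(0))$, and taking $\liminf$ produces $(\mu+\gamma)\liminf\bar I\ge\mu(1-x_0)-\mu\limsup\bar S\ge\mu(1-\tfrac1{R_0^s}-x_0)$, which is \eqref{endemic2}; the hypothesis $x_0<1-1/R_0^s$ is exactly what makes this lower bound strictly positive. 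The closing remark of Part (2), that \eqref{upS} also holds when $x_0\ge1-1/R_0^s$, is just the upper inequality of Part (1).

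The upper bound on $\bar I$ in Part (3) is the delicate one, and here I would replace the crude $\overline{S^2}\le1$ by the tangent-line inequality $S^2\ge2S-1$ (equivalently $(S-1)^2\ge0$), so that $\overline{S^2}\ge2\bar S-1$. Substituting into relation (iii) and using $\beta>\sigma_1^2$ gives $(\beta-\sigma_1^2)\bar S(t)\ge\frac1t\log\frac{I(t)}{I(0)}+(\mu+\gamma)-\tfrac12\sigma_1^2-\frac{M(t)}{t}$; combining this lower bound on $\bar S$ with the conservation relation (ii) and simplifying (using $\tfrac{1}{R_0^s}=\tfrac{\mu+\gamma+\frac12\sigma_1^2}{\beta}$) reproduces exactly the claimed $\limsup\bar I\le\frac{\mu}{\mu+\gamma}\big[\frac{\beta}{\beta-\sigma_1^2}(1-\tfrac1{R_0^s})-x_0\big]$, the hypothesis on $x_0$ guaranteeing the right-hand side is nonnegative. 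The main obstacle is that this chain needs $\liminf\frac1t\log I(t)\ge0$, that is $\frac1t\log I(t)\to0$, whereas boundedness gives only the easy half $\limsup\frac1t\log I(t)\le0$. The hard part is therefore ruling out exponential decay of $I$ in the endemic regime; I would handle it by a dichotomy: if $\limsup\frac1t\log I(t)<0$ then $I\to0$ and $\bar I\to0$, so the bound holds trivially because its right-hand side is positive, while persistence forces $\lim\frac1t\log I(t)=0$ and the computation above goes through. Making this persistence step fully rigorous, rather than invoking the generic endemic behaviour, is where the real work lies.
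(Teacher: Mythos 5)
Parts (1) and (2) of your proposal are correct. Part (1) coincides with the paper's own argument: the conservation relation gives the upper bound, and the $S$-equation with $I\le 1$ plus the martingale strong law (Lemma~\ref{martin}) gives the lower bound. In Part (2) you take a genuinely different and arguably cleaner route: you obtain \eqref{upS} directly from the averaged $\log I$ identity using only $\overline{S^2}(t)\le 1$ and $\limsup_{t\to\infty}\frac{1}{t}\log I(t)\le 0$ --- unconditionally in $x_0$, which also disposes of the theorem's closing remark in one stroke --- and then read \eqref{endemic2} off the conservation relation. The paper goes in the opposite order: it first proves \eqref{endemic2} by applying its comparison Lemma~\ref{Ap1} to the inequality $\log I(t)\ge a(t)-\beta(1+\frac{\gamma}{\mu})\int_0^t I(s)\,ds$, and only then deduces \eqref{upS} from the conservation identity. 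Your order avoids Lemma~\ref{Ap1} entirely in this part; both are valid.

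Part (3) has a genuine gap, exactly where you flagged it, and your dichotomy does not close it. Your chain needs $\liminf_{t\to\infty}\frac{1}{t}\log I(t)\ge 0$. The negation of your Case A is $\limsup_{t\to\infty}\frac{1}{t}\log I(t)=0$, which is perfectly compatible with $\liminf_{t\to\infty}\frac{1}{t}\log I(t)<0$: a priori $I$ can make long excursions to exponentially small values and recover, so that $\frac{1}{t}\log I(t)$ oscillates and has no limit. The assertion that ``persistence forces $\lim_{t\to\infty}\frac{1}{t}\log I(t)=0$'' is therefore not one horn of the dichotomy but precisely the unproven claim. Nor can subsequences rescue the argument: along a subsequence where $\frac{1}{t_n}\log I(t_n)$ is very negative you have no control of $\bar I(t_n)$, and the subsequence realizing $\limsup_{t\to\infty}\bar I(t)$ need not be one along which $\frac{1}{t}\log I(t)$ behaves.

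The missing ingredient is the paper's Lemma~\ref{Ap1}, a pathwise comparison lemma: if $\log f(t)\le a(t)-b\int_0^t f(s)\,ds$ with $a(t)/t\to a>0$ a.s.\ and $b>0$, then $\limsup_{t\to\infty}\frac{1}{t}\int_0^t f(s)\,ds\le a/b$ a.s. Its proof integrates the differential inequality for $e^{\int_0^t f}$ directly, so it requires no information whatsoever about the limit of $\frac{1}{t}\log f(t)$ --- which is exactly what defeats your approach. Your tangent-line inequality slots into it perfectly: substituting your conservation relation (ii), $\bar S(t)=1-\bar x(t)-(1+\frac{\gamma}{\mu})\bar I(t)-\frac{1}{\mu t}\left((S+I)(t)-(S+I)(0)\right)$, into $\frac{1}{t}\log\frac{I(t)}{I(0)}\le(\beta-\sigma_1^2)\bar S(t)-(\mu+\gamma-\frac12\sigma_1^2)+\frac{M(t)}{t}$ gives $\log I(t)\le a(t)-b\int_0^t I(s)\,ds$ with $a(t)/t\to(\beta-\sigma_1^2)(1-x_0)-(\mu+\gamma-\frac12\sigma_1^2)$ and $b=(\beta-\sigma_1^2)(1+\frac{\gamma}{\mu})$; Lemma~\ref{Ap1} then yields exactly \eqref{endemic3}, and the hypothesis $x_0<\frac{\beta}{\beta-\sigma_1^2}(1-\frac{1}{R_0^s})$ is equivalent to the required positivity of $a$. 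This is in fact tidier than the paper's own Part (3), which keeps the quadratic term via $(\bar S)^2\le\overline{S^2}$ and expands the square. But without Lemma~\ref{Ap1} (or an equivalent comparison result), your Part (3) remains incomplete.
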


\begin{proof}
Since
\begin{equation*} 
\frac{d (S+I)(t)}{dt}=\mu(1-x(t))-\mu(S+I)(t)-\gamma I(t) 
\end{equation*}
then
\begin{equation}\label{bareq}
   \bar{S}(t)=1-\phi(t)-\bar{x}(t)-(1+\frac{\gamma}{\mu}) \bar{I}(t)
\end{equation}
where $\phi(t)=\frac{1}{\mu t}\left(S(t)+I(t)-S(0)-I(0) \right)$. Notice that $\lim_{t\to \infty} \phi(t)=0$ $a.s.$ Since, $\bar{S}(t)\leq 1-\phi(t)-\bar{x}(t)$ for all $t>0$. Therefore, 
$$\limsup_{t\to \infty} \bar{S}(t)\leq 1-x_0.$$

From equation \eqref{SIR} 
$$ 
\frac{dS}{dt} = \mu\,(1-x)- \beta \, S\,I-\mu \, S-\sigma_1 S\,I\,\dot{W}_1   \geq \mu\,(1-x)- (\beta +\mu) \, S-\sigma_1 S\,I\,\dot{W}_1.$$
Then,
$$ 
\frac{S(t)-S(0)}{t}  \geq \mu\,(1-\bar{x}(t))- (\beta +\mu) \, \bar{S}(t)-\frac{1}{t} M_3(t),$$
where $M_3(t)=\int_0^t \sigma_1 S(u)\,I(u)\,d{W}_1(u)$. Notice that $M_3(t)$ is a local continuous martingale with $M_3(0)=0$ and $E(M_3(t)^2)=\sigma_1^2 \int_0^t (S(u) I(u))^2 du\leq \sigma_1^2 t$. Since $\limsup_{t\to \infty} \dfrac{E(M_3(t)^2)}{t}\leq \sigma_1^2<\infty$, then by the Strong Law of Large Numbers, $\limsup_{t\to \infty} \dfrac{M_3(t)}{t}=0$ almost surely, see \cite{mao2007stochastic} and Lemma \ref{martin} in the Appendix.
But since $x_0<1$, and $S(t)-S(0)\leq 1$ then \eqref{endemic1} follows.

To prove part (2), note that
\begin{eqnarray*}
\dfrac{d\log(I(t))}{dt}&=& \beta S(t)-(\mu+\gamma +\frac12 \sigma_1^2) +\frac12 \sigma_1^2(1-S(t)^2) +\sigma_1 S\,\dot{W}_1\\
&\geq& \beta S(t)-(\mu+\gamma +\frac12 \sigma_1^2) +\sigma_1 S\,\dot{W}_1
\end{eqnarray*}
Thus
\begin{eqnarray*}
\dfrac{\log(I(t))-\log(I(0))}{t}
&\geq& \beta [1-\phi(t)-\bar{x}(t)-(1+\frac{\gamma}{\mu}) \bar{I}(t)]-(\mu+\gamma +\frac12 \sigma_1^2) +\frac{1}{t} M(t)
\end{eqnarray*}
and so
\begin{eqnarray*}
\dfrac{\log(I(t))}{t}
&\geq& \dfrac{\log(I(0))}{t}+\beta [1-\phi(t)-\bar{x}(t)-(1+\frac{\gamma}{\mu}) \bar{I}(t)]-(\mu+\gamma +\frac12 \sigma_1^2) +\frac{1}{t} M(t).
\end{eqnarray*}
Thus by Lemma \ref{Ap1} in the Appendix,
\begin{equation*}
    \liminf_{t \to \infty} \bar{I}(t)\geq \frac{\mu}{\mu+\gamma}[1-\frac{1}{R_0^s}-x_0]
\end{equation*}
if $x_0<1-\frac{1}{R_0^s}$. In which case, since
\begin{equation*}
   \bar{S}(t)=1-\phi(t)-\bar{x}(t)-(1+\frac{\gamma}{\mu}) \bar{I}(t).
\end{equation*}
Then
\begin{equation*}
    \limsup_{t \to \infty} \bar{S}(t)\leq \frac{1}{R_0^s}
\end{equation*}
Notice that $\frac{1}{R_0^s}>\frac{1}{R_0}$ so it is more than its deterministic counterpart. 

To prove part (3), similarly, if $x_0<\frac{\beta}{\beta-\sigma_1^2}(1-\frac{1}{R_0^s})$ and $\beta>\sigma_1^2$ then 
    \begin{equation*}\label{endemic3S}
    \liminf_{t\to \infty} \bar{S}(t)\geq 1-\frac{\beta}{\beta-\sigma_1^2}(1-\frac{1}{R_0^s})
    \end{equation*} 

Again,
\begin{equation*}
d\log(I(t))= (\beta \, S-(\mu+\gamma) -\frac12 \sigma_1^2 S^2) dt+\sigma_1 S  dW_1(t)
\end{equation*}
and by equation \eqref{bareq}
\begin{equation*}
\frac{1}{t}\log(I(t))-\frac{1}{t}\log(I(0))= \beta \, \bar{S}(t)-(\mu+\gamma) -\frac12 \sigma_1^2 \overline{S^2}(t) +\frac{1}{t}M(t)
\end{equation*}
where $M(t)=\sigma_1 \int_0^t S(u)  dW_1(u)$ is a local continuous martingale, with $M(0)=0$, see \cite{mao2007stochastic} and Lemma \ref{martin} in the Appendix. Since, $(\bar{S}(t))^2\leq \overline{S^2}(t)$, then
\begin{align*}
\frac{1}{t}\log(I(t))&\leq \frac{1}{t}\log(I(0))+\beta \, [1-\phi(t)-\bar{x}(t)-(1+\frac{\gamma}{\mu}) \bar{I}(t)]-(\mu+\gamma) \\ &-\frac12 \sigma_1^2 [1-\phi(t)-\bar{x}(t)-(1+\frac{\gamma}{\mu}) \bar{I}(t)]^2 +\frac{1}{t}M(t) \\ &\leq \Psi(t)+\beta \, [1-\bar{x}(t)]-(\mu+\gamma)-\beta \,(1+\frac{\gamma}{\mu}) \bar{I}(t)-\frac12 \sigma_1^2 (1-\phi(t))^2 \\
& +\sigma_1^2 (\bar{x}(t)+(1+\frac{\gamma}{\mu}) \bar{I}(t))-\frac12 \sigma_1^2 (\bar{x}(t)+(1+\frac{\gamma}{\mu}) \bar{I}(t))^2 \\ &\leq \Xi(t)-(\beta-\sigma_1^2) \,(1+\frac{\gamma}{\mu}) \bar{I}(t)
\end{align*}
where
\begin{equation*}
    \Xi(t)=\Psi(t)+\beta \, [1-\bar{x}(t)]-(\mu+\gamma)-\frac12 \sigma_1^2 (1-\phi(t))^2+\sigma_1^2 \bar{x}(t)
\end{equation*}
and
\begin{align*}
    \Psi(t)&=\frac{1}{t}\log(I(0))-\beta\phi(t)-\sigma_1^2 \phi(t)(\bar{x}(t)+(1+\frac{\gamma}{\mu}) \bar{I}(t))
    +\frac{1}{t}M(t).
\end{align*}
If we show that $\lim_{t\to \infty}\Xi(t)=\beta(1-\frac{1}{R_0^s})-(\beta-\sigma_1^2)x_0$ $a.s.$,
then Lemma \ref{Ap1} in the Appendix will imply that
$$\limsup_{t\to \infty} \bar{I}(t)\leq \frac{\mu}{\mu+\gamma}[\frac{\beta}{\beta-\sigma_1^2}(1-\frac{1}{R_0^s})-x_0]$$ 
if $x_0<\frac{\beta}{\beta-\sigma_1^2}(1-\frac{1}{R_0^s})$. But $\lim_{t\to \infty}\frac{1}{t}\log(I(0))=0$ $a.s.$ and $\lim_{t\to \infty}\frac{1}{t}M(t)=0$ $a.s.$ Finally,
$$\vert -\beta\phi(t)-\sigma_1^2 \phi(t)(\bar{x}(t)+(1+\frac{\gamma}{\mu}) \bar{I}(t)) \vert\leq \beta\vert \phi(t)\vert+\sigma_1^2 \vert\phi(t) \vert (1+(1+\frac{\gamma}{\mu}) )$$ and $\lim_{t\to \infty}\phi(t)=0$ $a.s.$ complete this part.
\end{proof}

In \cite{oraby1}, where the deterministic version of the model is studied, the partial vaccination and persistence of the disease equilibrium $
\displaystyle \mathcal{E}_5' \equiv \left(\frac{1}{R_0},\frac{\mu}{\mu+\gamma}\left(1-\frac{1}{R_0}-x_5'\right),x_5'\right)$, where $$x_5'=\dfrac{\mu\left(1-\dfrac{1}{R_0}\right)-\left(\delta+\omega\right)(\mu+\gamma)}{\mu-2\delta(\mu+\gamma)} $$
is locally asymptotically stable when 
\begin{equation}\label{new_ineq}
    \delta(1-\frac{2}{R_0}) <\omega<-\delta+\frac{\mu}{\mu+\gamma}\,(1-\frac{1}{R_0})
\end{equation}
The left hand side of inequality \eqref{new_ineq} is equivalent to $x_5'<1-\frac{1}{R_0}$, the existence condition of $\mathcal{E}_5'$. Part 2 of Theorem \ref{thm23}, exhibits a similar result in which if the temporal mean of $x(t)$ converges to $x_0<1-\frac{1}{R_0^s}$, then the disease persists as in the deterministic model even when $x_0>0$. In which case, parts 2 and 3 of Theorem \ref{thm23} give \begin{equation}\label{endemic4}
   0<\frac{\mu}{\mu+\gamma}[1-\frac{1}{R_0^s}-x_0] \leq \liminf_{t \to \infty} \bar{I}(t)  \leq \limsup_{t \to \infty} \bar{I}(t) \leq \frac{\mu}{\mu+\gamma}[\frac{\beta}{\beta-\sigma_1^2}(1-\frac{1}{R_0^s})-x_0]. 
   \end{equation} 
Moreover, if $\sigma_1^2=0$ while $R_0>1$, then \begin{equation}\label{endemic5}
    \lim_{t \to \infty} \bar{I}(t) =\frac{\mu}{\mu+\gamma}[1-\frac{1}{R_0}-x_0]. 
   \end{equation}    
When $x_0=0$, the disease persists in a way similar to $\mathcal{E}_4$.

When $R_0^s>1$ some non-deterministic limits of the system occur under the condition that $\dfrac{\sigma_1^2}{\beta}<\dfrac{R_0}{2}$; see, e.g. \cite{Lahrouz2014}. Even when $\sigma_1^2=0$ and $R_0>1$, in view of equation \eqref{endemic5}, disease prevalence still fluctuates; see Figure \ref{fig:fig4} (a). The following theorem provides boundaries for those stochastic fluctuations. 

\begin{theorem}\label{thm24}
 If $R_0^s>1$, and $\dfrac{\sigma_1^2}{\beta}<\dfrac{R_0}{2}$, then
\begin{enumerate}
    \item $ \liminf_{t\to \infty} S(t)\leq s_d \leq \limsup_{t\to \infty} S(t)\leq 1-x_*$ a.s.
    \item $\liminf_{t\to \infty} I(t)\leq (1-s_d-x_*)\frac{\mu}{\mu+\gamma} $ a.s.
    \item $(1-s_d-x^*)\frac{\mu}{\mu+\gamma} \leq \limsup_{t\to \infty} I(t) \leq (1-s_d)$ a.s.
\end{enumerate}
where $s_d:=\frac{1}{R_0}\,\frac{2}{1+\sqrt{1-\frac{2\sigma_1^2}{\beta R_0}}}\geq \frac{1}{R_0}$. Note that the first part implies that $x_*\leq 1- s_d$.
\end{theorem}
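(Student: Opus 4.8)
The plan is to derive all six inequalities from two structural facts already visible in the proof of Theorem~\ref{thm23}: the logarithmic equation for $I$ and the cancellation of noise in $S+I$. It\^o's formula (Lemma~\ref{ito}) gives $d\log I = f(S)\,dt + \sigma_1 S\,dW_1$ with $f(S)=\beta S-(\mu+\gamma)-\tfrac12\sigma_1^2 S^2$. This $f$ is concave, and the hypothesis $\sigma_1^2/\beta<R_0/2$ is exactly the statement that its discriminant is positive; rationalizing the smaller root of $f$ reproduces $s_d=\tfrac{1}{R_0}\tfrac{2}{1+\sqrt{1-2\sigma_1^2/(\beta R_0)}}$. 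I would record at the outset that $f>0$ on $(s_d,s_d')$ and $f<0$ outside, where $s_d'$ is the larger root, so that the sign of the drift of $\log I$ is controlled entirely by the position of $S$ relative to $s_d$.

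For the upper bound $\limsup_{t\to\infty}S(t)\le 1-x_*$ I would use that the noise cancels in $d(S+I)=[\mu(1-x)-\mu(S+I)-\gamma I]\,dt$. Discarding $-\gamma I\le 0$ gives $\tfrac{d}{dt}(S+I)\le \mu(1-x)-\mu(S+I)$, and since $x(t)\ge x_*-\epsilon$ eventually, a linear (Gronwall) comparison yields $\limsup(S+I)\le 1-x_*+\epsilon$ for every $\epsilon>0$; letting $\epsilon\to0$ and using $S\le S+I$ closes it. The asserted consequence $x_*\le 1-s_d$ then falls out of the chain $s_d\le\limsup S\le 1-x_*$ once the middle inequality is available.

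The sandwich $\liminf S\le s_d\le\limsup S$ I would prove by contradiction from the sign of $f$. Integrating the logarithmic equation and killing the martingale by the strong law of large numbers (Lemma~\ref{martin}) gives $\tfrac1t\log I(t)=\overline{f(S)}(t)+o(1)$. If $\liminf S>s_d$, then eventually $S$ sits in $(s_d,s_d']$ (the upper bound $S\le 1-x_*$ keeps it below $s_d'$), so $f(S)\ge\eta>0$ and $\tfrac1t\log I$ tends to a positive limit, forcing $I\to\infty$ and contradicting $I\le1$; hence $\liminf S\le s_d$. If instead $\limsup S<s_d$, then $f(S)\le-\eta<0$ eventually, so $I\to0$ exponentially, contradicting the persistence of the disease guaranteed by $R_0^s>1$ (cf.\ \cite{Lahrouz2014} and Theorem~\ref{thm23}); hence $\limsup S\ge s_d$.

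Finally, for the infected envelopes in parts (2) and (3) I would rewrite the same noise-free identity as $(\mu+\gamma)I=\mu(1-x-S)-\tfrac{d}{dt}(S+I)$. Because $S+I$ is $C^1$ and bounded in $[0,1]$, a fluctuation (Barbalat-type) lemma supplies times $t_n\to\infty$ along which $S+I$ approaches its $\limsup$ or $\liminf$ while $\tfrac{d}{dt}(S+I)\to0$; feeding the part-(1) bounds on $S$ together with $x_*\le x\le x^*$ into the identity along these sequences should produce $(1-s_d-x^*)\tfrac{\mu}{\mu+\gamma}\le\limsup I\le 1-s_d$ and $\liminf I\le(1-s_d-x_*)\tfrac{\mu}{\mu+\gamma}$. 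I expect this last transfer to be the main obstacle: the times realizing the extrema of $S+I$ need not realize those of $S$, $I$, and $x$ simultaneously, so selecting the correct one-sided sequences, and verifying that $S\le 1-x_*$ really does keep $S$ below $s_d'$ so that the persistence-versus-blowup dichotomy is clean, is where the estimates must be handled with genuine care.
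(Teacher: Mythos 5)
Your part (1) is, in substance, the paper's own proof: the same It\^o computation for $\log I$, the same identification of $s_d$ as the smaller root of the concave drift $f(z)=\beta z-(\mu+\gamma)-\tfrac12\sigma_1^2z^2$, the same two contradictions (uniformly positive drift forces $\tfrac1t\log I$ to a positive limit, contradicting $I\le1$; uniformly negative drift forces extinction, contradicting the persistence supplied by Theorem~\ref{thm23}, on which the paper leans in exactly the same way), and the same exploitation of the noise-free equation for $S+I$ to get $\limsup_{t\to\infty}S(t)\le1-x_*$ (your Gronwall comparison versus the paper's explicit integration plus the generalized L'H\^opital rule, Lemma~\ref{lHop}, is an immaterial difference). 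Also, the point you defer at the end needs no care at all: $f(1)=\beta-(\mu+\gamma)-\tfrac12\sigma_1^2>0$ precisely when $R_0^s>1$, so $s_d<1<s_d'$ and $f>0$ on all of $(s_d,1]$.

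The genuine gap is in parts (2)--(3), which you leave as a plan with a self-acknowledged obstacle. The obstacle is removable, but not the way you set it up: do not ask the extremal times of $S+I$ to realize the extrema of $S$, $I$, $x$ simultaneously; instead group the identity as $\mu(S+I)+\gamma I=\mu(1-x)-\tfrac{d}{dt}(S+I)$. Along fluctuation times $t_n$ with $(S+I)(t_n)\to\limsup(S+I)$ and $\tfrac{d}{dt}(S+I)(t_n)\to0$, every remaining term needs only a one-sided bound in a compatible direction ($I(t_n)\ge I_*-\epsilon$, $x(t_n)\ge x_*-\epsilon$), giving $\limsup(S+I)\le1-x_*-\tfrac{\gamma}{\mu}I_*$, hence $\limsup S\le1-x_*-(1+\tfrac{\gamma}{\mu})I_*$, and part (2) follows by combining with $s_d\le\limsup S$; the mirrored argument at times realizing $\liminf(S+I)$ gives $\liminf S\ge1-x^*-(1+\tfrac{\gamma}{\mu})I^*$ and hence the lower bound in (3). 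This is exactly what the paper's generalized-L'H\^opital step accomplishes in integral form, so once repaired your route is equivalent. What your plan cannot produce, even after this repair, is the remaining inequality $\limsup I\le1-s_d$: the quantities your identity yields (such as $1-x_*-\tfrac{\gamma}{\mu}I_*$) are not comparable to $1-s_d$, and part (1) bounds $\liminf S$ from \emph{above}, whereas $\limsup I\le1-\liminf S$ would need a bound from \emph{below}. The paper disposes of this inequality directly from $S+I\le1$ (itself a very terse step); your proposal contains no argument for it at all, and a separate one would be required -- e.g., a fluctuation argument at times realizing $\limsup I$, which is only straightforward when $\sigma_1=0$ and $I$ is pathwise $C^1$.
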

\begin{proof}
Since,
\begin{eqnarray*}
d\log(I(t))&=& (-\frac12 \sigma_1^2 S^2(t)+\beta S(t)-(\mu+\gamma))dt + \sigma_1 S dW_1(t).
\end{eqnarray*}
Let $\phi(z)=-\frac12 \sigma_1^2 z^2+\beta z-(\mu+\gamma)$, which has one zero $s_d$ between zero and one if and only if $\frac{\sigma_1^2}{\beta}<\dfrac{R_0}{2}$ and $R_0^s>1$. The function $\phi(z)$ can have a peak at $z^*=\frac{\beta}{\sigma_1^2}$ such that $s_d<z^* \leq 1$ if and only if $\beta\leq \sigma_1^2$. Since $\phi(z)$ is increasing for $z<z^*$ then there exists $\epsilon>0$ such that $\phi(z)$ is increasing for $z<s_d+\epsilon$. Thus, 
$\phi(z)<\phi(s_d-\epsilon)<0$ for $z<s_d-\epsilon$. 

Assume that $\limsup_{t\to \infty} S(t)<s_d-\epsilon$ on a set $\Omega_1$ with positive probability, and so $0\leq S(u) < s_d-\epsilon<1$ for all $u>t_0(\omega)$ for each $\omega \in \Omega_1$. Thus, $\phi(S(u))<\phi(s_d-\epsilon)<0$ for all $u>t_0(\omega)$ for each $\omega \in \Omega_1$. Then, $\limsup_{t\to \infty} \frac{\log(I(t))}{t}\leq \phi(s_d-\epsilon)<0$ with positive probability. That means $\lim_{t\to \infty} \bar{I}(t)=0$ with positive probability, but that would contradict Theorem \ref{thm23} part (2). 

Now, assume that $\liminf_{t\to \infty} S(t)>s_d+\epsilon$ on a set $\Omega_2$ with positive probability, and so $s_d+\epsilon\leq S(u)  \leq 1$ for all $u>t_0(\omega)$ for each $\omega \in \Omega_2$. Thus, $\phi(S(u))>\phi(s_d+\epsilon)>0$ for all $u>t_0(\omega)$ for each $\omega \in \Omega_2$. Then, $\liminf_{t\to \infty} \frac{\log(I(t))}{t}\geq \phi(s_d+\epsilon)>0$ with positive probability. But that would contradict with that $\frac{\log(I(t))}{t}<0$ for all $t$ since $I(t)\leq 1$. Thus, part (1) follows.

Again, since
\begin{eqnarray*}
\dfrac{d (S+I)}{dt}&=&\mu(1-x)-\mu(S+I)-\gamma I 
\end{eqnarray*}
then
\begin{equation*}
   S(t)+I(t)=e^{-\mu t}(S(0)+I(0))+1-e^{-\mu t}\int_0^t  \mu e^{\mu s} x(s) ds -\frac{\gamma}{\mu}e^{-\mu t}\int_0^t  \mu e^{\mu s} I(s) ds
\end{equation*}

Therefore, by the generalized L'H\^opital rule (see Appendix)
\begin{equation*}
   \limsup_{t\to \infty}S(t)\leq 1-\liminf_{t\to \infty} x(t)-(1+\frac{\gamma}{\mu}) \liminf_{t\to \infty}I(t) 
\end{equation*}
and so 
\begin{equation*}
   \liminf_{t\to \infty} I(t)\leq (1-s_d-x_*)\frac{\mu}{\mu+\gamma}
\end{equation*}
Also, by the generalized L'H\^opital rule (see Appendix)
\begin{equation*}
   \liminf_{t\to \infty}S(t) \geq 1-\limsup_{t\to \infty} x(t)-(1+\frac{\gamma}{\mu}) \limsup_{t\to \infty}I(t) 
\end{equation*}
and so 
\begin{equation*}
   \limsup_{t\to \infty} I(t)\geq (1-s_d-x^*)\frac{\mu}{\mu+\gamma}
\end{equation*}
if $x^*=\limsup_{t\to \infty} x(t)<1$. Since,
\begin{equation*}
   S(t)+I(t)\leq 1
\end{equation*}
for all $t>0$, then
\begin{equation*}
   \limsup_{t\to \infty} I(t)\leq 1-s_d.
\end{equation*}
\end{proof}

Note that $s_d \downarrow \frac{1}{R_0}$ as $\sigma_1\to 0^+$. Figure \ref{fig:fig4} (a) shows a simulation of one instance of the case where $\sigma_1^2=0$ and $R_0^s=R_0=6$, in which the proportion of infected fluctuates due to the stochastic nature of vaccine uptake. In that case, $S^*=S_*=\frac{1}{R_0}$. Figure \ref{fig:fig4} (b) illustrates the case where $\sigma_1^2>0$ and $R_0^s=4.33$ with $s_d=.168$ demarcating $S_*$ and $S^*$. In both panels of Figure \ref{fig:fig4}, $I^*$ and $I_*$ have the boundaries given by Theorem \ref{thm24} in one simulation instance. See also Figure \ref{fig:Sfig4}.

Notice that if $x^*=1$, then $x_*$ must also be equal to one, and in that case $I\to 0$ and $S\to 0$ $a.s.$ If $x^*=0$, then $1-s_d$ is a divider of the $I^*$ and $I_*$. Moreover, if $x^*<1-s_d$, then $\limsup_{t\to \infty} I(t)>0$ and so $\liminf_{t\to \infty} I(t)>0$ or otherwise $\lim_{t\to \infty} I(t)=0$ since $0$ is an absorbing state for $I$. Note that $s_d$ is always greater than or equal to $\frac{1}{R_0}$ and are equal when $\sigma_1^2=0$. We can infer that $1-s_d$ is a stochastic herd immunity threshold $HIT^s$. Interestingly, $HIT^s$ is less than or equal to the deterministic herd immunity threshold $1-\frac{1}{R_0}$.

\begin{figure}[H]
    \centering
     \subfigure[]{\includegraphics[width=8 cm]{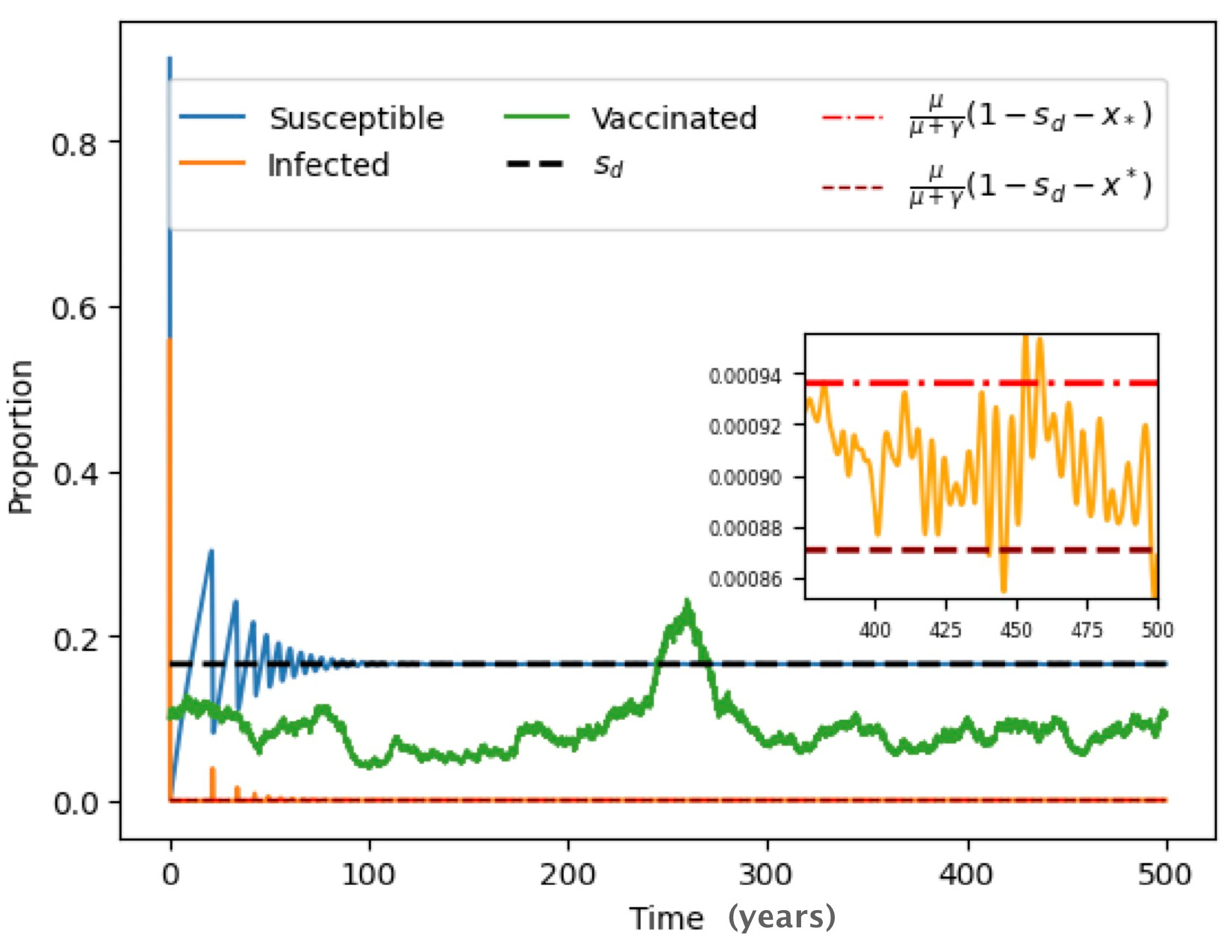}} \subfigure[]
     {\includegraphics[width=8 cm]{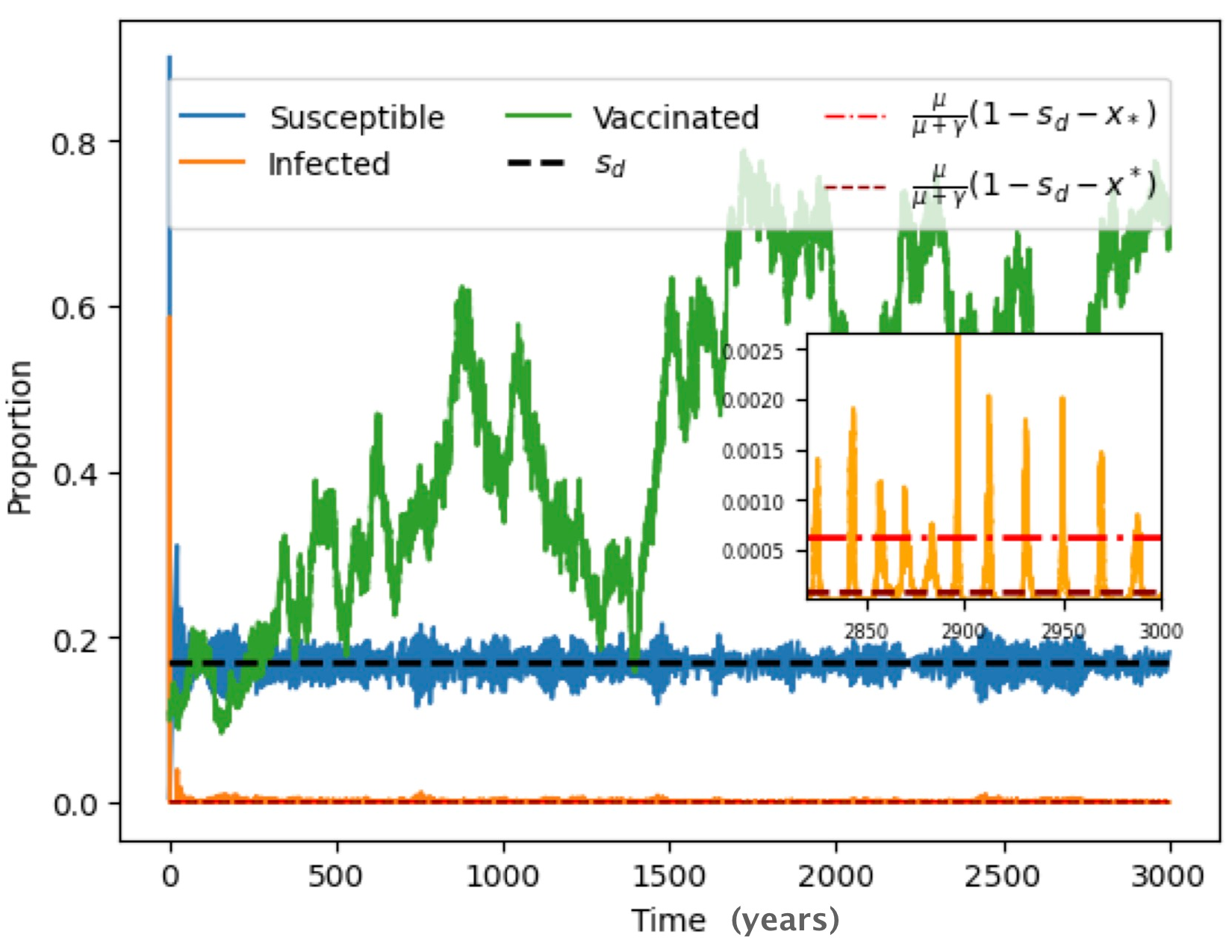}} 
    \caption{Simulation of the susceptible, infected, and vaccinated with the limits in Theorem \ref{thm24}. The disease endemic equilibrium $\mathcal{E}_5$ type is stable when (a) $\sigma_1^2=0$ in which case $R_0^s=R_0=6$ and (b) $\sigma_1^2=13$ in which case $R_0^s=4.33$. The rest of the parameter values are $\beta = 100$,  $\mu = 1/50$, $\gamma = 365/22$, $\kappa = 1.69$,  $\omega = .0004$, $\delta = 0.00005$, $\sigma_2^2=.0008$, and $\sigma_3^2=.0006$.}
    \label{fig:fig4}
\end{figure}

 Note that simulating an almost sure result requires a large number of simulation runs in which the proportion of those simulation runs revealing the result is almost one. Also, we estimated the limit supremum and infimum by long-term simulations in which the cumulative maximum and minimum values of the time-reverse of the process are found to become stable. We notice that they reach stability over a period of time but lose that stability in the last period of the simulation run, since the maximum and minimum are found over a small period of time at the end of the simulation.

The following theorem shows that the limiting temporal variability for fluctuations around levels less than or equal to the equilibria is affected by the noise in the stochastic replicator dynamics representing the parental behavior.

\begin{theorem}
    Let $(S_e,I_e,x_e)$ be an equilibrium of $\mathcal{E}_5$ type in which $x_e\neq 0,1$. If $\frac{1}{2}\frac{2 \mu + \gamma}{\beta}  \sigma_1^2 I_e < \mu$ and $\delta< \frac{1}{4} k \sigma^2$, where $\sigma^{2}=\sigma_{2}^{2}+\sigma_{3}^{2}$, then

    \begin{align*}
   & \limsup_{t\to \infty}\frac{1}{t} \int_0^t \left[ \left( S(u)- \frac{\mu}{\mu -\frac{1}{2}\frac{2 \mu + \gamma}{\beta}  \sigma_1^2 I_e} S_e \right)^2+(I(u) - I_e)^2 + (x(u)-x_e)^2 \right] du \leq  \\
    &\quad \frac{1}{m} \left[ \frac{\mu^2 S_e^{ 2}}{\mu -\frac{1}{2}\frac{2 \mu + \gamma}{\beta}  \sigma_1^2 I_e} +\frac{1}{2} \mu \kappa \sigma^2 x_e(1-x_e)+\mu x_e +\mu S_e(1-S_e) \right],
\end{align*}
almost surely, where $m=\min(\mu -\frac{1}{2}\frac{2 \mu + \gamma}{\beta}  \sigma_1^2 I_e , \, \mu +\gamma,\, \mu \left( \frac{1}{2} k \sigma^2 - 2 \delta \right))$.
\end{theorem}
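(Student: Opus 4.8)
The plan is to prove this by the standard stochastic-Lyapunov method: build a nonnegative Lyapunov function $V$, apply the generalized It\^o formula (Lemma \ref{ito}) to get $dV=\mathcal{L}V\,dt+dM(t)$, show that $\mathcal{L}V\le -m\left[(S-cS_e)^2+(I-I_e)^2+(x-x_e)^2\right]+K$ for the constant $K$ appearing on the right of the statement, then integrate, divide by $t$, and take $\limsup$. The martingale part $M(t)$ is killed by the Strong Law of Large Numbers (Lemma \ref{martin}, as in the earlier theorems), and since $V\ge 0$ we have $\liminf_{t\to\infty}\tfrac1t(V(t)-V(0))\ge 0$; together these give the asserted time-averaged bound with denominator $m$. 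First I would record the $\mathcal{E}_5$ equilibrium identities that will drive every cancellation: $\beta S_e=\mu+\gamma$, the mass balance $\mu(1-x_e)-\mu(S_e+I_e)-\gamma I_e=0$, and the replicator balance $-\omega+I_e+\delta(2x_e-1)+\kappa(\sigma_3^2-\sigma^2 x_e)=0$.

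The Lyapunov function I would use is
\begin{equation*}
V=\tfrac12\left(S+I-S_e-I_e\right)^2+\frac{2\mu+\gamma}{\beta}\left(I-I_e-I_e\ln\tfrac{I}{I_e}\right)+\frac{\mu}{\kappa}\left(x_e\ln\tfrac{x_e}{x}+(1-x_e)\ln\tfrac{1-x_e}{1-x}\right),
\end{equation*}
whose three pieces are each nonnegative. The first term is chosen because the $\sigma_1 S I$ noise cancels in $d(S+I)$, so this term contributes no It\^o correction and no martingale; using the mass balance it yields $-\mu(S-S_e)^2-(\mu+\gamma)(I-I_e)^2-(2\mu+\gamma)(S-S_e)(I-I_e)-\mu(x-x_e)\big[(S-S_e)+(I-I_e)\big]$. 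The $I$-logarithm term produces $\beta(S-S_e)(I-I_e)+\tfrac12 I_e\sigma_1^2 S^2$ after using $\beta S_e=\mu+\gamma$; the weight $\tfrac{2\mu+\gamma}{\beta}$ is picked precisely so the $(S-S_e)(I-I_e)$ cross term cancels the one above, and it produces the quantity $\eta:=\tfrac12\tfrac{2\mu+\gamma}{\beta}\sigma_1^2 I_e$ that governs the first hypothesis. The relative-entropy term for $x$ cleanly removes the $x(1-x)$ prefactor, giving $\mu(x-x_e)(I-I_e)+\mu(2\delta-\tfrac12\kappa\sigma^2)(x-x_e)^2+\tfrac12\mu\kappa\sigma^2 x_e(1-x_e)$, where the weight $\mu/\kappa$ is what matches the stated $\tfrac12\mu\kappa\sigma^2 x_e(1-x_e)$ constant; its cross term cancels the $\mu(x-x_e)(I-I_e)$ above.

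After these cancellations the diagonal coefficients are exactly $\mu-\eta$ on $(S-S_e)^2$, $\mu+\gamma$ on $(I-I_e)^2$, and $\mu\!\left(\tfrac12\kappa\sigma^2-2\delta\right)$ on $(x-x_e)^2$; these are positive precisely under the hypotheses $\tfrac12\tfrac{2\mu+\gamma}{\beta}\sigma_1^2 I_e<\mu$ and $\delta<\tfrac14\kappa\sigma^2$, and $m$ is their minimum. The remaining step is to complete the square in $S$ on $-(\mu-\eta)(S-S_e)^2+2\eta S_e(S-S_e)$, which recenters the quadratic at $cS_e$ with $c=\mu/(\mu-\eta)$ — exactly the shift in the statement — and to collect the It\^o-correction constants and the leftover $S$--$x$ cross term into $K$. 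I expect the main obstacle to be this final bookkeeping: the residual term $-\mu(S-S_e)(x-x_e)$ survives all the weight-tuning, so it must be dominated and absorbed into the constant using the boundedness of the domain $\mathcal{S}$ (all of $S,I,x\in[0,1]$), and it is the careful, slightly lossy bounding of this cross term together with the linear $S$-term that is responsible for the specific constants $\tfrac{\mu^2 S_e^2}{\mu-\eta}$, $\mu x_e$, and $\mu S_e(1-S_e)$ rather than the sharper $\tfrac{\eta^2 S_e^2}{\mu-\eta}$ one would get from the pure completion of squares. Verifying that these bounds can be arranged so that the three clean diagonal coefficients are preserved (and not eroded) is the delicate part; everything else is routine once the function $V$ above is in hand.
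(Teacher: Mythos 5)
Your proposal is correct and follows essentially the same route as the paper's own proof: the identical Lyapunov function $V=\frac{\mu}{\kappa}V_1+V_2+\frac{2\mu+\gamma}{\beta}V_3$ (relative entropy in $x$, squared mass balance in $S+I$, and the $I$-logarithm term), the same weight-tuning to cancel the $(S-S_e)(I-I_e)$ and $(x-x_e)(I-I_e)$ cross terms, the same completion of the square recentering at $\frac{\mu}{\mu-\eta}S_e$, the same absorption of the surviving $-\mu(S-S_e)(x-x_e)$ term via boundedness of $\mathcal{S}$, and the same integrate-divide-SLLN conclusion. Your bookkeeping even reproduces the paper's constant exactly, since $\frac{\eta^2S_e^2}{\mu-\eta}+\eta S_e^2=\frac{\mu^2S_e^2}{\mu-\eta}-\mu S_e^2$, so the two organizations of the residual terms agree.
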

\begin{proof}
Let $a = \kappa  \sigma_{3}^{2} -\delta - \omega $ and $b = \kappa  \sigma^2 -2 \delta$. Notice that we can write $a = bx_e-I_e$ and $I + a - bx = (I - I_e) - b(x-x_e)$ since $I_e,x_e$ are zeros of the drift in the stochastic replicator equation. Consider the following three non-negative functions:
\begin{align*}
V_1 &= - x_e \log \frac{x}{x_e} - (1 - x_e) \log \frac{1-x}{1-x_e}, \\
V_2 &= \frac{1}{2} \left( S - S_e + I - I_e \right)^2, \\
V_3 &= I - I_e - I_e \log \frac{I}{I_e}. 
\end{align*}
(Note: $V_1$ is the relative entropy.) Thus, It\^o formula (see Lemma \ref{ito} in the Appendix) implies
\begin{align*}
dV_1 &= \left[ \kappa \left( x - x_e \right) \left( I + a - bx \right) \right]  + \frac{1}{2} k^2 \sigma^2  (x-x_e)^2 + \frac{1}{2} k^2 \sigma^2 x_e (1-x_e)dt + k\sigma(x-x_e) dW_2 \\
&= \left[ k \left( x - x_e \right) \left( I - I_e \right) + k \left( \frac{1}{2} k \sigma^2 - b \right) \left( x - x_e \right)^2 + \frac{1}{2} k^2 \sigma^2 x_e (1-x_e) \right] dt + k \sigma \left( x - x_e \right) dW_2 
\end{align*}
Moreover,
$$dV_2 = (S - S_e + I - I_e) [\mu (1 - x) - \mu S  - (\mu + \gamma) I] dt $$
but since $\mu = \mu x_e + \mu S_e  + (\mu + \gamma) I_e$ where $S_e,I_e,x_e$ are zeros of drift of the first equation in \eqref{SIR}, then
\begin{align*}
dV_2&= \left( (S - S_e) + (I - I_e) \right) [- \mu (x - x_e) - \mu (S - S_e)  - (\mu + \gamma)(I - I_e)] \\
& =-\mu (S - S_e)^2 -(\mu + \gamma) (I - I_e)^2 -\mu (S - S_e)(x - x_e) -(2\mu + \gamma) (I - I_e)(S - S_e)  -\mu (x - x_e) (I - I_e).
\end{align*}
Finally,
$$
dV_3 = \left[\beta (I - I_e) (S - S_e) + \frac{1}{2} \sigma_1^2 I_e S^2 \right] dt  + \sigma_1 S (I - I_e) dW_1 
$$
Now, let us introduce the non-negative function $$V = \frac{\mu}{k} V_1 + V_2 + \frac{2 \mu + \gamma}{\beta} V_3.$$
Thus,
\begin{align*}
dV &= \left[ - \mu (S - S_e)^2 -(\mu +\gamma)(I - I_e)^2 -\mu \left( \frac{1}{2} k \sigma^2 - 2 \delta \right) (x-x_e)^2 - \mu  (S - S_e)(x -x_e) \right. \\
&\quad \left. +\frac{1}{2}\frac{2 \mu + \gamma}{\beta}  \sigma_1^2 I_e S^2+ \frac{1}{2} \mu \kappa \sigma^2 x_e(1-x_e) \right] dt  + \frac{2 \mu + \gamma}{\beta} \sigma_1 S (I - I_e) dW_1 + \mu \sigma (x - x_e) dW_2.
\end{align*}
But, since $- \mu  (S - S_e)(x -x_e)\leq \mu S_e x + \mu S x_e\leq \mu S_e  + \mu  x_e$, while 
$$- \mu (S - S_e)^2+\frac{1}{2}\frac{2 \mu + \gamma}{\beta}  \sigma_1^2 I_e S^2 =-(\mu -\frac{1}{2}\frac{2 \mu + \gamma}{\beta}  \sigma_1^2 I_e )\left[ S- \frac{\mu}{\mu -\frac{1}{2}\frac{2 \mu + \gamma}{\beta}  \sigma_1^2 I_e} S_e \right]^2+\frac{\mu^2 S_e^{2}}{\mu -\frac{1}{2}\frac{2 \mu + \gamma}{\beta}  \sigma_1^2 I_e}-\mu S_e^{2},$$ then
\begin{align*}
    LV &\leq  -(\mu +\gamma)(I - I_e)^2 -\mu \left( \frac{1}{2} k \sigma^2 - 2 \delta \right) (x-x_e)^2 \\
    &\quad -(\mu -\frac{1}{2}\frac{2 \mu + \gamma}{\beta}  \sigma_1^2 I_e )\left[ S- \frac{\mu}{\mu -\frac{1}{2}\frac{2 \mu + \gamma}{\beta}  \sigma_1^2 I_e} S_e \right]^2+\frac{\mu^2 S_e^{2}}{\mu -\frac{1}{2}\frac{2 \mu + \gamma}{\beta}  \sigma_1^2 I_e} \\
    &\quad +\frac{1}{2} \mu \kappa \sigma^2 x_e(1-x_e)+\mu x_e +\mu S_e(1-S_e)
\end{align*}

Now, if $dV\leq (f(S_e,I_e,x_e)-g(S,I,x))dt+\sigma(S,I,x)dW_t$ for some positive functions $V,f,g$, then $$0\leq V(S(t),I(t),x(t))\leq V(S(0),I(0),x(0))+ \int_0^t (f(S_e,I_e,x_e)-g(S,I,x))du+\int_0^t \sigma(S,I,x)dW_u$$ almost surely. But then $$\frac{1}{t} \int_0^t g(S,I,x) du\leq \frac{1}{t}V(S(0),I(0),x(0))+  f(S_e,I_e,x_e)+\frac{1}{t} \int_0^t \sigma(S,I,x)dW_u.$$
Thus,
$$\limsup_{t \to \infty}\frac{1}{t} \int_0^t g(S,I,x) du  \leq   f(S_e,I_e,x_e)$$ almost surely and the result follows.
\end{proof}

If $\sigma_1^2=0$, as in Figure \ref{fig:fig4} (a), then
\begin{equation*}
   \limsup_{t\to \infty}\frac{1}{t} \int_0^t \left[ \left( S(u)-  S_e \right)^2+(I(u) - I_e)^2 + (x(u)-x_e)^2 \right] du \leq  
    \quad \frac{1}{\min(1 ,\, \frac{1}{2} k \sigma^2 - 2 \delta )} \left[  \frac{1}{2} \kappa \sigma^2 x_e(1-x_e)+ x_e + S_e \right],
\end{equation*}
almost surely.

\section{Discussion and Conclusion}\label{Sec:Disc}

 In this paper, we introduced a novel model of stochastic replicator dynamics (RD), from stochastic game theory, to depict human choice with noise in their perceived utility function. Including white noise in the utility function serves as a way to model irrationality and stochastic deviations from the classical rational utility. Introducing the white noise to the perceived utilities of both groups gives rise to a mutation term in the drift (deterministic) component in the stochastic RD. The mutation term depicts irrational human choices in which they explore other, possibly non-optimal, behaviors. In addition, the stochastic component of the RD reflects the unpredictable nature of human behavior and the various biases and constraints that influence decision-making. Thus, adding stochastic deviations from the classical rational utility aligns more closely with the bounded rationality framework. To our knowledge, this is the first time that parental choice of vaccination for their children has been modeled using stochastic replicator dynamics to describe their bounded rationality. See also \cite{oraby3,oraby2,oraby4} for other deterministic models of bounded rationality.

We also included noise in disease transmission to investigate its effect on vaccination decision-making. Although the latter has been investigated in the literature, this is the first time that the coupled system of stochastic differential equations of SIR and RD has been studied. The coupling takes place as the stochastic process of the proportion of infected children appears in the utility functions and as the stochastic process of the vaccine uptake appears in the first equation of the stochastic SIR. The model shows new dynamics that generalize to those investigated in \cite{oraby1}. 

The presence of stochastic noise in the perceived utilities has no effect on the disease transmission if the stochastic basic reproduction number is less than one and the noise in the disease transmission is small enough. But when those conditions change, stochastic noise in perceived utility can affect the dynamical behavior of the disease due to the difference in the magnitude of that noise in the vaccinator and non-vaccinator parents in a way that can counter peer pressure. The larger the magnitude of that noise in the vaccintor's perceived utility, the more the likelihood that parents will not vaccinate their children, and vice versa for non-vaccinators. A clear and consistent view of the utility of vaccinators is required to achieve disease eradication when the disease has a large stochastic basic reproduction number. Disease eradication becomes uncertain when both magnitudes of noises approach the same value. It also depends on the initial acceptance of the vaccine.  

We noticed here the emergence of new stochastic herd immunity thresholds that are less than the deterministic one. If vaccination uptake is lower than those new thresholds, then the disease will persist. 

The effect of stochasticity on vaccine acceptance and disease eradication could be controlled through a stochastic optimal control. A discount in the cost of vaccination, including but not limited to ease of access to a safe vaccination as well as lowering its price, can achieve full coverage of the vaccine. That is true even when epidemiological, social, and cognitive parameters induce disease persistence. It is shown here that an optimal discount is not required for the full period and could be gradually eased up. The optimal control policy takes into account that vaccination efforts have practical and resource-related bounds such as finite resources and supply.   

Although the underlying model attempts to capture the population-level effects of shared or systematic uncertainties, it does not account for heterogeneity in the individual’s perceptions of payoffs. The assumption of shared or systematic stochasticity aligns with situations in which systemic external influences, e.g. environmental, cultural, informational, and economical, dominate the individual differences, especially in tightly connected or homogeneous populations. It may be less applicable to diverse populations with significant individual variability, which emphasizes the need for models that balance collective and individual-level influences on disease spread. Future work in stochastic modeling of decision-making in behavioral epidemiology should extend itself to incorporating individual-specific (occasional) stochasticity, which would allow for the exploration of diverse behaviors and richer dynamics at both individual and population levels.

In our simulations, where time is measured in years, the proportions of the susceptible, infected, and vaccine acceptors sometimes take a long time to converge to equilibrium under certain parameter settings. This slow convergence reflects the slow mixing dynamics of the system. In future work, it is warranted to find or estimate parameter values that reflect the empirically observed behavior of these variables in real life.

Adopting stochastic game theory in epidemiological modeling assists in understanding how diseases spread and in forecasting and managing public responses to health crises. This approach provides a more nuanced view of vaccine uptake through human behavior and its impact on public health, establishing behavior management as an essential element of contemporary disease control strategies.

\section*{Acknowledgment}
The authors express their sincere gratitude to the editor and the two referees for their invaluable insights and constructive suggestions, which significantly improved the quality and clarity of this manuscript.

\section*{Conflict of interest}
The authors declare no conflict of interest.


\newpage
\appendix
\section{Supporting Material.}
In the appendix, we will present some important definitions, lemmas, and propositions, some of which can be found in the literature. The rest are proven here.

\subsection{Definitions and Supporting Lemmas.}\label{A.supp}
\begin{lemma}\label{lHop} The generalized L'H\^opital rule  \cite{Young} 
\begin{equation} \label{gen_LHospital}
    \liminf_{t\to \infty} \dfrac{f'(t)}{g'(t)}\leq \liminf_{t\to \infty} \dfrac{f(t)}{g(t)}\leq\limsup_{t\to \infty} \dfrac{f(t)}{g(t)}\leq\limsup_{t\to \infty} \dfrac{f'(t)}{g'(t)} 
\end{equation}    
\end{lemma}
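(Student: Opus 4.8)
The plan is to reduce the four-term chain \eqref{gen_LHospital} to its two nontrivial outer inequalities, since the middle one, $\liminf_{t\to\infty} f(t)/g(t)\le\limsup_{t\to\infty} f(t)/g(t)$, is automatic, and the two outer inequalities are mirror images: the leftmost follows from the rightmost by applying it to $-f$. I would work under the standing hypotheses that make this a genuine indeterminate-form result, namely that $f$ and $g$ are differentiable for large $t$, that $g$ is eventually strictly monotone with $g'$ of one sign, and that we are in an $\infty/\infty$ (or $0/0$) form. This is exactly the form arising in the applications of the lemma in Theorem \ref{thm24} (with $g(t)=e^{\mu t}$) and in the derivation of \eqref{endemic1} (with $g(t)=t$), so without loss of generality I would take $g'>0$ and $g(t)\uparrow\infty$ and recover the remaining cases by the symmetries below.

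For the rightmost inequality, set $L=\limsup_{t\to\infty} f'(t)/g'(t)$. If $L=+\infty$ there is nothing to prove, so assume $L<\infty$ and fix any $\lambda>L$. By definition of $\limsup$ there is $T$ with $f'(\tau)/g'(\tau)<\lambda$ for all $\tau>T$. The key step is the Cauchy Mean Value Theorem: for $t>s>T$ there exists $\xi\in(s,t)$ with
\[
\frac{f(t)-f(s)}{g(t)-g(s)}=\frac{f'(\xi)}{g'(\xi)}<\lambda,
\]
and since $g(t)-g(s)>0$ this rearranges to $f(t)-f(s)<\lambda\,(g(t)-g(s))$. Dividing by $g(t)>0$ gives
\[
\frac{f(t)}{g(t)}<\frac{f(s)}{g(t)}+\lambda\Big(1-\frac{g(s)}{g(t)}\Big).
\]
I would then hold $s$ fixed and let $t\to\infty$: because $g(t)\to\infty$, both $f(s)/g(t)\to0$ and $g(s)/g(t)\to0$, so $\limsup_{t\to\infty} f(t)/g(t)\le\lambda$. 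Letting $\lambda\downarrow L$ yields $\limsup_{t\to\infty} f(t)/g(t)\le L$, the rightmost inequality. Applying this to $-f$ and using $\limsup(-h)=-\liminf(h)$ converts it into $\liminf_{t\to\infty} f'(t)/g'(t)\le\liminf_{t\to\infty} f(t)/g(t)$, the leftmost inequality, which closes the chain.

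I expect the main obstacle to be a matter of hypotheses and of the order of limits rather than a deep idea. The delicate points are: (i) pinning down the precise regularity under which the bare inequality \eqref{gen_LHospital} holds, since as quoted it suppresses the indeterminate-form assumption that the Cauchy Mean Value Theorem step needs; (ii) keeping $s$ frozen while $t\to\infty$ so that the boundary terms $f(s)/g(t)$ and $g(s)/g(t)$ vanish \emph{before} $\lambda$ is sent to $L$, since reversing this order loses the estimate; and (iii) handling the degenerate cases $L=\pm\infty$ and $g'$ changing sign, which is why I would isolate the case $g'>0$, $g\uparrow\infty$ at the outset and obtain the rest by the substitutions $f\mapsto -f$ and, if needed, an orientation-reversing reparametrization. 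Since the statement is classical and attributed to \cite{Young}, the proposal is essentially to reproduce this Cauchy-Mean-Value argument carefully rather than to devise anything new.
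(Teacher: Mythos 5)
Your proposal is correct, but there is nothing in the paper to compare it against: the paper does not prove Lemma \ref{lHop} at all, it simply quotes the inequality and cites Young (1910). Your Cauchy--Mean--Value argument is the standard and valid way to establish it, and it is worth noting that your proof actually needs less than you assume at the outset: once $g'>0$ eventually and $g(t)\uparrow\infty$, no indeterminate-form hypothesis on $f$ is required, since the boundary terms $f(s)/g(t)$ and $g(s)/g(t)$ vanish for each fixed $s$ regardless of the behavior of $f$; the ``$0/0$ form'' you mention is a genuinely separate case that your symmetries ($f\mapsto -f$, reparametrization) do not recover, but it is also never used in this paper. Indeed every invocation here --- Lemma \ref{Ap1} with $g(t)=t$, Proposition \ref{prop1} and Theorem \ref{thm24} with $g(t)=e^{\mu t}$ --- falls squarely in the case you treat. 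Your observation (i) is also a fair criticism of the paper's statement: as displayed, inequality \eqref{gen_LHospital} carries no hypotheses on $f$ and $g$, and it is false in that generality (e.g.\ for bounded or non-monotone $g$, where the Cauchy MVT ratio cannot be formed or the division by $g(t)-g(s)$ reverses sign), so the regularity conditions you spell out are exactly what the lemma implicitly relies on. In short: the proof is sound, fills a gap the paper delegates to a century-old citation, and correctly identifies the hypotheses under which the cited inequality is actually true.
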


Let $(\Omega,\mathcal{F},\{\mathcal{F}_t\}_{t\geq 0},\mathbb{P})$ be a complete probability space with filtration $\{\mathcal{F}_t\}_{t\geq 0}$ with respect to which all of the stochastic processes introduced in the following are defined.

\begin{lemma}[It\^o's formula]\label{ito}
Let  the one-dimensional stochastic differential equation
$$
    dX({t})=\mu_{t}dt+\sigma_{t}dW({t})
$$
       If $f\left(t,x\right)$ is a twice-differentiable scalar function, then
$$
    df(t,X(t))=\left(\frac{\partial f}{\partial t}+\mu_{t}\frac{\partial f}{\partial x}+\frac{\sigma_{t}^{2}}{2}\frac{\partial^{2}f}{\partial x^{2}}\right)dt+\sigma_{t}\frac{\partial f}{\partial x}dW(t)
$$

\end{lemma}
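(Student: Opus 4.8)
The plan is to construct a single nonnegative Lyapunov functional out of three pieces, apply Itô's formula (Lemma \ref{ito}) to compute its drift, show that the drift is bounded above by a negative-definite quadratic form in the deviations of $(S,I,x)$ from equilibrium plus an explicit constant, and then integrate and kill the stochastic integrals with the martingale strong law to obtain the time-averaged bound. First I would record the two equilibrium identities that drive every cancellation. Setting $a=\kappa\sigma_3^2-\delta-\omega$ and $b=\kappa\sigma^2-2\delta$, the fact that $(S_e,I_e,x_e)$ is a zero of the replicator drift gives $a=bx_e-I_e$, so that $I+a-bx=(I-I_e)-b(x-x_e)$; and the zero of the $S$-drift gives $\mu=\mu x_e+\mu S_e+(\mu+\gamma)I_e$.

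Next I would apply Itô's formula to the three nonnegative functions $V_1=-x_e\log(x/x_e)-(1-x_e)\log((1-x)/(1-x_e))$ (the relative entropy in $x$), $V_2=\tfrac12(S-S_e+I-I_e)^2$, and $V_3=I-I_e-I_e\log(I/I_e)$. The crucial structural point is where the two variances enter: the replicator noise produces the Itô correction $\tfrac12\kappa^2\sigma^2 x_e(1-x_e)$ in $dV_1$, and the transmission noise produces $\tfrac12\sigma_1^2 I_e S^2$ in $dV_3$. These are the only appearances of the noise intensities, and they are exactly what generate the constant terms on the right-hand side of the bound.

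I would then form $V=\frac{\mu}{\kappa}V_1+V_2+\frac{2\mu+\gamma}{\beta}V_3$, the weights being chosen so that the indefinite cross terms cancel: the $(x-x_e)(I-I_e)$ term from $V_1$ annihilates the one from $V_2$, and the $(I-I_e)(S-S_e)$ term from $V_3$ annihilates the $(2\mu+\gamma)$-weighted one from $V_2$. What survives in the generator $LV$ is $-\mu(S-S_e)^2-(\mu+\gamma)(I-I_e)^2-\mu(\tfrac12\kappa\sigma^2-2\delta)(x-x_e)^2$, a single leftover cross term $-\mu(S-S_e)(x-x_e)$, and the two variance constants. Two maneuvers finish the estimate: bound the leftover cross term by $-\mu(S-S_e)(x-x_e)\le \mu S_e x+\mu S x_e\le \mu S_e+\mu x_e$ using $0\le S,x\le 1$; and complete the square on $S$ by absorbing the Itô term, writing $-\mu(S-S_e)^2+\tfrac12\tfrac{2\mu+\gamma}{\beta}\sigma_1^2 I_e S^2$ as $-\theta\,[S-\tfrac{\mu}{\theta}S_e]^2+\tfrac{\mu^2 S_e^2}{\theta}-\mu S_e^2$, where $\theta=\mu-\tfrac12\tfrac{2\mu+\gamma}{\beta}\sigma_1^2 I_e$. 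The hypothesis $\tfrac12\tfrac{2\mu+\gamma}{\beta}\sigma_1^2 I_e<\mu$ makes $\theta>0$, and $\delta<\tfrac14\kappa\sigma^2$ makes the coefficient $\mu(\tfrac12\kappa\sigma^2-2\delta)$ of $(x-x_e)^2$ positive, so that $m>0$ and the surviving form is genuinely negative definite. Combining the surviving constant $\tfrac{\mu^2 S_e^2}{\theta}-\mu S_e^2$ with $\mu S_e+\mu x_e$ and the two variance constants reproduces exactly the bracketed right-hand side, using $-\mu S_e^2+\mu S_e=\mu S_e(1-S_e)$.

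With $dV\le (C-g(S,I,x))\,dt$ plus martingale differentials, where $g\ge m$ times the three squares in the statement and $C$ is the bracketed constant, I would integrate over $[0,t]$, divide by $t$, and discard $V(t)/t\ge 0$ and $V(0)/t\to 0$. The stochastic integrals against $dW_1$ and $dW_2$ are local martingales whose quadratic variations grow at most linearly in $t$ (since $S,I,x$ are bounded), so their time averages vanish almost surely by the strong law (Lemma \ref{martin}). Taking $\limsup_{t\to\infty}$ yields $\limsup\frac1t\int_0^t g\,du\le C$ a.s., and dividing by $m$ gives the claim. The main obstacle is purely the bookkeeping: one must track every cross term so the three cancellations are exact, and verify that the completed-square constant and the cross-term bound assemble precisely into the stated right-hand side with all signs aligned so that the three surviving coefficients, whose minimum is $m$, are all strictly positive under the two hypotheses.
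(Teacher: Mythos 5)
Your proposal does not prove the statement at hand. The statement is Lemma~\ref{ito}, It\^o's formula itself, which the paper records in the appendix as a standard tool (without proof, as is customary --- it is quoted from the stochastic-calculus literature, e.g.\ \cite{mao2007stochastic}). What you have written is instead a proof sketch of the paper's final theorem in Section~3 --- the time-averaged mean-square bound around an $\mathcal{E}_5$-type equilibrium via the Lyapunov functional $V=\frac{\mu}{\kappa}V_1+V_2+\frac{2\mu+\gamma}{\beta}V_3$. Worse, as an argument for Lemma~\ref{ito} it is circular: your very first step is ``apply It\^o's formula (Lemma~\ref{ito})'', i.e.\ you invoke the statement you were asked to establish.

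A genuine proof of Lemma~\ref{ito} looks nothing like a Lyapunov computation. For an It\^o process $dX(t)=\mu_t\,dt+\sigma_t\,dW(t)$ one expands $f(t+\Delta t, X(t+\Delta t))$ by Taylor's theorem to second order in $\Delta X$ and first order in $\Delta t$ over a partition, uses that the quadratic variation of $X$ accumulates as $\int_0^t \sigma_s^2\,ds$ (driven by $(\Delta W)^2\approx \Delta t$), shows the cross term $\Delta t\,\Delta W$ and all higher-order terms vanish in probability as the mesh goes to zero, and handles unbounded coefficients and the mere twice-differentiability of $f$ by a localization (stopping-time) argument before passing to the limit of the Riemann sums defining the $dt$- and $dW$-integrals. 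None of these ingredients appears in your proposal, so as a proof of the stated lemma it has a complete gap; for its part, the material you did write is essentially the paper's own argument for its last theorem, so the content is sound but misattached to the wrong statement.
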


A d-dimensional stochastic differential equation
$$dX(t)=F(t,X(t)) dt + G(t,X(t)) dW(t)$$
where the function $F(t,x)=\left(f_{i}(t,x)\right)_{1\leq i\leq d}$ is a $d\times 1$ vector defined on $[0,\infty]\times \mathbb{R}^d$ and $G(t,x)=\left(g_{i,j}(t,x)\right)_{1\leq i\leq d,1\leq j\leq n}$ is a $d\times n$ vector defined on $[0,\infty]\times \mathbb{R}^d$ are both locally Lipschitz functions in $x$. Let $W(t)$ be an $n-$ dimensional Brownian motion. Let $V$ be a function in $C^{1,2}([0,\infty]\times \mathbb{R}^d,[0,\infty))$ and let the differential operator $L$ be defined by
$$LV(t,x)=\dfrac{\partial V(t,x)}{\partial t}+F(t,x)^T DV(t,x) +\frac12 Trace(G(t,x)^T  H(t,x) G(t,x) ) $$
where $DV(t,x)=\dfrac{\partial V(t,x)}{\partial x}$ is the gradient of $V(t,x)$ and $H(t,x)=\dfrac{\partial^2 V(t,x)}{\partial x^2}$ is the Hessian matrix of $V(t,x)$. It\^o's formula for $X(t)$ and such function $V$ states that
$$dV(t,X(t))=LV(t,X(t))dt+DV(t,X(t))^T G(t,X(t)) dW(t)$$
for $t\geq 0$.

The following lemma can be found in \cite{mao2007stochastic}.

\begin{lemma}\label{martin}
Let $\{M_t:t\geq 0\}$ be a real-valued continuous local martingale with respect to $\{\mathcal{F}_t\}_{t\geq 0}$ such that $M_0=0$. If $\limsup_{t \to \infty} \frac{\mathbb{E}(M_t^2)}{t}   < \infty$ then $\lim_{t \to \infty} \frac{M_t}{t}=0$ $a.s.$
\end{lemma}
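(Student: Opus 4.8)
The plan is to sidestep the quadratic variation and exploit the hypothesis $\limsup_{t\to\infty}\mathbb{E}(M_t^2)/t<\infty$ directly, through Doob's $L^2$ maximal inequality combined with a Borel--Cantelli argument along a geometrically growing sequence of times. First I would record the consequence of the hypothesis: there exist constants $C>0$ and $t_0\ge 1$ with $\mathbb{E}(M_t^2)\le Ct$ for all $t\ge t_0$; in particular $M_t\in L^2$ for every $t$, so that on each finite interval $M$ is a square-integrable martingale and $M_t^2$ is a nonnegative submartingale. This reduction is the one delicate point, and I expect it to be the main obstacle: one must justify passing from the continuous \emph{local} martingale to a genuine martingale on which Doob's inequality is licit (via the localizing sequence together with the linear $L^2$ bound), since the maximal function that Doob controls is exactly what the whole argument rests on.

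Granting that, the heart of the proof is a summability estimate. Fix $\theta>1$ and set $n_k=\theta^{k}$. For each $\epsilon>0$, Doob's submartingale inequality applied to $M^2$ on $[0,n_{k+1}]$ gives
\[
\mathbb{P}\!\left(\sup_{0\le s\le n_{k+1}}|M_s|\ge \epsilon\,n_k\right)\le \frac{\mathbb{E}(M_{n_{k+1}}^2)}{\epsilon^2 n_k^2}\le \frac{C\,n_{k+1}}{\epsilon^2\,n_k^2}=\frac{C\theta}{\epsilon^2}\,\theta^{-k},
\]
valid once $n_{k+1}\ge t_0$. The geometric spacing is precisely what renders the right-hand side summable in $k$, which is why I choose $n_k=\theta^k$ rather than integer times. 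By the Borel--Cantelli lemma, almost surely there is $k_0(\omega)$ with $\sup_{0\le s\le n_{k+1}}|M_s|<\epsilon\,n_k$ for all $k\ge k_0$.

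Finally I would interpolate between the grid points. For any $t\ge n_{k_0}$, pick $k\ge k_0$ with $n_k\le t\le n_{k+1}$; then
\[
\frac{|M_t|}{t}\le \frac{\sup_{0\le s\le n_{k+1}}|M_s|}{n_k}<\epsilon ,
\]
so $\limsup_{t\to\infty}|M_t|/t\le\epsilon$ almost surely. Applying this with $\epsilon=1/m$ for $m=1,2,\dots$ and intersecting the resulting almost sure events gives $\limsup_{t\to\infty}|M_t|/t=0$, i.e. $\lim_{t\to\infty}M_t/t=0$ almost surely, as claimed. An alternative route, should the local-to-true martingale reduction prove awkward, is the Dambis--Dubins--Schwarz time change $M_t=B_{\langle M\rangle_t}$ together with the strong law $B_s/s\to0$ for Brownian motion; but that path then demands control of $\langle M\rangle_t/t$, which the stated hypothesis does not furnish as directly, so I would favor the Doob--Borel--Cantelli argument above.
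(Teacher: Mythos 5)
Your overall scheme (Doob's $L^2$ maximal inequality along geometric times $n_k=\theta^k$, Borel--Cantelli, then interpolation between grid points) is a clean and correct proof \emph{for genuine square-integrable martingales}, but the reduction you yourself flag as the delicate point is a genuine gap, and the repair you sketch in parentheses does not close it. The hypothesis $\limsup_{t\to\infty}\mathbb{E}(M_t^2)/t<\infty$ does \emph{not} imply that a continuous local martingale is a true martingale, nor that $M^2$ is a submartingale, nor that Doob's bound $\mathbb{P}(\sup_{s\le T}|M_s|\ge\lambda)\le \mathbb{E}(M_T^2)/\lambda^2$ holds. The standard counterexample is the inverse Bessel process: for $B$ a three-dimensional Brownian motion started at $x_0\neq 0$, the process $M_t=|B_t|^{-1}-|x_0|^{-1}$ is a strict local martingale with $M_0=0$ and $\sup_t\mathbb{E}(M_t^2)<\infty$ (so your hypothesis holds trivially), yet $\mathbb{E}\langle M\rangle_t=\int_0^t\mathbb{E}|B_s|^{-4}\,ds=\infty$ and $\mathbb{P}\bigl(\sup_{s\le T}|M_s|\ge\lambda\bigr)\asymp \lambda^{-1}$, so the inequality you invoke fails outright (indeed $\mathbb{E}\sup_{s\le T}|M_s|=\infty$). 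Localization cannot rescue the step: for the stopped martingales one has $\mathbb{E}(M_{T\wedge\tau_n}^2)=\mathbb{E}\langle M\rangle_{T\wedge\tau_n}\uparrow\mathbb{E}\langle M\rangle_T$, which may be infinite even when $\mathbb{E}(M_T^2)\le CT$, and Fatou only gives $\mathbb{E}(M_T^2)\le\mathbb{E}\langle M\rangle_T$, the wrong direction. (This example defeats your proof, not necessarily the conclusion --- there $M_t/t\to 0$ a.s. anyway --- but it shows the stated hypothesis is too weak to license the Doob step.)

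For comparison: the paper does not prove this lemma at all; it quotes it from Mao's book, where the strong law of large numbers for continuous local martingales is stated with the \emph{pathwise quadratic-variation} hypothesis $\limsup_{t\to\infty}\langle M\rangle_t/t<\infty$ a.s., and the proof there avoids any maximal inequality on $M$: one considers $N_t=\int_0^t(1+s)^{-1}dM_s$, notes $\langle N\rangle_\infty<\infty$ a.s., concludes that $N_t$ converges a.s., and finishes with Kronecker's lemma. The paper's restatement with $\mathbb{E}(M_t^2)$ is a slight garbling; in its applications the quantity actually bounded is $\langle M\rangle_t=\sigma_1^2\int_0^t S(u)^2\,du\le\sigma_1^2 t$ pathwise, so the quadratic-variation version is what is really used. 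Your argument becomes fully correct if you strengthen the hypothesis to $\mathbb{E}\langle M\rangle_t\le Ct$: this forces $M$ to be a true $L^2$ martingale with $\mathbb{E}(M_t^2)=\mathbb{E}\langle M\rangle_t$, after which your Doob--Borel--Cantelli argument goes through verbatim and is arguably more elementary than the Kronecker route. Ironically, the Dambis--Dubins--Schwarz alternative you set aside because ``the stated hypothesis does not furnish control of $\langle M\rangle_t/t$'' is the one aligned with the source: the intended hypothesis \emph{is} a bound on $\langle M\rangle_t/t$.
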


The following lemma is an extension to the lemmas that were introduced in the appendix of \cite{Ji2014}. It is new to our knowledge. 

\begin{lemma}\label{Ap1}
Consider a function $f\in C([0,\infty)\times \Omega,(0,\infty))$. Let $a(t)$ be a sequence such that $\lim_{t \to \infty} \dfrac{a(t)}{t}=a$ a.s. and let $a$ and $b$ be two positive constants such that
$$\log(f(t))\geq (\leq) \, a(t)-b\int_0^t f(s)ds $$
for all $t\geq 0$ a.s. Then
$$\liminf_{t\to \infty} (\limsup_{t\to \infty})\, \frac{1}{t} \int_0^t f(s)ds\geq (\leq) \, \dfrac{a}{b} \quad a.s.$$
\end{lemma}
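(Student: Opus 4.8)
The plan is to treat the statement pathwise: for almost every $\omega$ the hypothesis $\log f(t)\ge a(t)-b\int_0^t f(s)\,ds$ holds for all $t\ge 0$ and simultaneously $a(t)/t\to a$, so it suffices to establish the claimed implication for a fixed continuous positive function $f$ together with a continuous $a(t)$ satisfying $a(t)/t\to a$. I would prove only the ``$\ge$'' case in detail; the ``$\le$'' case follows by reversing every inequality in the argument below.

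First I would set $F(t)=\int_0^t f(s)\,ds$, so that $F$ is $C^1$, strictly increasing, $F(0)=0$, and $F'(t)=f(t)$. Exponentiating the hypothesis gives $f(t)\,e^{bF(t)}\ge e^{a(t)}$, and since $\frac{d}{dt}e^{bF(t)}=b\,f(t)\,e^{bF(t)}$, this is exactly the differential inequality $\frac{d}{dt}e^{bF(t)}\ge b\,e^{a(t)}$. Integrating over $[0,t]$ and using $F(0)=0$ yields $e^{bF(t)}\ge 1+b\int_0^t e^{a(s)}\,ds\ge b\int_0^t e^{a(s)}\,ds$, hence
\[ F(t)\ge \frac{1}{b}\log\!\Big(b\int_0^t e^{a(s)}\,ds\Big). \]
Dividing by $t$, the constant factor $b$ contributes $\frac1t\log b\to 0$, so the whole problem reduces to the asymptotics of $\frac1t\log\int_0^t e^{a(s)}\,ds$.

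The main step is then the Laplace-type estimate $\liminf_{t\to\infty}\frac1t\log\int_0^t e^{a(s)}\,ds\ge a$. Given $\varepsilon\in(0,a)$, the convergence $a(s)/s\to a$ provides a $T$ with $a(s)\ge(a-\varepsilon)s$ for $s\ge T$; discarding the part of the integral on $[0,T]$ and bounding below by $\int_T^t e^{(a-\varepsilon)s}\,ds=\frac{1}{a-\varepsilon}\big(e^{(a-\varepsilon)t}-e^{(a-\varepsilon)T}\big)$, whose logarithm divided by $t$ tends to $a-\varepsilon$, one obtains $\liminf_{t\to\infty}\frac1t\log\int_0^t e^{a(s)}\,ds\ge a-\varepsilon$, and letting $\varepsilon\downarrow 0$ gives the claim. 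Combined with the displayed lower bound for $F(t)$ this yields $\liminf_{t\to\infty}\frac1t\int_0^t f(s)\,ds\ge a/b$. For the ``$\le$'' case the same integrating-factor step gives $F(t)\le\frac1b\log\big(1+b\int_0^t e^{a(s)}\,ds\big)$, and the matching upper estimate $\limsup_{t\to\infty}\frac1t\log\int_0^t e^{a(s)}\,ds\le a$ — obtained by splitting the integral at $T$, bounding $e^{a(s)}$ by a constant on the compact interval $[0,T]$ and by $e^{(a+\varepsilon)s}$ on $[T,t]$ — finishes the argument. I expect the only delicate point to be this two-sided Laplace asymptotic: one must control the pre-asymptotic window $[0,T]$ where $a(s)/s$ has not yet settled and verify that its bounded contribution is washed out after taking $\frac1t\log(\cdot)$.
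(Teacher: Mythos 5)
Your proposal is correct and follows essentially the same route as the paper's proof: exponentiating the hypothesis via the integrating factor $e^{b\int_0^t f}$, comparing $e^{a(s)}$ with $e^{(a\mp\varepsilon)s}$ beyond a threshold $T$, and letting $\varepsilon\downarrow 0$ after the $\frac{1}{t}\log(\cdot)$ asymptotics. The only cosmetic differences are that you work pathwise and integrate $e^{a(s)}$ before invoking the Laplace-type estimate, whereas the paper rescales to $g=bf$, substitutes the exponential bound $e^{a(t)}\geq e^{(a-\varepsilon)t}$ prior to integrating, and phrases the a.s.\ reduction through sets $\Omega_\varepsilon$ with $\mathbb{P}(\Omega_\varepsilon)\geq 1-\varepsilon$.
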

\begin{proof} We will prove the "$\geq$" part and the "$\leq$" follows using the same approach. Let $g(t):=b\,f(t)$ and so the inequality becomes
$$\log(g(t))\geq (\leq) \, a(t)+\log(b)-\int_0^t g(s)ds. $$

Let $\lim_{t \to \infty} \dfrac{a(t)}{t}=a$ a.s. hence for arbitrary $\epsilon>0$ but small enough so that there exists $\Omega_\epsilon\in \mathcal{F}$ with $\mathbb{P}(\Omega_\epsilon)\geq 1-\epsilon$ and $0<a-\epsilon \leq \dfrac{a(t)}{t}(\omega)\leq a+\epsilon$ for all $t\geq t_0(\omega)$ for each $\omega \in \Omega_\epsilon$.  Let $G(t)=\int_0^t g(s) ds$ for all $t\geq 0$, then $dG(t)=g(t)dt$ and $$G(t)\geq a(t)+\log(b) -\log(g(t))$$  for all $t\geq 0$. Therefore, 
$$de^{G(t)}=e^{G(t)}g(t)dt\geq e^{a(t)+\log(b) -\log(g(t))} g(t)dt=b e^{a(t)} dt$$ and $e^{a(t)}\geq e^{(a-\epsilon)t}$ on $\Omega_\epsilon$. Thus, for $t>t_0$
$$G(t)\geq \log(e^{G(t_0)}+\frac{b}{(a-\epsilon)}[e^{(a-\epsilon)t}-e^{(a-\epsilon)t_0}])$$ Hence, by L'H\^opital rule (see Lemma \ref{lHop})
$$\liminf_{n\to \infty} \dfrac{G(t)}{t}\geq a-\epsilon$$
and so
$$\liminf_{t\to \infty} \frac{1}{t} \int_0^t f(s)ds\geq \dfrac{a-\epsilon}{b} \quad \text{ on }\Omega_\epsilon$$
for arbitrary $\epsilon>0$, thus $$\liminf_{t\to \infty}  \frac{1}{t} \int_0^t f(s)ds\geq  \, \dfrac{a}{b} \quad a.s.$$
\end{proof}

\subsection{Existence, Uniqueness, and Boundedness.}\label{A.ext}

Here, we will prove the positive invariance of the solution set,
$\mathcal{S}=\{(s,i,x)\in \mathbb{R}_+^3: 0\leq s+i \leq 1 \text{ and } 0\leq x \leq 1\}$. It is not straightforward since the drift and volatility in the stochastic equations are locally Lipschitz continuous but not growing linearly. That might cause a blow-up in the solutions of the stochastic differential equations.
\begin{lemma}\label{invar}
If model \eqref{SIR} has initial state $(S(0),I(0),x(0))\in \mathcal{S}$, then the model has a unique solution
$(S(t),I(t),x(t))\in \mathcal{S}$ for all $t\geq 0$ with probability one.
\end{lemma}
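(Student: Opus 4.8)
The plan is to follow the now-standard localization argument for stochastic epidemic systems (cf. \cite{mao2007stochastic,gray2011stochastic,Ji2014}): first obtain a unique maximal local solution, then rule out both explosion and escape from $\mathcal{S}$ by exhibiting a Lyapunov function whose generator is bounded above on the interior of $\mathcal{S}$. Since the drift $F$ and diffusion $G$ in \eqref{SIR} are polynomial in $(S,I,x)$, they are locally Lipschitz, so for any initial state there is a unique solution $(S(t),I(t),x(t))$ on a maximal interval $[0,\tau_e)$, with $\tau_e$ the explosion time. It therefore suffices to prove that, for initial data in the interior $\mathcal{S}^\circ=\{(s,i,x): s>0,\ i>0,\ s+i<1,\ 0<x<1\}$, one has $\tau_e=\infty$ a.s. and the solution never leaves $\mathcal{S}^\circ$. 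The remaining initial states lie on the faces $x=0$, $x=1$, or $i=0$, each of which is invariant because the corresponding coefficients carry a factor of $x$, $(1-x)$, or $I$; on each face the system reduces to a lower-dimensional model of the same type, to which the identical argument applies after deleting the relevant logarithmic terms below.

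For the interior case, fix $n_0$ large and for $n\ge n_0$ let $\tau_n$ be the first exit time of $(S,I,x)$ from the compact set on which $S$, $I$, $1-S-I$, $x$, and $1-x$ are all $\ge 1/n$. Then $\tau_n$ increases to some $\tau_\infty\le\tau_e$, and it is enough to show $\tau_\infty=\infty$ a.s. Consider the nonnegative $C^2$ function on $\mathcal{S}^\circ$,
\[
V(S,I,x)=-\log S-\log I-\log(1-S-I)-\log x-\log(1-x),
\]
which is strictly positive (each argument lies in $(0,1)$) and tends to $+\infty$ as $(S,I,x)$ approaches $\partial\mathcal{S}$. Applying It\^o's formula (Lemma \ref{ito}) termwise, the crucial point is that every contribution to $LV$ is bounded above. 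Using the deterministic evolution $d(S+I)=[\mu(1-x)-\mu(S+I)-\gamma I]\,dt$ one finds $L(-\log(1-S-I))=\mu-\frac{\mu x+\gamma I}{1-S-I}\le\mu$, and directly $L(-\log S)=-\frac{\mu(1-x)}{S}+\beta I+\mu+\frac12\sigma_1^2 I^2\le \beta+\mu+\frac12\sigma_1^2$, since in both cases the only singular term is nonpositive and merely improves the bound. The remaining terms $L(-\log I)$, $L(-\log x)$, and $L(-\log(1-x))$ are, up to sign, exactly the drifts computed in the proofs of Theorems \ref{thm:diseasefree} and \ref{thm:vax}; none of them produces a singular factor $1/I$, $1/x$, or $1/(1-x)$, because $I$ and $x(1-x)$ appear as common factors in the respective equations, so each is a polynomial in $(S,I,x)$ and hence bounded on the bounded region $\mathcal{S}$. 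Consequently there is a constant $C>0$, independent of $n$, with $LV\le C$ on $\mathcal{S}^\circ$.

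From It\^o's formula and optional stopping (the diffusion part is a true martingale up to $\tau_n$, its coefficients being bounded on the compact set) we obtain $\mathbb{E}\,V(S(t\wedge\tau_n),I(t\wedge\tau_n),x(t\wedge\tau_n))\le V(S(0),I(0),x(0))+C\,t$ for every $t\ge0$. On the event $\{\tau_n\le t\}$ at least one argument of $V$ equals $1/n$ while all terms are positive, so $V$ evaluated at $\tau_n$ is at least $\log n$; therefore $\mathbb{P}(\tau_n\le t)\le (V(S(0),I(0),x(0))+Ct)/\log n\to0$ as $n\to\infty$, which forces $\mathbb{P}(\tau_\infty\le t)=0$ for every $t$ and hence $\tau_\infty=\infty$ a.s. This proves simultaneously that the solution is global and that it stays in $\mathcal{S}^\circ$ for all $t\ge0$, completing the interior case. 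The main obstacle is establishing that $LV$ is bounded above by a constant rather than merely of the form $C(1+V)$, and this hinges on the two favorable structural facts that $S+I$ evolves deterministically with inward drift on $\{S+I=1\}$ and that the transmission noise coefficient $\sigma_1 S I$ degenerates on the faces $S=0$ and $I=0$; once these observations are in place, the standard estimate closes the argument.
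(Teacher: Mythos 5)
Your interior argument is sound and the generator computations check out: $L(-\log S)\le \beta+\mu+\tfrac12\sigma_1^2$ because the singular term $-\mu(1-x)/S$ is nonpositive, $L(-\log(1-S-I))=\mu-\frac{\mu x+\gamma I}{1-S-I}\le \mu$ because the transmission noise cancels in $d(S+I)$, and the remaining terms are polynomial since $I$ and $x(1-x)$ factor out of their respective equations; the Chebyshev estimate against $\log n$ then gives non-explosion and invariance for initial data interior to $\mathcal{S}$. Note that this is a genuinely different route from the paper's proof, which uses no Lyapunov function at all: there, $0\le x\le 1$ is inherited from the Fudenberg--Harris construction of the stochastic replicator equation, and, because the noise cancels, $S+I$ satisfies the pathwise ODE $\frac{d(S+I)}{dt}=\mu(1-x(t))-\mu(S+I)-\gamma I$, so two elementary differential inequalities give $0\le S+I\le 1$ for \emph{every} initial state in $\mathcal{S}$, boundary included.

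The genuine gap is in your boundary case analysis. You claim that the initial states not covered by the interior argument ``lie on the faces $x=0$, $x=1$, or $i=0$.'' That is false: $\mathcal{S}$ also contains initial states with $S(0)=0$, such as $(0,\tfrac12,\tfrac12)$, and with $S(0)+I(0)=1$, such as $(\tfrac12,\tfrac12,\tfrac12)$, and these lie on none of your three faces. For such states your argument breaks down twice over: the barrier function $V$ is infinite at $t=0$, so the localization cannot even start, and, unlike $\{I=0\}$, $\{x=0\}$, $\{x=1\}$, the faces $\{S=0\}$ and $\{S+I=1\}$ are \emph{not} invariant (at $S=0$ the drift of $S$ equals $\mu(1-x)>0$ for $x<1$, pushing the solution off the face), so there is no lower-dimensional reduced system to which ``the identical argument applies.'' To cover these states you need a different device, and the paper's is the natural one: the pathwise ODE for $S+I$ yields $0\le S+I\le 1$ from any starting point in $\mathcal{S}$, while $I\ge 0$ and $S\ge 0$ follow by viewing their equations as linear SDEs ($I$ is a stochastic exponential, hence nonnegative; $S$ has nonnegative forcing $\mu(1-x)$, hence is nonnegative by variation of constants). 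Once invariance of the bounded set $\mathcal{S}$ is known, explosion is impossible and local Lipschitz continuity of the coefficients gives uniqueness, so the Lyapunov machinery, while correct where it applies, is not the part of the proof doing the essential work.
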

\begin{proof}
All of the relationships in this theorem are true point-wise. By construction of the stochastic replicator equation $0\leq x(t)\leq 1$ for all $t>0$ if $0\leq x(0)\leq 1$. Also, $d(S+I+R)=0$, thus if $0\leq S(0)+I(0)\leq 1$ and 
\begin{eqnarray*}
\dfrac{d (S+I)(t)}{dt}&=&\mu(1-x(t))-\mu(S+I)(t)-\gamma I(t) \\
&\leq& \mu(1-x(t))-\mu(S+I)(t) \\
&\leq& \mu -\mu(S+I)(t)
\end{eqnarray*}
for all $t>0$. Thus, for any $t>0$,
$$
S(t)+I(t)\leq (S(0)+I(0))\, e^{-\mu t}+1-e^{-\mu t}\leq 1.
$$
On the other hand, 
\begin{align*}
\dfrac{d (S+I)(t)}{dt}&=\mu(1-x(t))-\mu(S+I)(t)-\gamma I(t) \\
&\geq \mu(1-x(t))-(\mu+\gamma)(S+I)(t) \\
&\geq -(\mu+\gamma)(S+I)(t)
\end{align*}
for all $t>0$. Thus, for any $t>0$,
$$
S(t)+I(t)\geq (S(0)+I(0))\, e^{-(\mu+\gamma) t}\geq 0.
$$
\end{proof}

The processes $S$, $I$, and $x$ are positive bounded processes. Moreover, they have bounded limits are shown in the following proposition. 
\begin{proposition}\label{prop1}
If the initial state of the model \eqref{SIR} is such that $(S(0),I(0),x(0))\in \mathcal{S}$, then the following inequality always holds true.    
 \begin{equation}\label{liminfsup_inequality}
     1-x^* - \frac{\gamma}{\mu} I^* \leq S_*+I_* \leq  S^* + I^*\leq 1-x_* - \frac{\gamma}{\mu} I_* \quad a.s.
 \end{equation}

\end{proposition}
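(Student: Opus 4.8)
The starting point is the observation that, although both the $S$ and $I$ equations in \eqref{SIR} carry the transmission noise $\sigma_1 S I\,\dot W_1$, these terms enter with opposite signs and cancel when the two equations are added. Writing $Z(t)=S(t)+I(t)$, the martingale part disappears and $Z$ solves the pathwise (noise-free) linear ODE
\begin{equation*}
\frac{dZ}{dt}=\mu\,(1-x(t))-\mu\,Z(t)-\gamma\,I(t),
\end{equation*}
which holds along every sample path of the full-probability event on which the solution exists and stays in $\mathcal{S}$ (Lemma \ref{invar}). Thus the whole argument becomes deterministic once a realization of $x(t)$ and $I(t)$ is fixed, and no martingale estimates are needed.

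The plan is then to solve this ODE via the integrating factor $e^{\mu t}$, giving
\begin{equation*}
Z(t)=Z(0)e^{-\mu t}+e^{-\mu t}\int_0^t e^{\mu s}\bigl[\mu\,(1-x(s))-\gamma\,I(s)\bigr]\,ds .
\end{equation*}
The transient term $Z(0)e^{-\mu t}\to 0$, so $\limsup_t Z$ and $\liminf_t Z$ coincide with those of the remaining ratio. I would apply the generalized L'H\^opital rule (Lemma \ref{lHop}) with $f(t)=\int_0^t e^{\mu s}[\mu(1-x(s))-\gamma I(s)]\,ds$ and $g(t)=e^{\mu t}$; since $g'=\mu e^{\mu t}>0$ and $g\to\infty$, the rule applies and the derivative ratio simplifies to $f'(t)/g'(t)=(1-x(t))-\tfrac{\gamma}{\mu}I(t)$. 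Lemma \ref{lHop} then yields
\begin{equation*}
\liminf_{t\to\infty}\Bigl[(1-x(t))-\tfrac{\gamma}{\mu}I(t)\Bigr]\le \liminf_{t\to\infty}Z(t)\le \limsup_{t\to\infty}Z(t)\le \limsup_{t\to\infty}\Bigl[(1-x(t))-\tfrac{\gamma}{\mu}I(t)\Bigr].
\end{equation*}

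Finally I would convert the limits of the combined expression into the stated one-sided bounds using subadditivity of $\limsup$ and superadditivity of $\liminf$. Since $\limsup(1-x)=1-x_*$ and $\limsup(-\tfrac{\gamma}{\mu}I)=-\tfrac{\gamma}{\mu}I_*$ (and $\liminf$ exchanges the roles of starred/substarred quantities), one obtains
\begin{equation*}
\limsup_{t\to\infty}\Bigl[(1-x)-\tfrac{\gamma}{\mu}I\Bigr]\le 1-x_*-\tfrac{\gamma}{\mu}I_*,\qquad \liminf_{t\to\infty}\Bigl[(1-x)-\tfrac{\gamma}{\mu}I\Bigr]\ge 1-x^*-\tfrac{\gamma}{\mu}I^*,
\end{equation*}
which combined with the previous display gives \eqref{liminfsup_inequality}. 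Here I read $S_*+I_*$ and $S^*+I^*$ in the statement as $\liminf_{t\to\infty}(S+I)$ and $\limsup_{t\to\infty}(S+I)$ respectively, for which the middle inequality is immediate. The one delicate point is precisely the direction of sub/superadditivity: it is what forces $x^*,I^*$ (resp.\ $x_*,I_*$) onto the lower (resp.\ upper) side, and applying these inequalities the wrong way would manufacture a spurious, and false, tighter bound; this is the step where I would be most careful.
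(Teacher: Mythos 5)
Your proof is correct and follows essentially the same route as the paper's: cancellation of the transmission noise in $S+I$, the integrating-factor representation of the resulting linear ODE, and the generalized L'H\^opital rule (Lemma \ref{lHop}) to pass to the limits. The only cosmetic difference is that the paper applies the rule separately to the terms $e^{-\mu t}\int_0^t e^{\mu s}x(s)\,ds$ and $e^{-\mu t}\int_0^t e^{\mu s}I(s)\,ds$ and then combines, whereas you apply it once to the combined integrand and then split via sub/superadditivity of $\limsup$ and $\liminf$; both orderings rely on the same elementary inequalities, so the arguments are equivalent.
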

\begin{proof}
Since
\begin{equation} \label{ode:S+I}
\frac{d (S+I)(t)}{dt}=\mu(1-x(t))-\mu(S+I)(t)-\gamma I(t) 
\end{equation}
then
\begin{equation}
   S(t)+I(t)=e^{-\mu t}(S(0)+I(0))+1-e^{-\mu t}\int_0^t  \mu e^{\mu s} x(s) ds -\frac{\gamma}{\mu}e^{-\mu t}\int_0^t  \mu e^{\mu s} I(s) ds.
\end{equation} 

\noindent Inequality \eqref{liminfsup_inequality} follows from using Lemma \ref{invar} and the generalized L'H\^opital rule (see inequality \eqref{gen_LHospital} in the Appendix) applied to the terms $\displaystyle e^{-\mu t}\int_{0}^{t} e^{\mu s}x(s)ds$ and $\displaystyle e^{-\mu t}\int_{0}^{t} e^{\mu s}I(s)ds$.

\end{proof}

\subsection{More simulations.}\label{more_sim}

\begin{figure}[H]
    \centering
     \subfigure[]{\includegraphics[width=8 cm]{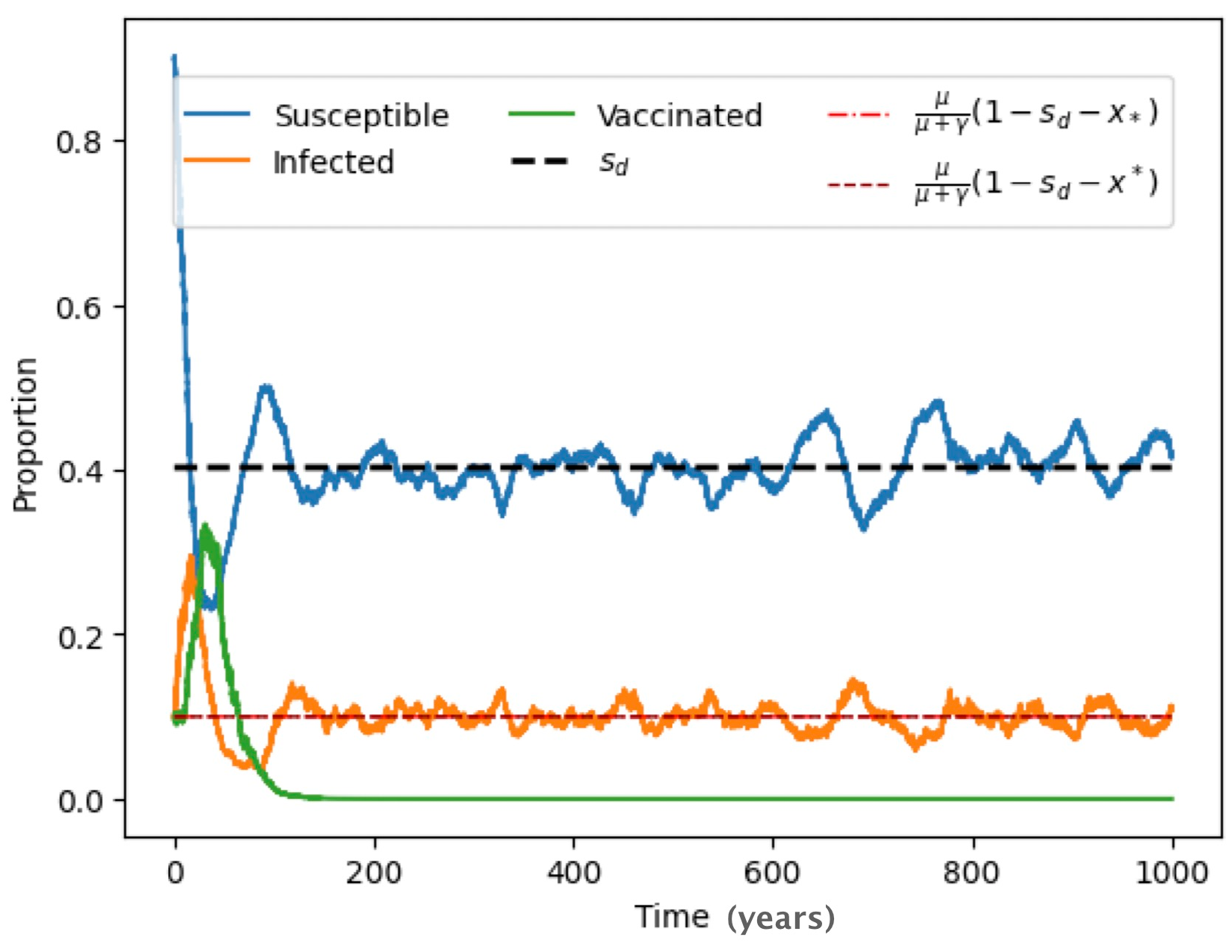}} 
     \subfigure[]{\includegraphics[width=8 cm]{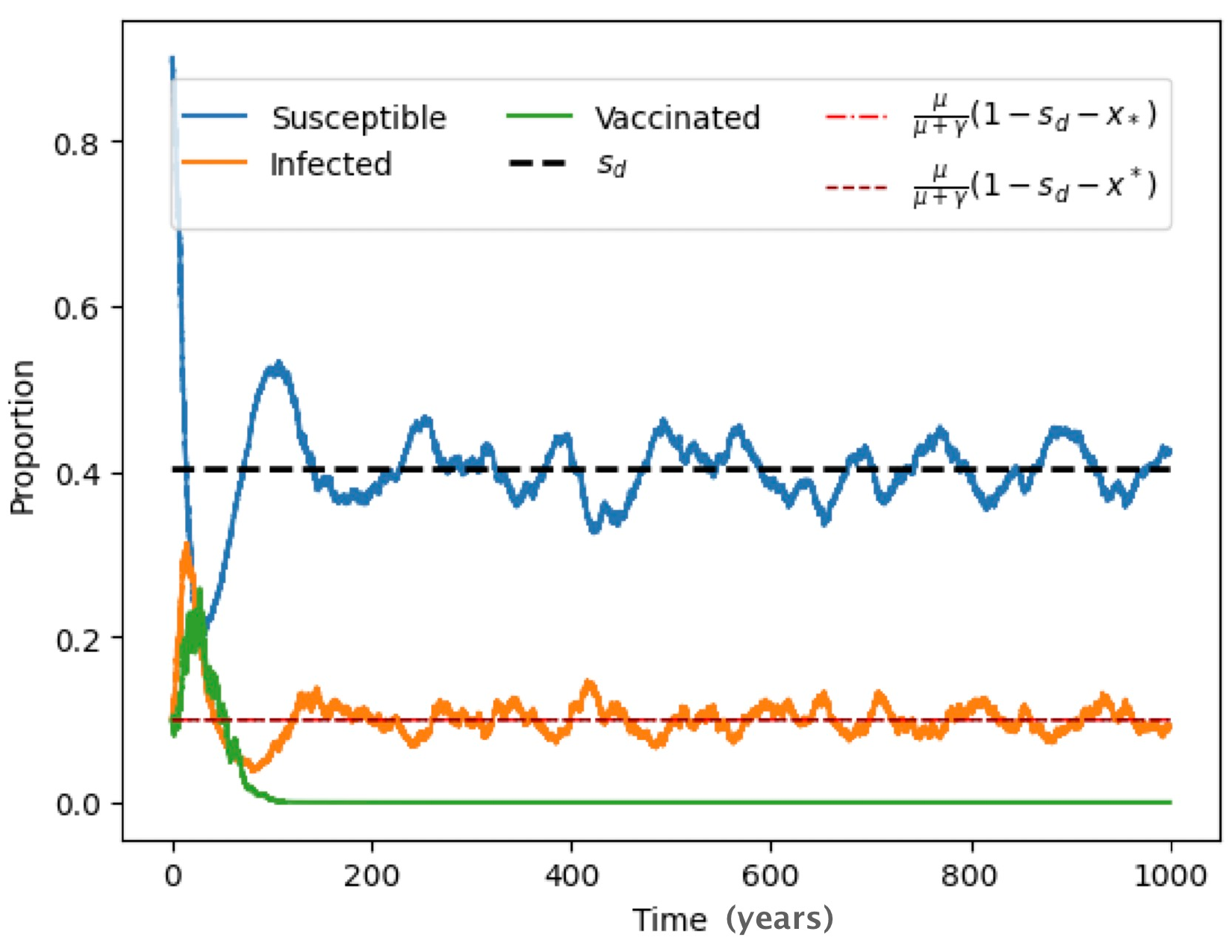}}
     \subfigure[]{\includegraphics[width=8 cm]{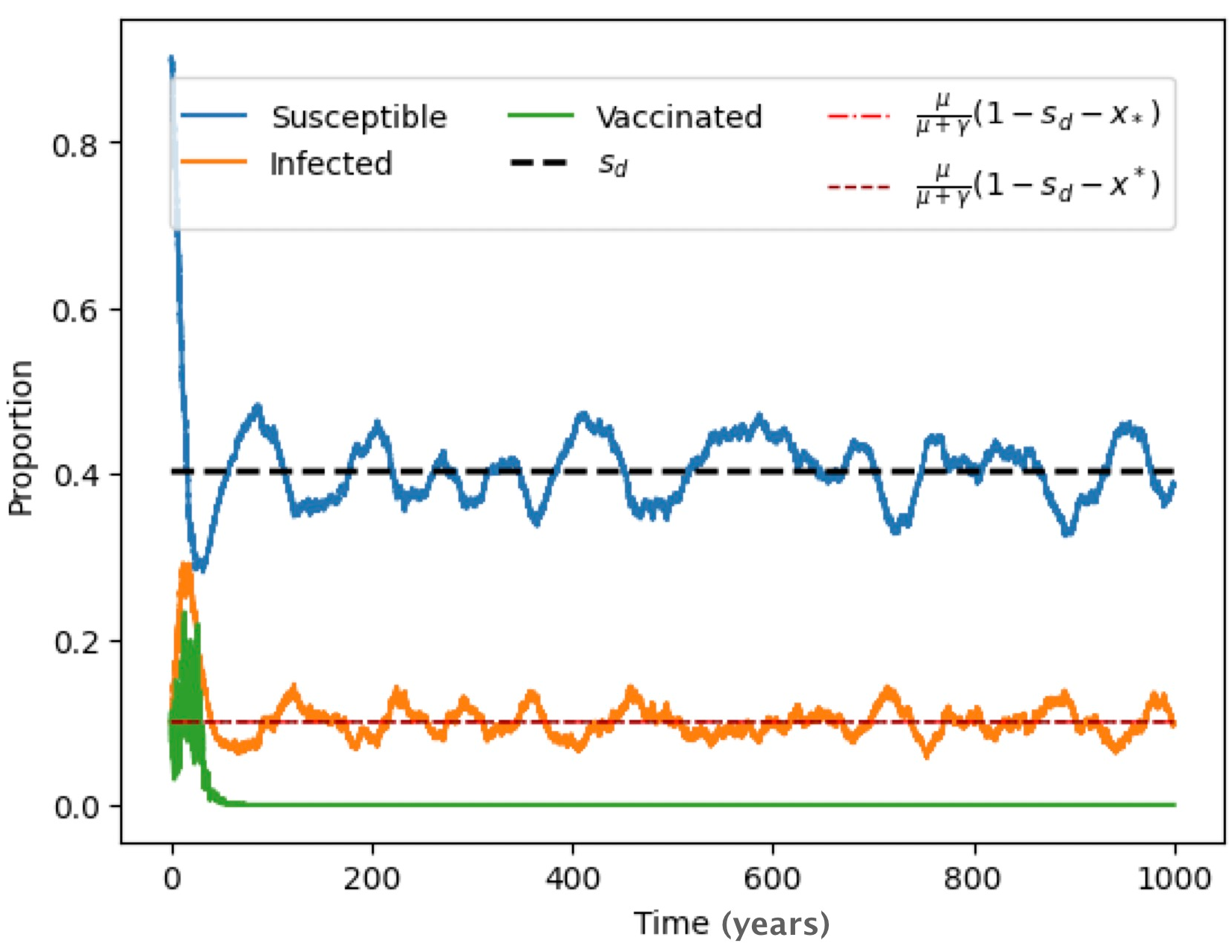}}
    \caption{Simulation of the susceptible, infected, and vaccinated with the limits in Theorem \ref{thm24} with $\sigma_2^2=.01$ in (a), $\sigma_2^2=.04$ in (b), and $\sigma_2^2=1$ in (c). The disease endemic equilibrium $\mathcal{E}_4$ type is stable in which case $R_0^s=2.4$. The rest of the parameter values are $\beta = .3$,  $\mu = 1/50$, $\gamma = .1$, $\kappa = .5$,  $\omega = .1$, $\delta = .1$, $\sigma_1^2=.01$, and $\sigma_3^2=.01$.}
    \label{fig:Sfig4}
\end{figure}

\subsection{Stochastic Optimal Control Problem to Achieve Vaccine Acceptance}\label{optimal}
For convenience, we define the vector $
y(t) = [S(t), I(t), x(t)]^\top$. The system can be re-written as the SDE vector form 
$$dy(t) = f(y(t), u(t)) dt + g(y(t)) dW(t),
$$ with initial condition $y(0) = [S(0), I(0), x(0)]^\top = y_0$ where the functions $ f $ and $ g $ are vectors with the following components
\begin{equation*}
    \begin{aligned}
f_1(y(t), u(t)) &= \mu (1-x) - \beta S I - \mu S, \\
f_2(y(t), u(t)) &= \beta S I - (\mu + \gamma) I, \\
f_3(y(t), u(t)) &= \kappa x (1-x) \left[ -\omega \, (1-u) + I + \delta (2x-1) + \kappa \left( \sigma_3^2 - (\sigma_2^2 + \sigma_3^2) x \right) \right],
\end{aligned}
\end{equation*}
and
\begin{equation*}
\begin{aligned}
g_1(y(t)) &= -\sigma_1 S I, \\
g_2(y(t)) &= \sigma_1 S I, \\
g_3(y(t)) &= \kappa \sqrt{\sigma_2^2 + \sigma_3^2} \, x (1-x),
\end{aligned}
\end{equation*}
respectively. We consider the quadratic cost functional:
$$
J(u) = - \mathbb{E} \left[ \int_0^{T_f} \left( \alpha_1 S(t) + \alpha_2 I(t) + \frac{\alpha_3}{2} u^2(t) \right) dt \right].
$$
for some constants $ \alpha_1, \alpha_2,\alpha_3 \geq 0$. The goal is to find an optimal control $ u^*(t) $ such that $J(u) \leq J(u^*)$, for all $u \in U$, where the set of admissible controls is $U = \{u(t): u(t) \in [0, u_{\max}], \text{ for all } t \in (0, T_f] \}$ for some $0 \leq u_{\max}< 1$, and $T_f$ is the termination time.

To use the stochastic maximum principle, we define Hamiltonian $ H(y, u, p, q) $ by
$$
H(y, u, p, q) = -l(y, u) + \langle f(y, u), p \rangle + \langle g(y), q \rangle
$$
where $ \langle \bf{u},\bf{v} \rangle =\sum_{i=1}^n u_i v_i $ is the Euclidean inner product and $l(y, u)=\alpha_1 S + \alpha_2 I + \frac{\alpha_3}{2} u^2$, while $ p = [p_1, p_2, p_3]^\top $ is the adjoint vector of costates and $ q = [q_1, q_2, q_3]^\top $ are costates volatilities. Hence, the associated Hamiltonian is given by
\begin{equation}
\begin{aligned}
H &= -\alpha_1 S - \alpha_2 I - \frac{\alpha_3}{2} u^2 \\
&\quad + p_1 \left( \mu (1-x) - \beta S I - \mu S \right) + p_2 \left( \beta S I - (\mu + \gamma) I \right) \\
&\quad + p_3 \left( \kappa x (1-x) \left[ -\omega \, (1-u) + I + \delta (2x-1) + \kappa \left( \sigma_3^2 - (\sigma_2^2 + \sigma_3^2) x \right) \right] \right) \\
&\quad + q_1 \left( -\sigma_1 S I \right) + q_2 \left( \sigma_1 S I \right)+ q_3 \left(\kappa \sqrt{\sigma_2^2 + \sigma_3^2}\, x (1-x)\right).
\end{aligned}
\end{equation}
According to the stochastic maximum principle, 
$$H(y^*, u^*, p, q)=\max_{u\in U} H(y^*, u, p, q)$$
where the optimal states satisfy
$$
dy^*(t) = \frac{\partial H(y^*, u^*, p, q)}{\partial p} dt + g(y^*(t)) dW(t),
$$
with the backward stochastic differential equations of the costates
$$
dp(t) = -\frac{\partial H(y^*, u^*, p, q)}{\partial y} dt + q(t) dW(t),
$$
where $u^*$ solves
$$
\frac{\partial H(y^*, u, p, q)}{\partial u}=0,
$$
and exists since $H$ is concave in $u$. Thus, the optimal control is given by
\begin{equation}\label{ustar}
  u^* = \max \left\{ 0,\min \left\{  \frac{\kappa  \omega}{\alpha_3}p_3 x^* (1-x^*) ,u_{\max} \right\} \right\}.
\end{equation}
The function $p_3(t)$ is found by solving the system of backward stochastic differential equations (BSDEs) given by
\begin{align}
dp_1(t) &= \left[ \alpha_1 + p_1 \left( \beta I + \mu - {\color{black}{2}}\sigma_1^2 S I^2 \right) -  p_2 \left(\beta I+{\color{black}{2}} \sigma_1^2 S I^2\right)  \right] dt  -\sigma_1 p_1 S I dW_1(t), \\
dp_2(t) &= \left[ \alpha_2 +  p_1  (\beta S-{\color{black}{2}} \sigma_1^2 S^2 I) - p_2 \left( \beta S - (\mu + \gamma)+{\color{black}{2}} \sigma_1^2 S^2 I \right) -\kappa p_3  x (1-x)  \right] dt + \sigma_1 p_2 S I dW_1(t), \\
dp_3(t) &= \left[  \mu p_1  - \kappa p_3 \left(  \left( 1-2x \right) \left( -\omega \, (1-u) + I + \delta (2x-1) + \kappa \left( \sigma_3^2 - (\sigma_2^2 + \sigma_3^2) x \right) \right) \right.\right. \nonumber \\
&\quad \left. \left. +  \left( 2 \delta - \kappa (\sigma_2^2 + \sigma_3^2) \right)  x (1-x) +  {\color{black}{2}} \kappa (\sigma_2^2 + \sigma_3^2) x (1-x) \left( 1-2x \right) \right)\right] dt \nonumber \\ & \quad  + \kappa \sqrt{\sigma_2^2 + \sigma_3^2} p_3 x (1-x) dW_2(t),
\end{align}
such that $p_i(T_f)=0$ for $i=1,2,3$. While $x^*(t)$ solves Equation \eqref{SIR3.1} at $u^*(t)$ given by \eqref{ustar}. Including the reduction of susceptible in the objective function results in a longer reduction in the vaccination cost than when excluded from the objective.

\end{document}